\newcommand{\C}{\mathbb C}
\newcommand{\D}{\mathbb D}
\newcommand{\N}{\mathbb N}
\newcommand{\OO}{\mathbb O}
\newcommand{\R}{\mathbb R}
\newcommand{\Z}{\mathbb Z}
\renewcommand{\P}{\mathbb P}
\newcommand{\cF}{\mathcal F}
\newcommand{\cH}{\mathcal H}
\newcommand{\cI}{\mathcal I}
\newcommand{\cM}{\mathcal M}
\newcommand{\pa}{\partial}
\newcommand{\ex}{\mathbf{e}}
\renewcommand{\a}{\alpha} 
\renewcommand{\b}{\beta} 
\newcommand{\G}{\varGamma}
\newcommand{\De}{\mathit{\Delta}}
\newcommand{\de}{\delta}
\newcommand{\e}{\varepsilon}
\renewcommand{\l}{\lambda}
\newcommand{\z}{\zeta}
\newcommand{\w}{\omega}
\newcommand{\la}{\langle}
\newcommand{\ra}{\rangle}
\newcommand{\tr}{\;^t}
\newcommand{\diag}{\mathrm{diag}}
\newcommand{\one}{\mathbf{1}}
\newcommand{\rank}{\mathrm{rank}}
\newcommand{\reg}{\mathrm{reg}}
\newcommand{\init}{\mathrm{in}}
\newcommand{\sing}{\mathrm{Sing}}
\theoremstyle{plain}
\newtheorem{theorem}{Theorem}[section]
\newtheorem{proposition}{Proposition}[section]
\newtheorem{lemma}{Lemma}[section]
\newtheorem{cor}{Corollary}[section]
\theoremstyle{definition}
\newtheorem{remark}{Remark}[section]
\newcommand{\HGF}[5]
{{}_{#1}F_{#2}\left(\begin{matrix}#3\\#4\end{matrix};#5\right)}
\newcommand{\hge}[2]
{\cF\left(\begin{matrix}#1\\#2\end{matrix}\right)}
\newcommand{\hgex}[3]
{\cF\left(\begin{matrix}#1\\#2\end{matrix};#3\right)}
\newcommand{\hgf}[3]
{F\left(\begin{matrix}#1\\#2\end{matrix};#3\right)}
\title
[A hypergeometric system of rank $9$]
{A system of hypergeometric differential equations in two variables 
 of rank $9$}
\author{Jyoichi Kaneko}
\address[Kaneko]{
   Department of Mathematical Sciences,
   University of the Ryukyus,
   Nishihara, Okinawa, 903-0213, Japan
}
\email{kaneko@math.u-ryukyu.ac.jp}
\author{Keiji Matsumoto}
\address[Matsumoto]{
Department of Mathematics\\
Hokkaido University\\
Sapporo 060-0810, Japan
}
\email{matsu@math.sci.hokudai.ac.jp}
\author{Katsuyoshi Ohara}
\address[Ohara]{
Faculty of Mathematics and Physics\\
Kanazawa University\\
Kanazawa 920-1192, Japan\\
}
\email{ohara@se.kanazawa-u.ac.jp
}
\keywords{Hyperegeometric functions, Monodromy representation}
\subjclass[2010]{33C70, 32S40}
\date{\today}
\begin{document}
\maketitle
\begin{abstract}
We study a hypergeometric function in two variables and 
a system of hypergeometric differential equations 
associated with this function. This is a regular holonomic system of rank $9$. 
We give a fundamental system of solutions to this system 
in terms of this hypergeometric series.
We give circuit matrices along generators of 
the fundamental group of the complement of its singular locus
with respect to our fundamental system. 
\end{abstract}

\section{Introduction}
There are several generalizations of the original hypergeometric 
function $_2F_1$ and differential equation. For examples, 
we have the generalized hypergeometric function $_pF_{p-1}$, 
Appell's functions $F_1,\dots, F_4$, Lauricella's functions $F_A,\dots,F_D$, 
and the differential equations associated with them.  

In this paper, we study a hypergeometric function 
$\hgf{a}{B}{x}$ in two variables $x_1$ and $x_2$  
with parameters $a=(a_1,a_2,a_3)$, 
$B=\begin{pmatrix} b_1,b_2,1\\ b_3,b_4,1\end{pmatrix}$ defined in 
(\ref{eq:HGF}), 
which is one of generalizations of the hypergeometric series 
introduced by Kamp\'e de F\'eriet as mentioned in \cite{E}[\S1.5]. 
Our function can be  regarded as an extension of $_3F_2\left(
\begin{matrix} a_1,a_2,a_3\\ b_1,b_2\end{matrix};x\right)$ 
just like Appell's 
$F_4\left( \begin{matrix} a_1,a_2 \\ b_1, b_2  \end{matrix};x_1,x_2\right)$ 
as that of the original 
$_2F_1\left(\begin{matrix} a_1,a_2\\ b_1\end{matrix};x\right)$.

We give a system $\hge{a}{B}$ of differential equations satisfied by 
our function in Proposition \ref{prop:HGDE}. 
Our function and this system admit the $S_3\times D_4$-symmetry 
with respect to parameter and variable changes, 
where $S_3$ is the symmetric group of degree $3$ and 
$D_4$ is the dihedral group of order $8$. This symmetry helps us 
to find solutions and their integral representations, 
and to check some formulas.  
We show in Theorem \ref{th:rank} that 
the system $\hge{a}{B}$ is holonomic of rank $9$ and that  
its singular locus $S$ consists of the coordinate axes and 
a nodal cubic curve $R(x)=0$ triply tangent to them.
Under a $(\Z/(3\Z))^2$ covering map, 
the pull back of $S$ is decomposed into twelve lines 
in the complex projective plane $\P^2$.
These lines together with nine intersection points of them
form the Hesse configuration, in which lines and 
points satisfies that three points per line and four lines through each point.
In other words, we also have a holonomic system of rank $9$ 
defined on the complement of the Hesse configuration. 

We investigate in \cite{KMO1} the structure of 
the fundamental group $\pi_1(X,\dot x)$,  
where $X$ is the complement of the singular locus $S$.
We show that it is generated by three loops $\rho_1,\rho_2$ and 
$\rho_3$ turning around $x_1=0$, $x_2=0$ and $R(x)=0$, respectively, 
and that its is isomorphic to a group generated by three elements 
with four relations among them. 
To study the monodromy representation of $\hge{a}{B}$ in this paper, 
we use the generating loops $\rho_1,\rho_2$ and $\rho_3$ and 
three relations among them, 
which characterize an Artin group of infinite type.  

We construct a fundamental system of solutions to the system $\hge{a}{B}$ 
in Theorem \ref{th:fund-sols} by using the series $\hgf{a'}{B'}{x}$ with 
different parameters $a',B'$. 
We give their integral representations of 
Euler type in \S\ref{sec:Int-rep} by following results in \S4 in \cite{G1}. 
We consider the monodromy representation of our system in 
\S\ref{sec:Monodromy}. 
The problem is the computation of the circuit matrix 
along the loop $\rho_3$. 
We show that it has an eigenvector $v$ of eigenvalue $\lambda(\ne1)$ 
and the $8$-dimensional eigenspace of eigenvalue $1$ 
under some non-integral conditions on the parameters.
We express it as a reflection of root $v$  
with respect to an indeterminate form $H$. 
By normalizing $v$ to $(1,\dots,1)$ and restricting the 
system $\hge{a}{B}$ to $x_i=0$ ($i=1,2$), we determine the eigenvalue $\l$ 
and the form $H$ from the monodromy representation of 
the generalized hypergeometric differential equation $_3\cF_2$ 
as the method for Appell's system $\cF_C$ in \cite{M2}.
Note also that the system $\hge{a}{B}$ is regular singular 
by the behavior of its solutions around each component of 
the singular locus $S$.

Finally, we compute intersection numbers of twisted cycles given in 
\S\ref{sec:Int-rep}.
By these computations, we can conclude that 
the fundamental system of solutions used in the monodromy representation 
is given by the integral representations without Gamma factors.
As in \cite{G2}, \cite{GM}, \cite{M1} and \cite{MY} for Appell-Lauricella's 
systems, we express in Theorem \ref{th:monod-int} 
the circuit transforms in terms of intersection form 
independent of the choice of fundamental systems of solutions.  
It is studied in \cite{KMO1} that  
the monodromy representation of $\hge{a}{B}$ is irreducible 
under some non-integral conditions of parameters.

There is a further generalization 
$F_p^m\left(\begin{matrix} a\\B \end{matrix};x_1,\dots,x_m\right)$,  
which can be regarded as an $m$-variable version of $_pF_{p-1}$. 
This function satisfies 
a system $\cF_p^m\left(\begin{matrix} a\\B \end{matrix}\right)$ 
of differential equations of rank $p^m$. 
We study this function and system in the forthcoming paper \cite{KMO2}.

\section{A system of hypergeometric differential equations}
\label{sec:HGDE}
We define a hypergeometric series of two variables $x_1$ and $x_2$ 
with parameters $a=(a_1,a_2,a_3)$, 
$B= \begin{pmatrix} B_1\\ B_2 \end{pmatrix}=\begin{pmatrix}
b_{11},b_{12},b_{13}\\ b_{21},b_{22},b_{23}\end{pmatrix} 
=\begin{pmatrix}b_1,b_2,1\\ b_3,b_4,1\end{pmatrix}$ 
as 
\begin{align}
\label{eq:HGF}
& \hgf{a}{B}{x}=\hgf{a_1,a_2,a_3}{\begin{matrix} 
b_{11},b_{12},b_{13}\\b_{21},b_{22},b_{23}
\end{matrix}}
{x_1,x_2}
=\sum_{n\in \N^2} \prod_{j=1}^3 \dfrac{(a_j,n_1+n_2)}
{\prod_{i=1}^2(b_{ij},n_i)}x^n
\\
\nonumber
&=\sum_{n_1,n_2=0}^\infty
\frac{(a_1,n_1+n_2)(a_2,n_1+n_2)(a_3,n_1+n_2)}
{(b_1,n_1)(b_2,n_1)(1,n_1)(b_3,n_2)(b_4,n_2)(1,n_2)}x_1^{n_1}x_2^{n_2},
\end{align}
where $b_1,\dots,b_4\notin -\N=\{0,-1,-2,\dots\}$.
Note that this series reduces to 
$\HGF{3}{2}{a_1,a_2,a_3}{b_1,b_2}{x_1}$ and to 
$\HGF{3}{2}{a_1,a_2,a_3}{b_3,b_4}{x_2}$
when it is restricted to $x_2=0$ and to $x_1=0$, respectively. 
The following proposition is a direct consequence 
of the formula (29) in \S 1.3 in \cite{SK}.

\begin{proposition}
\label{prop:converge}
If $x$ belongs to the domain 
$$\D=\{(x_1,x_2)\in \C^2\mid\sqrt[3]{|x_1|}+\sqrt[3]{|x_2|}<1\},$$
then the hypergeometric series (\ref{eq:HGF}) 
absolutely converges. 
If $x$ does not belongs to the closure of $\D$, then it diverges. 
\end{proposition}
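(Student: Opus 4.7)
The plan is to reduce the claim to the standard convergence criterion for double hypergeometric series (the formula cited from \cite{SK}), by extracting the asymptotic growth of the coefficient
\[
c_{n_1,n_2}=\prod_{j=1}^3\frac{(a_j,n_1+n_2)}{(b_{1j},n_1)(b_{2j},n_2)}
\]
and then optimizing over directions $(n_1,n_2)\to\infty$.

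First I would apply Stirling's formula (or the equivalent asymptotic $(c,k)=\Gamma(c+k)/\Gamma(c)\sim k^{c-1}\,k!/\Gamma(c)$) term by term. This gives
\[
|c_{n_1,n_2}|\sim C\cdot P(n_1,n_2)\cdot\left(\frac{(n_1+n_2)!}{n_1!\,n_2!}\right)^{3}
=C\cdot P(n_1,n_2)\cdot\binom{n_1+n_2}{n_1}^{3},
\]
where $P(n_1,n_2)$ is a factor growing only polynomially in $n_1,n_2$ and $C>0$ is a constant depending on $a,B$. The three Pochhammer symbols in the numerator contribute $((n_1+n_2)!)^3$ (times a polynomial power), and the two sets of three Pochhammer symbols in the denominator contribute $(n_1!)^3(n_2!)^3$ (again up to a polynomial factor).

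Next I would analyze the exponential rate of the general term along rays. Setting $n=n_1+n_2$ and $t=n_1/n\in[0,1]$, Stirling gives
\[
\binom{n}{tn}^{3/n}\longrightarrow \frac{1}{t^{3t}(1-t)^{3(1-t)}}\qquad(n\to\infty),
\]
so the polynomial factor $P$ is irrelevant to $\limsup|c_{n_1,n_2}x_1^{n_1}x_2^{n_2}|^{1/n}$, and the general term behaves like
\[
\left(\frac{|x_1|^{t}|x_2|^{1-t}}{t^{3t}(1-t)^{3(1-t)}}\right)^{n}.
\]
A Lagrange-type computation (take the logarithm, differentiate in $t$) shows that the maximum over $t\in[0,1]$ of the base is attained at $t^{*}=|x_1|^{1/3}/(|x_1|^{1/3}+|x_2|^{1/3})$ and equals $\bigl(|x_1|^{1/3}+|x_2|^{1/3}\bigr)^{3}$.

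Finally I would conclude in two directions. If $(x_1,x_2)\in\D$, then this maximum is strictly less than $1$, hence for every ray the terms decay geometrically and an elementary comparison with a convergent majorant gives absolute convergence uniformly on compact subsets of $\D$. If $(x_1,x_2)\notin\overline{\D}$, the maximum exceeds $1$; choosing $(n_1,n_2)\to\infty$ along the optimal ratio $t^{*}$ makes the general term blow up, so the series diverges. The main obstacle is the direction-optimization step, since the radius of convergence is not a polydisc but the cusped region $\sqrt[3]{|x_1|}+\sqrt[3]{|x_2|}<1$; once the asymptotic reduction to $\binom{n_1+n_2}{n_1}^3$ is in hand, this step is exactly what the cited formula (29) of \cite[\S1.3]{SK} packages, and it is the reason the proposition is stated as a direct consequence.
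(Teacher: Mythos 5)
Your argument is correct, and it is exactly the Horn-type analysis that the paper's one-line proof delegates to formula (29) of \S1.3 in \cite{SK}: the paper gives no independent argument, merely the citation, and your Stirling reduction to $\binom{n_1+n_2}{n_1}^{3}$ followed by the optimization over rays (with maximum $(\sqrt[3]{|x_1|}+\sqrt[3]{|x_2|})^{3}$ attained at $t^{*}=|x_1|^{1/3}/(|x_1|^{1/3}+|x_2|^{1/3})$) is precisely what that formula packages. The only point worth making explicit is that for absolute convergence you should sum over $n=n_1+n_2$ and bound the $n+1$ terms of each diagonal uniformly by the maximum over $t\in[0,1]$, which your ``convergent majorant'' remark covers.
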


\begin{proposition}
\label{prop:F-symmetry}
The hypergeometric series (\ref{eq:HGF}) admits the symmetry:
$$
\hgf{a}{B}{x_1,x_2}
=
F\left(\begin{matrix}
a_{\sigma(1)},&a_{\sigma(2)},&a_{\sigma(3)}\\
b_{\tau(1),\sigma_{1}(1)},&b_{\tau(1),\sigma_{1}(2)},& 1\\
b_{\tau(2),\sigma_{2}(1)},&b_{\tau(2),\sigma_{2}(2)},& 1
\end{matrix}\ ;{x_{\tau(1)},x_{\tau(2)}}\right),
$$
where $\sigma$ belongs to the symmetric group $S_3$ of degree $3$, 
$\sigma_1,\sigma_2,\tau$ belong to the symmetric group $S_2$ of degree $2$. 
This symmetry is isomorphic to the direct product $S_3\times D_4$ 
of $S_3$ and the dihedral group $D_4$ of order $8$.
\end{proposition}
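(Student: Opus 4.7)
The strategy is to verify the invariance of the series (\ref{eq:HGF}) termwise under each of three generating operations and then identify the group so generated. Since the series converges on the nonempty open set $\D$ of Proposition \ref{prop:converge}, two series with identical coefficients define the same analytic function, so it suffices to match coefficients of $x_1^{n_1}x_2^{n_2}$ after a relabeling of summation indices.

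First, permuting $(a_1,a_2,a_3)$ by any $\sigma\in S_3$ fixes the summand, because the numerator $\prod_{j=1}^3(a_j,n_1+n_2)$ is totally symmetric in the $a_j$. Second, each $\sigma_i\in S_2$ swaps $b_{i1}$ and $b_{i2}$ while leaving $b_{i3}=1$ fixed; this is a symmetry because the denominator contribution from row $i$ is $(b_{i1},n_i)(b_{i2},n_i)(1,n_i)$, which is symmetric in $b_{i1}$ and $b_{i2}$. Third, for the nontrivial row-swap $\tau\in S_2$ coupled with the swap of $x_1$ and $x_2$, I substitute $(m_1,m_2)=(n_2,n_1)$ into the right-hand side: the Pochhammer symbols $(a_j,n_1+n_2)$ become $(a_j,m_1+m_2)$, the row-swapped denominator $\prod_{i,j}(b_{\tau(i),j},n_i)$ becomes $\prod_{i,j}(b_{ij},m_i)$, and $x_{\tau(1)}^{n_1}x_{\tau(2)}^{n_2}=x_2^{n_1}x_1^{n_2}$ becomes $x_1^{m_1}x_2^{m_2}$, recovering the original series.

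Since the $S_3$-action on $(a_1,a_2,a_3)$ and the action of $\sigma_1,\sigma_2,\tau$ on $(B,x)$ operate on disjoint data, the two actions commute and the total symmetry group splits as a direct product. The second factor is generated by two commuting involutions $\sigma_1,\sigma_2$ (acting independently within the two rows of $B$) together with the involution $\tau$ that swaps the rows and thereby conjugates $\sigma_1$ to $\sigma_2$; this is precisely the wreath product $S_2\wr S_2$, which is isomorphic to $D_4$ and has order $8$.

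The only step that warrants care is this last identification: one must check that the generators $\sigma_1,\sigma_2,\tau$ act faithfully (so the group is not smaller than $D_4$) and that they satisfy no further relations beyond those of $S_2\wr S_2$ (so the group is not larger). For generic parameters this is immediate from the faithful action on the four-entry block $\{b_1,b_2,b_3,b_4\}$ arranged as a $2\times 2$ grid with its row partition preserved up to a global swap, giving the full symmetry group $S_3\times D_4$ of order $48$.
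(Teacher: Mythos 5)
Your proof is correct and follows essentially the same route as the paper: termwise verification of invariance under the generators (symmetry of the numerator in the $a_j$, symmetry of each denominator row in $b_{i1},b_{i2}$, and the index relabeling $(n_1,n_2)\mapsto(n_2,n_1)$ for the row/variable swap), followed by identification of the group generated on $(B,x)$ as a group of order $8$ isomorphic to $D_4$. The only cosmetic difference is that you name this group as the wreath product $S_2\wr S_2$ acting imprimitively on the $2\times2$ grid of $b$'s, whereas the paper exhibits the two non-commuting order-$4$ elements $\sigma_{101}$ and $\sigma_{011}$ explicitly; both correctly pin down $D_4$.
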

\begin{proof}
Since the numerator of the right hand side of (\ref{eq:HGF}) 
is symmetric with respect to the parameters $a_1,a_2,a_3$, 
$\hgf{a}{B}{x}$ is invariant under the action $\sigma \in S_3$.
We see the action on the parameters $b_1,\dots,b_4$ and the variables
$x_1,x_2$. Note that 
$$
\begin{matrix}
\sigma_{100}\cdot (b_1,b_2,b_3,b_4;x_1,x_2)
&=&(b_2,b_1,b_3,b_4;x_1,x_2),\\
\sigma_{010}\cdot (b_1,b_2,b_3,b_4;x_1,x_2)
&=&(b_1,b_2,b_4,b_3;x_1,x_2),\\
\sigma_{001}\cdot (b_1,b_2,b_3,b_4;x_1,x_2)
&=&(b_3,b_4,b_1,b_2;x_2,x_1),
\end{matrix}
$$
for three elements $\sigma_{100}$, $\sigma_{010}$, $\sigma_{001}$ 
of the triple of $S_2$ 
given by 
$(\sigma_1,\sigma_2;\tau)=((12),\mathrm{id};\mathrm{id})$, 
$(\mathrm{id},(12);\mathrm{id})$, $(\mathrm{id},\mathrm{id};(12))$.
By the symmetry of the right hand side of (\ref{eq:HGF}), 
$\hgf{a}{B}{x}$ is also invariant under the actions generated by them. 
We consider the group structure of the triple of $S_2$.
It is clear that $\sigma_{100}$ and $\sigma_{010}$ are commutative. 
By definition, we have 
\begin{align*}
&(\sigma_{100}\sigma_{001})\cdot (b_1,b_2,b_3,b_4;x_1,x_2)=
\sigma_{100}\cdot (b_3,b_4,b_1,b_2;x_1,x_2)\\
=&(b_4,b_3,b_1,b_2;x_2,x_1),\\
&(\sigma_{001}\sigma_{100})\cdot (b_1,b_2,b_3,b_4;x_1,x_2)=
\sigma_{001}\cdot (b_2,b_1,b_3,b_4;x_1,x_2)\\
=&(b_3,b_4,b_2,b_1;x_2,x_1),
\end{align*}
These imply that 
$$\sigma_{101}=((12),\mathrm{id};(12))=\sigma_{100}\sigma_{001}
\ne \sigma_{001}\sigma_{100}=(\mathrm{id},(12);(12))=\sigma_{011}.$$
Since $\sigma_{101}$ and $\sigma_{011}$ are of order $4$, this group 
is isomorphic to the dihedral group $D_4$  of order $8$.
By this action on $b_1,\dots,b_4$, 
we have an inclusion of $D_4$ into the symmetric group $S_4$ 
as in Table \ref{tab:D4S4}, 
where elements of $S_4$ are expressed in terms of cyclic permutations. 
It is easy to see that the action of $S_3$ commutes with that of $D_4$.
\end{proof}
\begin{table}[hbt]
$$
\begin{array}{|c|cccccccc|}
\hline
D_4&\sigma_{000} &\sigma_{100} &\sigma_{010} &\sigma_{110} &
   \sigma_{001} &\sigma_{101} &\sigma_{011} &\sigma_{111} \\
\hline
S_4&\mathrm{id} &(12) &(34) &(12)(34) &
    (13)(24) &(1423) &(1324) &(14)(23) \\
\hline
\end{array}
$$
\caption{Inclusion $D_4\hookrightarrow S_4$}
\label{tab:D4S4}
\end{table}

\begin{proposition}
\label{prop:HGDE}
The function $\hgf{a}{B}{x}$ satisfies hypergeometric differential equations
$$\Big[\prod_{j=1}^3(b_{ij}-1+\theta_i)\Big]\cdot f(x)=
\Big[x_i\prod_{j=1}^3(a_j+\theta_1+\theta_2)\Big]\cdot f(x)\quad (i=1,2),$$
where $f(x)$ is an unknown function, and 
$\theta_i=x_i\pa_i$, $\pa_i=\dfrac{\pa}{\pa x_i}$ $(i=1,2)$.
\end{proposition}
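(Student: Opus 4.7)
The plan is to substitute the series (\ref{eq:HGF}) into both sides of each equation and match coefficients of $x^n=x_1^{n_1}x_2^{n_2}$. By the element $\sigma_{001}$ of Proposition \ref{prop:F-symmetry}, which simultaneously exchanges $(x_1,x_2)$ and the rows $B_1,B_2$, it suffices to treat the $i=1$ equation; the $i=2$ equation then follows from the $i=1$ equation by applying the symmetry.

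Setting $F(x)=\sum_{n\in\N^2}c_n\,x^n$ with
$$c_n=\frac{\prod_{j=1}^3(a_j,n_1+n_2)}{\prod_{i=1}^2\prod_{j=1}^3(b_{ij},n_i)},$$
I would use $\theta_i x^n=n_i x^n$ to rewrite the two sides of the $i=1$ equation as
\begin{align*}
\Big[\prod_{j=1}^3(b_{1j}-1+\theta_1)\Big]F(x)
&=\sum_{n\in\N^2}c_n\prod_{j=1}^3(b_{1j}+n_1-1)\,x^n,\\
x_1\Big[\prod_{j=1}^3(a_j+\theta_1+\theta_2)\Big]F(x)
&=\sum_{\substack{n\in\N^2\\ n_1\geq 1}}c_{n-e_1}\prod_{j=1}^3(a_j+n_1+n_2-1)\,x^n,
\end{align*}
where $e_1=(1,0)$ and the second expression comes from reindexing $n\mapsto n-e_1$ after distributing the $x_1$.

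Matching coefficients of $x^n$ then reduces to the single identity
$$c_n\prod_{j=1}^3(b_{1j}+n_1-1)=c_{n-e_1}\prod_{j=1}^3(a_j+n_1+n_2-1)\qquad(n_1\ge 1),$$
which is a direct application of the Pochhammer recursion $(\alpha,k)=(\alpha+k-1)(\alpha,k-1)$: only the factors $(a_j,n_1+n_2)$ and $(b_{1j},n_1)$ differ between $c_n$ and $c_{n-e_1}$, and their quotient produces exactly the required products. The one step requiring attention is the boundary case $n_1=0$, where the right-hand sum contributes nothing while the left-hand coefficient vanishes because the $j=3$ factor equals $b_{13}-1+n_1=1-1+0=0$; this mild obstacle is precisely where the normalization $b_{13}=b_{23}=1$ built into (\ref{eq:HGF}) plays its role.
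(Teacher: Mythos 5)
Your proposal is correct and follows essentially the same route as the paper: substitute the series, use $\theta_i x^n = n_i x^n$, and match coefficients of $x^n$ via the Pochhammer recursion after reindexing, with the $n_1=0$ boundary term killed by the factor $b_{13}-1+\theta_1=\theta_1$ (the paper handles this by starting its reindexed sum at $n_1=1$, and likewise disposes of the $i=2$ case by symmetry, saying only ``similarly'').
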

\begin{proof}
We write down the differential equations  as
\begin{align}
\label{eq:DE1}
& \theta_1(b_1-1+\theta_1)(b_2-1+\theta_1)f(x)\\
\nonumber
=&x_1(a_1+\theta_1+\theta_2)(a_2+\theta_1+\theta_2)
(a_3+\theta_1+\theta_2)f(x),\\[5mm]
\label{eq:DE2}
& \theta_2(b_3-1+\theta_2)(b_4-1+\theta_2)f(x)\\
\nonumber
=&x_2(a_1+\theta_1+\theta_2)(a_2+\theta_1+\theta_2)
(a_3+\theta_1+\theta_2)f(x). 
\end{align}
Let us show (\ref{eq:DE1}). Since 
\begin{align*}
\theta_1(b_1\!-\!1\!+\!\theta_1)(b_2\!-\!1\!+\!\theta_1)x_1^{n_1}x_2^{n_2}
&=n_1(b_1\!-\!1\!+\!n_1)(b_2\!-\!1\!+\!n_1)x_1^{n_1}x_2^{n_2},\\
x_1\prod_{i=1}^3(a_{i}\!+\!\theta_1\!+\!\theta_2)x_1^{n_1}x_2^{n_2}
&=\prod_{i=1}^3(a_{i}\!+\! n_1\!+\!n_2\!)x_1^{n_1+1}x_2^{n_2},
\end{align*}
we have 
\begin{align*}
&\theta_1(b_1-1+\theta_1)(b_2-1+\theta_1)\hgf{a}{B}{x}\\
=&\sum_{n_1=1,n_2=0}^\infty
\frac{(a_1,n_1+n_2)(a_2,n_1+n_2)(a_3,n_1+n_2)}
{(b_1,n_1\!-\!1)(b_2,n_1\!-\!1)(1,n_1\!-\!1)(b_3,n_2)(b_4,n_2)(1,n_2)}
x_1^{n_1}x_2^{n_2},\\
&x_1\prod_{i=1}^3(a_{i}\!+\!\theta_1\!+\!\theta_2)\hgf{a}{B}{x}\\
=&\sum_{n_1,n_2=0}^\infty
\frac{(a_1,n_1+n_2+1)(a_2,n_1+n_2+1)(a_3,n_1+n_2+1)}
{(b_1,n_1)(b_2,n_1)(1,n_1)(b_3,n_2)(b_4,n_2)(1,n_2)}x_1^{n_1+1}x_2^{n_2}.
\end{align*}
Note that these series coincide. Similarly, we can show (\ref{eq:DE2}).
\end{proof}

We study the system $\hge{a}{B}=\hgex{a}{B}{x}$ generated by 
the differential equations (\ref{eq:DE1}) and (\ref{eq:DE2}).

\begin{proposition}
\label{prop:E-symmetry}
The system $\hge{a}{B}$ admits the $S_3\times D_4$-symmetry:
$$
\hgex{a}{B}{x_1,x_2}
=
\cF\left(\begin{matrix}
a_{\sigma(1)},&a_{\sigma(2)},&a_{\sigma(3)}\\
b_{\tau(1),\sigma_{1}(1)},&b_{\tau(1),\sigma_{1}(2)},&1\\
b_{\tau(2),\sigma_{2}(1)},&b_{\tau(2),\sigma_{2}(2)},&1
\end{matrix}\ ;{x_{\tau(1)},x_{\tau(2)}}\right),
$$
for any $\sigma\in S_3$ and $\sigma_1,\sigma_2,\tau\in S_2$.
\end{proposition}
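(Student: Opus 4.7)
My plan is to reduce the claim to checking invariance on the two explicit generators of the ideal defining $\hge{a}{B}$, namely the operators obtained from equations (\ref{eq:DE1}) and (\ref{eq:DE2}). Since $S_3\times D_4$ is generated by $S_3$ acting on $(a_1,a_2,a_3)$ together with $\sigma_{100},\sigma_{010},\sigma_{001}$, it is enough to verify invariance (or a permutation of generators) under each of these.

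For the $S_3$-factor, both (\ref{eq:DE1}) and (\ref{eq:DE2}) involve the $a$-parameters only through the product $\prod_{j=1}^3(a_j+\theta_1+\theta_2)$, which is manifestly symmetric in $a_1,a_2,a_3$; hence each equation is literally unchanged. For $\sigma_{100}$, the left-hand side of (\ref{eq:DE1}) contains $(b_1-1+\theta_1)(b_2-1+\theta_1)$, which is symmetric in $b_1,b_2$, while (\ref{eq:DE2}) does not involve $b_1,b_2$; thus both operators are fixed. The case of $\sigma_{010}$ is identical with $(b_3,b_4)$ and (\ref{eq:DE2}) playing the symmetric roles.

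The slightly less trivial piece is $\sigma_{001}$, which simultaneously swaps $(b_1,b_2)\leftrightarrow(b_3,b_4)$ and $x_1\leftrightarrow x_2$, hence $\theta_1\leftrightarrow\theta_2$. Under this substitution equation (\ref{eq:DE1}) is carried precisely to (\ref{eq:DE2}) and vice versa, so the ideal generated by the two operators is preserved. Combining these observations, every generator of $S_3\times D_4$ either fixes each of (\ref{eq:DE1}),(\ref{eq:DE2}) or permutes them, so the whole ideal, and hence the system $\hge{a}{B}$, is invariant.

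There is essentially no obstacle here: the proof is an inspection of symmetries of the two polynomial expressions in the commuting operators $\theta_1,\theta_2,x_1,x_2$. Compared with Proposition \ref{prop:F-symmetry}, which concerned a single formal power series, here we only need to observe the symmetries of two differential operators, which is even more transparent; the main thing to be careful about is tracking that $\sigma_{001}$ exchanges the two generators rather than fixing each of them individually.
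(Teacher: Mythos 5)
Your proposal is correct and follows the same route as the paper: check that (\ref{eq:DE1}) and (\ref{eq:DE2}) are each invariant under the $S_3$-action and under $\sigma_{100}$, $\sigma_{010}$, and are exchanged by $\sigma_{001}$, so the generated ideal is preserved. Your version just spells out the symmetry of the relevant operator factors in slightly more detail.
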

\begin{proof}
It is easy to see that the differential equations (\ref{eq:DE1}) and 
(\ref{eq:DE2}) are invariant under the action of $S_3$.
They are also invariant under the actions of $\sigma_{100}$ 
and $\sigma_{010}$, and exchanged by the action of $\sigma_{001}$, 
where  $\sigma_{100}$, $\sigma_{010}$ and $\sigma_{001}$ are 
given in the proof of Proposition \ref{prop:F-symmetry}.
\end{proof}

\begin{theorem}
\label{th:rank}
The system $\hge{a}{B}$ is of rank $9$. Its singular locus $S$ is 
$$
S=\{(x_1,x_2)\in \C^2\mid x_1x_2R(x_1,x_2)=0\},
$$
where 
\begin{align*}
R(x)&=R(x_1,x_2)=\prod_{k_1,k_2=0}^2 
(1-\w^{k_1}\sqrt[3]{x_1}-\w^{k_2}\sqrt[3]{x_2})\\
&=(1-x_1-x_2)^3-27x_1x_2,
\end{align*}
and $\w=\dfrac{-1+\sqrt{-3}}{2}.$
\end{theorem}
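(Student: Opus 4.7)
The proof has three pieces: the factorization of $R$, the rank, and the singular locus.

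For the factorization, set $p=\sqrt[3]{x_1}$, $q=\sqrt[3]{x_2}$ and use $\prod_{k=0}^{2}(A-\w^k p)=A^3-p^3$ with $A=1-\w^{k_2}q$ to simplify the inner product over $k_1$ to $(1-\w^{k_2}q)^3-x_1$. Expanding $(1-\w^{k_2}q)^3$ as a quadratic in $t=\w^{k_2}$ and factoring it as $3q^2(t-\alpha)(t-\beta)$, the remaining product over $k_2$ evaluates via $\prod_{k_2}(t-\w^{k_2})=t^3-1$ to $27q^6(1-\alpha^3)(1-\beta^3)$; using $\alpha+\beta=1/q$ and $\alpha\beta=(1-x_1-x_2)/(3q^2)$ one obtains $(1-x_1-x_2)^3-27x_1x_2$.

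For the rank, I would work in $\overline{M}=\C(x)\otimes_{\C[x]}\mathcal{D}/(\mathcal{D} P_1+\mathcal{D} P_2)$. Collecting the degree-three monomials in $(\theta_1,\theta_2)$ in $P_i f=0$ gives the $2\times 2$ system
\begin{align*}
(1-x_1)\theta_1^3 f - x_1\theta_2^3 f &= 3x_1(\theta_1^2\theta_2+\theta_1\theta_2^2)f + (\text{degree}\le 2),\\
-x_2\theta_1^3 f + (1-x_2)\theta_2^3 f &= 3x_2(\theta_1^2\theta_2+\theta_1\theta_2^2)f + (\text{degree}\le 2),
\end{align*}
with determinant $1-x_1-x_2$. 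Since $[\theta_2,P_1]=[\theta_1,P_2]=0$, the same reductions apply to $P_1(\theta_2^j f)=0$ and $P_2(\theta_1^i f)=0$, so iterating expresses every $\theta_1^i\theta_2^j f$ with $\max(i,j)\ge 3$ as a $\C(x)$-linear combination of $\{\theta_1^i\theta_2^j f:0\le i,j\le 2\}$, giving $\dim_{\C(x)}\overline{M}\le 9$. For the matching lower bound, the indicial equations $\rho_1(\rho_1+b_1-1)(\rho_1+b_2-1)=0$ and $\rho_2(\rho_2+b_3-1)(\rho_2+b_4-1)=0$ obtained from (\ref{eq:DE1})--(\ref{eq:DE2}) at the origin produce nine characteristic exponents, which under generic parameters lift to nine linearly independent Frobenius solutions as in Theorem \ref{th:fund-sols}. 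Hence the rank equals $9$.

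For the singular locus, I would prove $S=\{x_1x_2R=0\}$ via two inclusions. The containment $S\supseteq\{R=0\}$ follows because $R(x)=0$ is the boundary of the convergence polydisc $\D$ of Proposition \ref{prop:converge}, so the series solution is singular there. The containment $S\supseteq\{x_i=0\}$ follows because the restriction to each axis is the generalized hypergeometric $_3\cF_2$ system, whose solutions include branch singularities $x_i^{1-b_k}$. For the reverse inclusion, I would compute the Pfaffian connection $\theta_k F=\Omega_k F$ on the basis $(\theta_1^i\theta_2^j f)_{0\le i,j\le 2}$ and check that its entries are holomorphic off $\{x_1x_2R=0\}$: the denominator $1-x_1-x_2$ of the first reduction is amplified, through the coupled iteration of the $P_1$- and $P_2$-reductions required for the remaining higher mixed monomials, into $R(x)$, while the $x_i$ factors enter when passing from the $\theta_i$-connection to the $\partial_i$-connection via $\theta_i=x_i\partial_i$. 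The main technical obstacle is verifying that this iterated reduction produces \emph{exactly} $R(x)$ rather than a proper multiple of it; a clean route uses the $S_3\times D_4$-symmetry of Proposition \ref{prop:E-symmetry} together with the $(\Z/(3\Z))^2$-covering of \S 1, on which the pullback of $S$ decomposes into the twelve lines of the Hesse configuration and Galois equivariance then forces the denominator to pull back to precisely the product of these lines.
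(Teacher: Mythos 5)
Your strategy is genuinely different from the paper's: the authors prove this theorem by a direct computer-algebra computation, forming the left ideal $I$ in the Weyl algebra, computing a Gr\"obner basis with respect to the weight $(0,0,1,1)$, saturating and eliminating with Risa/Asir to obtain $J=\langle x_1^4x_2^4R(x)\rangle$, and reading the rank off the staircase $\{\xi_1^5,\xi_1^3\xi_2,\xi_1\xi_2^2,\xi_2^3\}$ of a Gr\"obner basis of $\mathcal{W}_2I$. Your verification of the factorization of $R$ is fine. For the rank, however, the step you label ``iterating'' is exactly the content that needs proof: the first $2\times2$ elimination with determinant $1-x_1-x_2$ is correct, but for top degree $d\ge 5$ the relations $P_1\theta_1^i\theta_2^jf=P_2\theta_1^i\theta_2^jf=0$ ($i+j=d-3$) form an overdetermined system in the $d+1$ degree-$d$ symbols, and you must exhibit square subsystems with nonvanishing determinant at every level. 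The paper's computed initial terms $\xi_1\xi_2^2$ and $\xi_1^3\xi_2$ show that the reduced staircase is \emph{not} $\{\xi_1^i\xi_2^j\}_{0\le i,j\le 2}$, i.e.\ S-pair reductions produce relations not visible from your two $2\times2$ systems; so the termination and consistency of your reduction is an assertion, not an argument (even though the target set is very likely a basis, by the tensor-Vandermonde of the nine exponents).

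The singular-locus part has two gaps. First, $\{R=0\}\subseteq S$ does not follow from Proposition \ref{prop:converge}: the boundary of $\D$ is the real hypersurface $\sqrt[3]{|x_1|}+\sqrt[3]{|x_2|}=1$, not the complex curve $R=0$, and divergence of one series outside $\overline{\D}$ does not show that any specific point of the curve is a singular point of the \emph{system}; the honest route is nontrivial local monodromy around a generic point of $R=0$ (as in Lemma \ref{lem:eigen-spaces}) combined with Zariski closedness of $S$ and irreducibility of the cubic. Second, and more seriously, you yourself flag the crux --- that the Pfaffian denominators divide $x_1x_2R$ exactly rather than a proper multiple --- as an ``obstacle,'' and the proposed repair does not close it: $S_3\times D_4$-equivariance and the $(\Z/3\Z)^2$-covering only force the denominator to be an invariant polynomial, and an invariant polynomial need not pull back to a product of the twelve Hesse lines, so nothing in the argument excludes extra components. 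This is precisely the computation the paper delegates to the saturation $(\init_w(I):\langle\xi_1,\xi_2\rangle^\infty)\cap\C[x_1,x_2]$, and your proposal leaves it open.
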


\begin{proof}
We can regard the system as a left ideal $I$ in 
the Weyl algebra $W_2=\C\la x_1,x_2,\pa_1,\pa_2\ra$.
Any element of $W_2$ 
has a canonical form 
$$\sum_{(i_1,i_2)} c_{i_1,i_2}(x_1,x_2)\pa_1^{i_1}\pa_2^{i_2},
\quad  c_{i_1,i_2}(x_1,x_2)\in \C[x_1,x_2].$$
By replacing $\pa_j$ with $\xi_j$ $(j=1,2)$ in canonical forms, 
we have a natural map from 
$W_2$ to the polynomial ring $\C[x_1,x_2,\xi_1,\xi_2]$.  
Then the initial ideal $\init_{w}\,(I)$ with respect to the weight
vector $w=(0,0,1,1)$ is defined as an ideal of $\C[x_1,x_2,\xi_1,\xi_2]$.
The zero set $V(\init_{w}\,(I))$ of the initial ideal in $\C^4$ is
called the characteristic variety of $I$.  
The singular locus $\sing(I)$ of the left ideal $I$ is defined by
the Zariski closure of the image of 
$V(\init_{w}\,(I))\setminus \{(x,\xi) \mid \xi_1=\xi_2=0\}$
under the projection $\C^4\ni (x,\xi)\mapsto x\in \C^2$.  
It is shown in \S1.4 of \cite{SST} 
that 
$\sing(I)$ coincides with the zero set $V(J)$ of the polynomial ideal
\[
J = ( \init_{w}\,(I) : 
\langle \xi_1, \xi_2\rangle^\infty ) \cap \C[x_1,x_2].
\]
Generators of the initial ideal $\init_{w}\,(I)$ are calculated from
the Gr\"obner basis of $I$ with respect to the weight vector $w$.
We can execute saturation and elimination of 
the ideal $I$ 
by using Risa/Asir (a computer algebra system). 
By this computation, it turns out that 
$J=\langle x_1^4x_2^4\,R(x)\rangle$.
Thus we have
\[
\sing\,(I) = V(J) = V(\sqrt{J}) = V(x_1x_2R(x)),
\]
where $\sqrt{J}$ is the radical of $J$.

The holonomic rank of $I$ is defined by 
$$\rank\,(I) = \dim_{\C(x_1,x_2)} (\mathcal{W}_2\,/\,\mathcal{W}_2I ),$$
where $\mathcal{W}_2 = \C(x_1,x_2)\langle \partial_1, \partial_2\rangle$,  
and the quotient $\mathcal{W}_2/\mathcal{W}_2I$ is regraded as a vector space over $\C(x_1,x_2)$.
With respect to the graded lexicographic order in $\mathcal{W}_2$,
a left $\mathcal{W}_2$-ideal $\mathcal{W}_2 I$ has a Gr\"obner basis 
whose initial terms are given by
$\{ \xi_1^5, \xi_1^3\xi_2, \xi_1\xi_2^2, \xi_2^3\}$.
Then the vector space $\mathcal{W}_2/\mathcal{W}_2 I$ has a basis 
$\{1,\pa_1, \pa_1^2, \pa_1^3, \pa_1^4, \pa_2, \pa_1 \pa_2, 
\pa_1^2 \pa_2,  \pa_2^2 \}$ consisting of $9$ monomials.  
Thus the holonomic rank of $I$ is $9$.
\end{proof}

\begin{remark}
\begin{enumerate}
\item
Since the affine curve $R(x)=0$ has a nodal singular point $(-1,-1)$, 
it is rational. In fact, it is expressed by a complex parameter as
$$(x,y)=(t^3,(1-t)^3),\quad t\in \C.$$
Via this parametrization, $t=-\w$ and $t=-\w^2$ 
correspond to the singular point $(-1,-1)$. 
\item 
Under the covering map 
$\C^2\ni (z_1,z_2) \mapsto (x_1,x_2)=(z_1^3,z_2^3)\in \C^2$, 
the pull back of the cubic curve $R(x)=0$ is decomposed into 
nine lines 
$$1-\w^{k_1}z_1-\w^{k_2}z_2=0,\quad (0\le k_1,k_2\le 2).$$
Thus the pull back of the singular locus $S$ consists of these lines 
together with $z_1=0$, $z_2=0$ and the line at infinity in 
the projective plane $\P^2$.  Note that there are nine intersection 
points $[\z_0,\z_1,\z_2]=[0,1,-\w^{k_1}],[1,0,\w^{k_1}],[1,\w^{k_1},0]$
$(0\le k_1\le 2)$ 
of two lines of them, where $[\z_0,\z_1,\z_2]$ are the projective coordinates 
with $(z_1,z_2)=(\z_1/\z_0,\z_2/\z_0)$. 
These twelve lines and the nine points form the Hesse configuration, 
in which lines and points satisfies that three points per line and 
four lines through each point.
\end{enumerate}
\end{remark}

We set 
$$
X=\{(x_1,x_2)\in \C^2\mid x_1x_2R(x_1,x_2)\ne 0\},
$$
which is the complement of the singular locus $S$ of the system $\hge{a}{B}$.
We select a base point $\dot x=(\e_1,\e_2)$ in $\D\cap X\cap \R^2$ 
with $0<\e_2<\e_1$, and take a small neighborhood $U$ of $\dot x$ in 
$\D\cap X$. 

\begin{theorem}
\label{th:fund-sols}
Suppose that 
$$
b_1,\ b_2,\ b_3,\ b_4,\ b_1-b_2,\ b_3-b_4,\ 
\notin \Z. 
$$
A fundamental system of solutions to $\hge{a}{B}$ on $U$ 
is given by 
\begin{equation}
\label{eq:fund-sols}
F_{jk}(x)=
  x_1^{1-b_{1j}}x_2^{1-b_{2k}}
\hgf{a+(2-b_{1j}-b_{2k})(e_1+e_2+e_3)}
{\begin{matrix}
B_1+(1-b_{1j})(e_j+e_1+e_2)\\
B_2+(1-b_{2k})(e_k+e_1+e_2)
  \end{matrix}}
{x}
\end{equation}
for $1\le j,k\le 3$, where $e_j$ is the $j$-th unit row vector of $\R^3$, and 
the indices $j$ and $k$ in $F_{jk}(x)$ are regarded 
as elements of $\{0,1,2\}=\Z/(3\Z)$.
They are arrayed in the order 
\begin{equation} 
\label{eq:order}
(jk)=(00),\ (10),\ (20),\ (01),\ (11),\ (21),\  (02),\ (12),\  (22) 
\end{equation}
as
 \begin{align*}
&\hgf{a_1,a_2,a_3}
{\begin{matrix}b_1,b_2,1\\ b_3,b_4,1
   \end{matrix} }{x},\\
x_1^{1-b_1}&F\left(\begin{matrix}
a_1-b_1+1,&a_2-b_1+1,&a_3-b_1+1\\
2-b_1,&b_2-b_1+1,&1\\
b_3,&b_4,&1
\end{matrix}
;x\right),\\
x_1^{1-b_2}&F\left(
\begin{matrix} a_1-b_2+1,&a_2-b_2+1,&a_3-b_2+1\\
b_1-b_2+1,&2-b_2,&1\\
b_3,&b_4,&1
\end{matrix}
;x\right),
\end{align*}
\begin{align*}
x_2^{1-b_3}&F
\left(\begin{matrix}
a_1-b_3+1,& a_2-b_3+1,&a_3-b_3+1\\
b_1,&b_2,&1\\
2-b_3,&b_4-b_3+1,&1
\end{matrix}
;x\right),\\
x_1^{1-b_1}x_2^{1-b_3}&F\left(\begin{matrix}
a_1\!-\!b_1\!-\!b_3\!+\!2,&a_2\!-\!b_1\!-\!b_3\!+\!2,
&a_3\!-\!b_1\!-\!b_3\!+\!2\\
2-b_1,&b_2-b_1+1,&1\\
2-b_3,&b_4-b_3+1,&1
\end{matrix}
;x\right),\\
x_1^{1-b_2}x_2^{1-b_3}&F\left(\begin{matrix}
a_1\!-\!b_2\!-\!b_3\!+\!2,&a_2\!-\!b_2\!-\!b_3\!+\!2,
&a_3\!-\!b_2\!-\!b_3\!+\!2\\
b_1-b_2+1,&2-b_2,&1\\
2-b_3,&b_4-b_3+1,&1
\end{matrix}
;x\right),
\end{align*}
\begin{align*}
x_2^{1-b_4} &F\left(
\begin{matrix}a_1-b_4+1,&a_2-b_4+1,&a_3-b_4+1\\
b_1,&b_2,&1\\
b_3-b_4+1,&2-b_4,&1
\end{matrix}
;x\right),\\
x_1^{1-b_1}x_2^{1-b_4}&F\left(\begin{matrix}
a_1\!-\!b_1\!-\!b_4\!+\!2,&a_2\!-\!b_1\!-\!b_4\!+\!2,
&a_3\!-\!b_1\!-\!b_4\!+\!2\\
2-b_1,&b_2-b_1+1,&1 \\
b_3-b_4+1,&2-b_4,&1
\end{matrix}
;x\right),\\
x_1^{1-b_2}x_2^{1-b_4}&F\left(\begin{matrix}
a_1\!-\!b_2\!-\!b_4\!+\!2,&a_2\!-\!b_2\!-\!b_4\!+\!2,
&a_3\!-\!b_2\!-\!b_4\!+\!2\\
b_1-b_2+1,&2-b_2,&1\\ 
b_3-b_4+1,&2-b_4,&1
\end{matrix}
;x\right).
\end{align*}
\end{theorem}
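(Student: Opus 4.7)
The plan is to verify two claims: (i) each $F_{jk}(x)$ is a local solution of $\hge{a}{B}$ on $U$, and (ii) the nine $F_{jk}$ are linearly independent over $\C$. Since the rank of $\hge{a}{B}$ is $9$ by Theorem \ref{th:rank}, (i) and (ii) together force $\{F_{jk}\}$ to be a fundamental system.

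For (i), I would use a monomial gauge transformation. With the identity $\theta_i(x_1^\alpha x_2^\beta g) = x_1^\alpha x_2^\beta(\alpha\delta_{i1}+\beta\delta_{i2}+\theta_i)g$, the substitution $f(x) = x_1^{1-b_{1j}}x_2^{1-b_{2k}}g(x)$ conjugates the left-hand operator of (\ref{eq:DE1}) — which with the convention $b_{13}=1$ is $\prod_{m=1}^3(b_{1m}-1+\theta_1)$ — into $\prod_{m=1}^3(b_{1m}-b_{1j}+\theta_1)$, whose $m=j$ factor collapses to $\theta_1$ and whose remaining two factors can be written as $(b'_{1m'}-1+\theta_1)$ with $b'_{1m'}=b_{1m}-b_{1j}+1$; these are exactly the first two entries of $B_1+(1-b_{1j})(e_j+e_1+e_2)$ in (\ref{eq:fund-sols}). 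Simultaneously, the right-hand side $x_1\prod_i(a_i+\theta_1+\theta_2)f$ of (\ref{eq:DE1}) is conjugated into $x_1\prod_i\bigl(a_i+(2-b_{1j}-b_{2k})+\theta_1+\theta_2\bigr)g$. The same analysis of (\ref{eq:DE2}) produces the corresponding shift of $B_2$. Thus $g$ satisfies the system $\hge{a'}{B'}$ with $(a',B')$ exactly as in (\ref{eq:fund-sols}), and $g=\hgf{a'}{B'}{x}$ is a solution by Proposition \ref{prop:HGDE}. The non-integer hypotheses ensure that no shifted denominator parameter lies in $-\N$, so this series converges on $\D$ by Proposition \ref{prop:converge}, and $F_{jk}$ is a single-valued holomorphic solution of $\hge{a}{B}$ on $U$.

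For (ii), each $F_{jk}$ has leading monomial $x_1^{1-b_{1j}}x_2^{1-b_{2k}}$ because its hypergeometric series has constant term $1$. Under $b_1,b_2,b_1-b_2\notin\Z$ the three values $0,\,1-b_1,\,1-b_2$ taken by $1-b_{1j}$ are pairwise distinct modulo $\Z$, and similarly $b_3,b_4,b_3-b_4\notin\Z$ separates the three values of $1-b_{2k}$ modulo $\Z$. Hence the nine leading exponent pairs lie in nine distinct cosets of $\Z^2$, and any relation $\sum c_{jk}F_{jk}\equiv 0$ on a fixed branch over $U$ decomposes into nine coset-disjoint pieces, forcing every $c_{jk}=0$ upon comparison of leading coefficients.

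The main technical obstacle is the bookkeeping in the gauge transformation: one needs to verify, for each of the nine pairs $(j,k)$, that the reordered factors reproduce the precise parameters of $(a',B')$ listed in (\ref{eq:fund-sols}). Fortunately this reduces by the $S_3\times D_4$-symmetry of Proposition \ref{prop:E-symmetry} to essentially one nontrivial case, and the algebraic content is the symmetry of $\prod_{m=1}^3(b_{1m}-1+\theta_1)$ in its three roots $\{1-b_{1m}\}$: shifting $\theta_1$ by $1-b_{1j}$ simply permutes which root becomes $0$.
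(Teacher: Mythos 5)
Your proposal is correct and follows essentially the same route as the paper: the paper's proof verifies exactly the operator identities $\prod_i(b_{1i}-1+\theta_1)\,x_1^{1-b_{1j}}x_2^{1-b_{2k}}=x_1^{1-b_{1j}}x_2^{1-b_{2k}}\prod_i(b_{1i}-b_{1j}+\theta_1)$ (and its analogues) that your gauge transformation encodes, and then concludes linear independence from the mutual distinctness of the power functions $x_1^{1-b_{1j}}x_2^{1-b_{2k}}$, just as you do via the coset argument. Your added remarks on convergence and on invoking the rank-$9$ statement to upgrade nine independent solutions to a fundamental system only make explicit what the paper leaves implicit.
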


\begin{proof}
We can show that these functions are solutions to $\hge{a}{B}$
by using properties  
\begin{align*}
&\prod_{i=1}^3(a_{i}\!+\!\theta_1\!+\!\theta_2)x_1^{1-b_{1j}}x_2^{1-b_{2k}}
\!=\!x_1^{1-b_{1j}}x_2^{1-b_{2k}}\prod_{i=1}^3
(a_{i}\!-\!b_{1j}\!-\!b_{2k}\!+\!2\!+\!\theta_1\!+\!\theta_2),\\
&\prod_{i=1}^3(b_{1i}-1+\theta_1)x_1^{1-b_{1j}}x_2^{1-b_{2k}}
=x_1^{1-b_{1j}}x_2^{1-b_{2k}}\prod_{i=1}^3(b_{1i}-b_{1j}+\theta_1),\\
&\prod_{i=1}^3(b_{2i}-1+\theta_2)x_1^{1-b_{1j}}x_2^{1-b_{2k}}
=x_1^{1-b_{1j}}x_2^{1-b_{2k}}\prod_{i=1}^3(b_{2i}-b_{2k}+\theta_2),
\end{align*}
as elements of the (extended) Weyl algebra. Since the power functions
$x_1^{1-b_{1j}}x_2^{1-b_{2k}}$ are mutually different under our assumption, 
these functions are linearly independent.
\end{proof}
Recall that 
the series $\hgf{a}{B}{x}$ and the system $\hgex{a}{B}{x}$ are 
invariant under the action of $S_3\times D_4$.

\begin{proposition}
\label{prop:S-symmetry}
The group $S_3\times D_4$ 
acts on the fundamental system 
(\ref{eq:fund-sols}) of solutions to $\hge{a}{B}$ via the change of 
parameters and variables in Proposition \ref{prop:F-symmetry}.
There are three orbits $\{F_{00}(x)\}$, 
$$\{F_{10}(x),F_{20}(x),F_{01}(x),F_{02}(x)\},\quad
\{F_{11}(x),F_{21}(x),F_{12}(x),F_{22}(x)\}.$$
\end{proposition}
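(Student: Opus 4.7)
The plan is to apply Proposition \ref{prop:F-symmetry}, which says every hypergeometric series of the shape $\hgf{a'}{B'}{x}$ is invariant under the $S_3\times D_4$ action on its parameters and variables. Each $F_{jk}(x)$ in \eqref{eq:fund-sols} is a monomial $x_1^{1-b_{1j}}x_2^{1-b_{2k}}$ times precisely such a series (with explicit parameter shifts determined by $j$ and $k$), so the induced action of any $g\in S_3\times D_4$ on the underlying data $(a,B,x)$ permutes the nine expressions among themselves. It therefore suffices to read off, for a set of generators of $S_3\times D_4$, the resulting permutation on the indices $(j,k)\in(\Z/3\Z)^2$.

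First I would note that the $S_3$-factor acts trivially on every $F_{jk}$: permuting $a_1,a_2,a_3$ only reorders the three Pochhammer factors $(a'_i,n_1+n_2)$ in the numerator, which enter symmetrically, and the uniform shift $(2-b_{1j}-b_{2k})(e_1+e_2+e_3)$ respects this symmetry. So only the $D_4$-factor can contribute nontrivially.

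For each of the three generators of $D_4$ I would work out the induced map on $(j,k)$ by substituting the transformed parameters (and, in the case of $\sigma_{001}$, also the swapped variables) into the defining expression of $F_{jk}$ and then invoking Proposition \ref{prop:F-symmetry} to put the inner series back into the canonical form used in \eqref{eq:fund-sols}. The expected outcomes are: $\sigma_{100}$ (swap $b_1\leftrightarrow b_2$) exchanges $j=1$ and $j=2$ while fixing $j=0$, because $b_{1,0}=1$ is the distinguished third entry of $B_1$; analogously $\sigma_{010}$ exchanges $k=1$ and $k=2$ while fixing $k=0$; and $\sigma_{001}$ (swap $B_1\leftrightarrow B_2$ together with $x_1\leftrightarrow x_2$) sends $(j,k)\mapsto(k,j)$.

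Finally I would compute the orbits of this $D_4$-action on $(\Z/3\Z)^2$: $(0,0)$ is fixed by all three generators, giving the singleton $\{F_{00}\}$; iterating the generators starting from $(1,0)$ sweeps out $\{(1,0),(2,0),(0,1),(0,2)\}$; and starting from $(1,1)$ sweeps out $\{(1,1),(2,1),(1,2),(2,2)\}$. The main obstacle is purely bookkeeping: one must track the $\Z/3\Z$-indexing convention (under which $b_{1,0}=b_{2,0}=1$) through each substitution, and apply Proposition \ref{prop:F-symmetry} attentively to the inner series so as to match it with the claimed member of the list rather than merely an equivalent hypergeometric form.
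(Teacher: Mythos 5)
Your proposal is correct and follows essentially the same route as the paper: reduce to the $D_4$-factor (since the $a_i$-shifts are symmetric), observe that the generators $\sigma_{100},\sigma_{010},\sigma_{001}$ induce the permutations $j\mapsto 1\leftrightarrow 2$, $k\mapsto 1\leftrightarrow 2$, and $(j,k)\mapsto(k,j)$ on the index set $(\Z/3\Z)^2$, and then read off the three orbits. The paper does exactly this bookkeeping by applying specific group elements to the representatives $F_{00}$, $F_{10}$ and $F_{11}$.
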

\begin{proof}
We have only to consider actions of $D_4$ on $B$ and $x$.
It is easy to see that every $F_{jk}(x)$ $(0\le j,k\le2)$ is changed into 
one of (\ref{eq:fund-sols}).  
By Proposition \ref{prop:F-symmetry}, $F_{00}(x)$ is invariant under 
this action. By the actions 
$\sigma_{100}$, $\sigma_{001}$ and $\sigma_{101}$
on $F_{10}(x)$, it changes into $F_{20}(x)$, $F_{01}(x)$ and $F_{02}(x)$, 
respectively; see Table \ref{tab:D4S4}.
For the orbit of $F_{11}(x)$, act 
$\sigma_{100}$, $\sigma_{010}$ and $\sigma_{110}$
on $F_{11}$.
\end{proof}

\begin{cor}
\label{cor:sol-at-infty}
We set 
$y=(y_1,y_2)=(\dfrac{-x_1}{x_2},\dfrac{1}{x_2})$, and 
suppose that 
$$
b_1,\ b_2,\ b_1-b_2,\ a_1-a_2,\ a_1-a_3,\ a_2-a_3\notin \Z.
$$ 
There are nine solutions to $\hge{a}{B}$ around a point 
$\dot y\in X$ near to $y=(0,0)$ as follows:
\begin{align*}
 y_2^{a_{i}}
&F\left(
\begin{matrix} a_{i},&a_{i}-b_3+1,&a_{i}-b_4+1\\
b_1,&b_2,&1\\
a_{i}-a_{j}+1,&a_{i}-a_{k}+1,&1
\end{matrix}
;y\right),\\
y_1^{1-b_1} y_2^{a_{i}}
&F\left(
\begin{matrix} a_{i}-b_1+1,&a_{i}-b_1-b_3+2,&a_{i}-b_1-b_4+2\\
2-b_1,&b_2-b_1+1,&1\\
a_{i}-a_{j}+1,&a_{i}-a_{k}+1,&1
\end{matrix}
;y\right),\\
y_1^{1-b_2} y_2^{a_{i}}
&F\left(
\begin{matrix} a_{i}-b_2+1,&a_{i}-b_2-b_3+2,&a_{i}-b_2-b_4+2\\
b_1-b_2+1,&2-b_2,&1\\
a_{i}-a_{j}+1,&a_{i}-a_{k}+1,&1
\end{matrix}
;y\right),
\end{align*}
where $i=1,2,3$ and $\{j,k\}=\{1,2,3\}-\{i\}$ as a set. 
\end{cor}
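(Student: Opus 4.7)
The plan is to reduce Corollary \ref{cor:sol-at-infty} to Theorem \ref{th:fund-sols} via the change of variables $(x_1,x_2)=(-y_1/y_2,\ 1/y_2)$ and the Euler-type substitution $f(x)=y_2^{a_i}g(y)$, which strips off the expected leading factor at $y=0$. The goal is to show that $g$ satisfies a system of the same hypergeometric form $\hge{a'}{B'}$ in the variables $y$ with
$$
a'=(a_i,\ a_i-b_3+1,\ a_i-b_4+1),\qquad
B'=\begin{pmatrix}b_1,&b_2,&1\\ a_i-a_j+1,&a_i-a_k+1,&1\end{pmatrix},
$$
where $\{j,k\}=\{1,2,3\}\setminus\{i\}$, and then to apply Theorem \ref{th:fund-sols} to the transformed system.

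First I would compute the transformation of the Euler operators. A direct chain-rule calculation yields $\theta_1=\vartheta_1$ and $\theta_2=-\vartheta_1-\vartheta_2$, so that $\theta_1+\theta_2=-\vartheta_2$, where $\vartheta_i=y_i\pa_{y_i}$. For $f=y_2^{a_i}g$ this produces $(a_\ell+\theta_1+\theta_2)f=y_2^{a_i}(a_\ell-a_i-\vartheta_2)g$, and the decisive point is that the factor with $\ell=i$ reduces to $-\vartheta_2$. This $\vartheta_2$ absorbs the $y_2^{-1}$ introduced on the right-hand sides of (\ref{eq:DE1}) and (\ref{eq:DE2}) through $x_1=-y_1/y_2$ and $x_2=1/y_2$. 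After dividing by $y_2^{a_i}$, equation (\ref{eq:DE2}) simplifies to
$$
\vartheta_2(a_i-a_j+\vartheta_2)(a_i-a_k+\vartheta_2)g
=y_2\prod_{\ell=1}^{3}(a'_\ell+\vartheta_1+\vartheta_2)g,
$$
which is precisely the $y_2$-equation defining $\hge{a'}{B'}$. Substituting this relation into the transform of (\ref{eq:DE1}) cancels an apparent factor of $y_2^{-1}$ and yields
$$
\vartheta_1(b_1-1+\vartheta_1)(b_2-1+\vartheta_1)g
=y_1\prod_{\ell=1}^{3}(a'_\ell+\vartheta_1+\vartheta_2)g,
$$
the $y_1$-equation for $\hge{a'}{B'}$.

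The hypotheses $a_1-a_2,\ a_1-a_3,\ a_2-a_3\notin\Z$ together with $b_1,b_2,b_1-b_2\notin\Z$ translate exactly into the non-integrality conditions $b'_1,b'_2,b'_3,b'_4,b'_1-b'_2,b'_3-b'_4\notin\Z$ of Theorem \ref{th:fund-sols} for the transformed system, which therefore yields a nine-dimensional fundamental system of solutions near $y=0$. Multiplying by $y_2^{a_i}$, the three solutions of Theorem \ref{th:fund-sols} with trivial $y_2$-prefactor become the three entries listed in the corollary for the index $i$; the remaining six solutions, whose $y_2$-prefactors $y_2^{1-b'_{2k}}$ for $k=1,2$ equal $y_2^{a_j-a_i}$ and $y_2^{a_k-a_i}$, reproduce the entries listed for the other two values of $i$ after multiplication by $y_2^{a_i}$. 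Linear independence is immediate from the distinctness of the leading monomials $y_1^{c_1}y_2^{a_\ell}$ under the non-integrality hypotheses. I expect the main obstacle to be the bookkeeping in the middle step: (\ref{eq:DE1}) becomes a pure hypergeometric equation only \emph{after} one substitutes the reduced form of (\ref{eq:DE2}) to cancel a spurious factor of $y_2$, and this interplay between the two equations must be tracked carefully.
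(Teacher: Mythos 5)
Your proposal is correct and takes essentially the same route as the paper: both transform the Euler operators via $\theta_1=\vartheta_1$, $\theta_2=-\vartheta_1-\vartheta_2$, substitute $f=y_2^{a_i}g$, recognize the resulting system as $\hgex{a'}{B'}{y}$ with $a'=(a_i,a_i-b_3+1,a_i-b_4+1)$ and $B'_2=(a_i-a_j+1,a_i-a_k+1,1)$, and invoke Theorem \ref{th:fund-sols}. The only (cosmetic) differences are that the paper eliminates the common right-hand side of (\ref{eq:DE1})--(\ref{eq:DE2}) before transforming rather than substituting afterwards, and that it runs over $\mu=a_1,a_2,a_3$ separately where you observe that one application of Theorem \ref{th:fund-sols} already yields all nine solutions.
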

\begin{proof}
Since $y_1=-x_1/x_2$, $y_2=1/x_2$, we have
\begin{align*}
x_1\pa_1&=x_1\left(
\dfrac{\pa y_1}{\pa x_1}\dfrac{\pa} {\pa y_1}
+\dfrac{\pa y_2}{\pa x_1}\dfrac{\pa }{\pa y_2}
\right)=-\frac{x_1}{x_2}\dfrac{\pa} {\pa y_1},\\
 x_2\pa_2&=x_2\left(
\dfrac{\pa y_1}{\pa x_2}\dfrac{\pa} {\pa y_1}
+\dfrac{\pa y_2}{\pa x_2}\dfrac{\pa }{\pa y_2}
\right)=\dfrac{x_1}{x_2}\dfrac{\pa} {\pa y_1}
-\dfrac{1}{x_2}\dfrac{\pa} {\pa y_2}.
\end{align*}
Thus the operators $\theta_{y_1}=y_1\dfrac{\pa}{\pa y_1}$ and 
$\theta_{y_2}=y_2\dfrac{\pa}{\pa y_2}$ 
relate $\theta_1$ and $\theta_2$ as
$$\theta_1= \theta_{y_1},\quad 
\theta_2=-(\theta_{y_1}+\theta_{y_2}).
$$ 
Note that the system $\hge{a}{B}$ is generated by the equation (\ref{eq:DE2}) 
and 
$$x_2\theta_1(b_1\!-\!1+\!\theta_1)(b_2\!-\!1\!+\!\theta_1)f(x)=
x_1\theta_2(b_3\!-\!1+\!\theta_2)(b_4\!-\!1\!+\!\theta_2)f(x).$$
We express these differential equations 
in terms of $\theta_{y_1}$ and $\theta_{y_2}$: 
\begin{align*}
&(-a_1+\theta_{y_2})(-a_2+\theta_{y_2})
  (-a_3+\theta_{y_2})f(y)\\
=
& y_2
(\theta_{y_1}+\theta_{y_2})
(-b_3+1+\theta_{y_1}+\theta_{y_2})
(-b_4+1+\theta_{y_1}+\theta_{y_2})f(y),
\\[5mm]
&\theta_{y_1}(b_1-1+\theta_{y_1})
(b_2-1+\theta_{y_1})f(y)\\
=& 
y_1(\theta_{y_1}+\theta_{y_2})
(-b_3+1+\theta_{y_1}+\theta_{y_2})
(-b_4+1+\theta_{y_1}+\theta_{y_2})f(y).
\end{align*}
Substitute $f(y)=y_2^{\mu}g(y)$ into these differential equations. 
Then we have 
\begin{align*}
&(\mu-a_1+\theta_{y_2})(\mu-a_2+\theta_{y_2})
  (\mu-a_3+\theta_{y_2})g(y)\\
=
& y_2
(\mu+\theta_{y_1}+\theta_{y_2})
(\mu-b_3+1+\theta_{y_1}+\theta_{y_2})
(\mu-b_4+1+\theta_{y_1}+\theta_{y_2})g(y),
\\[5mm]
&\theta_{y_1}(b_1-1+\theta_{y_1})
(b_2-1+\theta_{y_1})g(y)\\
=& 
y_1(\mu+\theta_{y_1}+\theta_{y_2})
(\mu-b_3+1+\theta_{y_1}+\theta_{y_2})
(\mu-b_4+1+\theta_{y_1}+\theta_{y_2})g(y).
\end{align*}
If $\mu=a_i$ and 
$g(y)=
F\left(
\begin{matrix} a_{i},&a_{i}-b_3+1,&a_{i}-b_4+1\\
b_1,&b_2,&1\\
a_{i}-a_{j}+1,&a_{i}-a_{k}+1,&1
\end{matrix}
;y\right)$ then $y_2^\mu g(y)$ satisfies these differential 
equations. To obtain the rests, consider the series with 
exponents $y_1^{1-b_1}$ and $y_1^{1-b_2}$ in Theorem 
\ref{th:fund-sols}. 
\end{proof}

\section{Integral representations of Euler type}
\label{sec:Int-rep}
\begin{theorem}\label{th:int-rep}
The hypergeometric series $\hgf{a}{B}{x}$ admits the integral representation 
of Euler type:
\begin{equation}
\label{eq:int-rep}
C_{00}\cdot \int_{\De_{00}} u(t,x)dt,
\end{equation}
where $u(t,x)$ is 
$$
\Big(\prod_{j=1}^4 t_j^{-b_j}\Big)
(1-t_1-t_3)^{b_1+b_3-a_1-2}(1-t_2-t_4)^{b_2+b_4-a_2-2}
\Big(1-\frac{x_1}{t_1t_2}-\frac{x_2}{t_3t_4}\Big)^{-a_3},
$$
$dt= dt_1\wedge dt_3\wedge dt_2\wedge dt_4$, 
and the gamma factor $C_{00}$ is 
$$\frac{\G(1-a_1)\G(1-a_2)}{\G(1\!-\! b_1)\G(1\!-\! b_3)
\G(b_1\!+\! b_3\!-\! a_1\!-\! 1)
\G(1\!-\! b_2)\G(1\!-\! b_4)\G(b_2\!+\!b_4\!-\! a_2\!-\!1)}.
$$
Here $\De_{00}$ is a $4$-chain $\reg_{00}^c(\triangle_1\times \triangle_2)$ 
of the regularization of the direct product of triangles
\begin{align*}
\triangle_1&=\{(t_1,t_3)\in \R^2\mid t_1>0,\ t_3>0,\ 1-t_1-t_3>0\},\\
\triangle_2&=\{(t_2,t_4)\in \R^2\mid t_2>0,\ t_4>0,\ 1-t_2-t_4>0\},
\end{align*} 
associated with 
$$u(t,0,0)=t_1^{-b_1}t_3^{-b_3}(1\!-\! t_1\!-\! t_3)^{b_1+b_3-a_1-2}
 t_2^{-b_2}t_4^{-b_4}(1\!-\! t_2\!-\!t_4)^{b_2+b_4-a_2-2}, 
$$
and $x=(x_1,x_2)$ is supposed to be so close to $(0,0)$ that 
the hypersurface 
$$Q=\{t\in \C^4\mid t_1t_2t_3t_4-t_3t_4x_1 -t_1t_2x_2=0\}$$
does not intersect with any component of $\De_{00}$.
\end{theorem}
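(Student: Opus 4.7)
The plan is to expand the last factor $\bigl(1-\tfrac{x_1}{t_1t_2}-\tfrac{x_2}{t_3t_4}\bigr)^{-a_3}$ as a double binomial series in $x_1$ and $x_2$, integrate term by term, and recognize each term-integral as a product of two classical $2$-dimensional Dirichlet (Beta) integrals---one over $\reg_{00}^c\triangle_1$ in the $(t_1,t_3)$-variables and one over $\reg_{00}^c\triangle_2$ in the $(t_2,t_4)$-variables. The hypothesis $Q\cap\De_{00}=\emptyset$ ensures uniform convergence of the expansion on $\De_{00}$, so termwise integration and Fubini are legal; after this reduction the calculation becomes pure bookkeeping of Gamma factors.

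Concretely, substituting the expansion
$$
\Bigl(1-\frac{x_1}{t_1t_2}-\frac{x_2}{t_3t_4}\Bigr)^{-a_3}
=\sum_{n_1,n_2\ge 0}\frac{(a_3,n_1+n_2)}{(1,n_1)(1,n_2)}\cdot\frac{x_1^{n_1}x_2^{n_2}}{t_1^{n_1}t_2^{n_1}t_3^{n_2}t_4^{n_2}}
$$
into $u(t,x)$ decouples the integrand into a $(t_1,t_3)$-piece and a $(t_2,t_4)$-piece. By the regularized Dirichlet formula, the $(n_1,n_2)$-th term of the $(t_1,t_3)$-integral equals
$$
\frac{\G(1-b_1-n_1)\,\G(1-b_3-n_2)\,\G(b_1+b_3-a_1-1)}{\G(1-a_1-n_1-n_2)},
$$
and the $(t_2,t_4)$-integral is obtained by replacing $(b_1,b_3,a_1)$ by $(b_2,b_4,a_2)$. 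Using the identities $\G(1-c-n)=(-1)^n\G(1-c)/(c,n)$ and $1/\G(1-c-n)=(-1)^n(c,n)/\G(1-c)$, the $(-1)^{n_1}$- and $(-1)^{n_2}$-factors produced within each triangle cancel in pairs; what remains is exactly the $(n_1,n_2)$-summand of (\ref{eq:HGF}) multiplied by an $n$-independent product of Gamma factors that collects into $1/C_{00}$. Summation over $(n_1,n_2)$ then yields the claimed identity.

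The main obstacle is the regularized Dirichlet evaluation itself for general complex parameters. I would first establish the formula on the open region $\Re(1-b_1),\,\Re(1-b_3),\,\Re(b_1+b_3-a_1-1)>0$ (and analogously for the second triangle), where the integral over the naïve simplex converges absolutely and equals the claimed Gamma ratio by the classical $2$-variable Beta formula, and dominated convergence legitimates the interchange of summation and integration. For general $(a,B)$, both sides of (\ref{eq:int-rep}) are meromorphic in the parameters away from the polar divisors of the factors in $C_{00}^{-1}$; the regularized chain $\reg_{00}^c$ is constructed in \S4 of \cite{G1} precisely so that the twisted integral equals its meromorphic continuation (with the $(-1)^n$ phases built into the definition of the chain), and the condition on $x$ keeps the secondary divisor $Q$ away from $\De_{00}$ throughout this continuation. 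The identity principle then propagates the equality to the whole admissible parameter space.
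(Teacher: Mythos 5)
Your proposal is correct and follows essentially the same route as the paper's proof: expand $\bigl(1-\tfrac{x_1}{t_1t_2}-\tfrac{x_2}{t_3t_4}\bigr)^{-a_3}$ as a double series (valid on $\De_{00}$ because $x$ is close to $(0,0)$), integrate termwise so the integral decouples into the two regularized triangle integrals, and use $\G(1-c-n)=(-1)^n\G(1-c)/(c,n)$ to match the $(n_1,n_2)$-summand of (\ref{eq:HGF}) with the sign factors cancelling. The extra remarks on establishing the Dirichlet evaluation first for parameters in a convergence region and then continuing meromorphically are a reasonable elaboration of what the paper delegates to the regularization construction, not a different method.
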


\begin{remark}
\label{rem:regularization}
\begin{enumerate}
\item 
We set 
$$\ell_{ij}(t)=1-t_i-t_j\ (1\le i<j\le  4),\quad  
q(t)=1-\frac{x_1}{t_1t_2}-\frac{x_2}{t_3t_4},
$$
and 
\begin{equation}
\label{eq:int-space}
T=\{t\in \C^4\mid t_1t_2t_3t_4\ell_{13}(t)\ell_{24}(t)q(t)\ne 0\},
\end{equation}
which depends on $x\in \C^2$. 
For any fixed $x\in X$, a locally constant sheaf $\mathcal{L}_u$ 
on $T$ is given by $u(t,x)$. 
Twisted homology groups $H_k(T,\mathcal{L}_u)$ 
are defined from a complex of chains with sections of $\mathcal{L}_u$.  
Similarly, locally finite ones $H_k^{lf}(T,\mathcal{L}_u)$ are defined.
It is known that the natural map from $H_4(T,\mathcal{L}_u)$ to 
$H_4^{lf}(T,\mathcal{L}_u)$ becomes isomorphic under some non-integral 
conditions on the parameters. In this case, 
the regularization $\reg$ is defined by its inverse.
It is different from the regularization $\reg_{00}$ 
in the construction of $\De_{00}$, since the function $u(t,0,0)$ is used.
However, by loading a branch of $u(t,x)$ 
on every component of $\De_{00}$, we have an element $\De_{00}^u$ of 
$H_4(T,\mathcal{L}_u)$ for any $x$ near to $(0,0)$. 
We can also make the continuation of any element 
$\De^u \in H_4(T,\mathcal{L}_u)$ along a path in $X$ 
starting from $\dot x=(\e_1,\e_2)$.
\item 
We can construct $\reg_{00}^c(\triangle_1\times \triangle_2)$ as
the direct product of $2$-chains given by the 
regularizations of the triangles 
\begin{align*}
\triangle_1&=\{(t_1,t_3)\in \R^2\mid t_1>0,\ t_3>0,\ 1-t_1-t_3>0\},\\
\triangle_2&=\{(t_2,t_4)\in \R^2\mid t_2>0,\ t_4>0,\ 1-t_2-t_4>0\},
\end{align*} 
associated with  
\begin{align*}
u_1&=t_1^{-b_1}t_3^{-b_3}(1\!-\! t_1\!-\! t_3)^{b_1+b_3-a_1-2},\\
u_2&=t_2^{-b_2}t_4^{-b_4}(1\!-\! t_2\!-\!t_4)^{b_2+b_4-a_2-2}, 
\end{align*}
see Figure \ref{fig:cycle}. 
Refer to \S 3.2.4 of \cite{AK} and \S2 in \cite{GM} 
for an explicit construction of the regularization. 
If $x$ is close to $(0,0)$, then the hypersurface 
$Q$ passes through the tuber neighborhood of the hyperplanes 
$t_1=0,\dots,t_4=0$, which is made in the construction of 
the regularization of $\triangle_1\times \triangle_2$. 
Thus we can assume that 
$$\big|\frac{x_1}{t_1t_2}+\frac{x_2}{t_3t_4}\big|<1$$
for any $t$ in each component of $\De_{00}$.
\end{enumerate}
\end{remark}
\begin{figure}
\includegraphics[width=10cm]{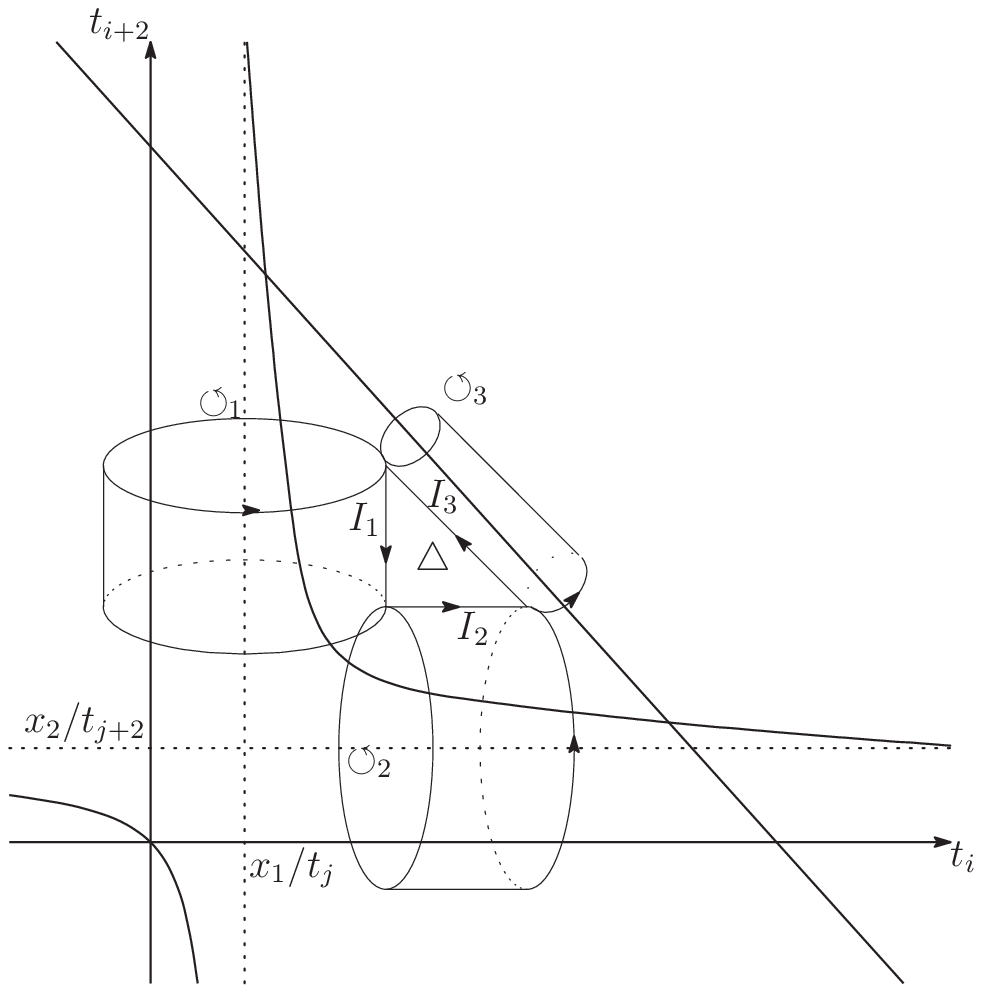}
\caption{Regularizations of $\triangle_i$ ($i=1,2$, $\{i,j\}=\{1,2\}$)}
\label{fig:cycle}
\end{figure}

\begin{proof}
Since $x$ is close to $(0,0)$, the factor including $x_1$ and $x_2$ 
admits the expansion 
\begin{align*}
& \Big(1-\frac{x_1}{t_1t_2}-\frac{x_2}{t_3t_4}\Big)^{-a_3}
=\sum_{N=0}^{\infty} \frac{(a_3,N)}{N!}
\Big(\frac{x_1}{t_1t_2}+\frac{x_2}{t_3t_4}\Big)^N\\
=&\sum_{N=0}^{\infty} \frac{(a_3,N)}{N!}
\sum_{n_1+n_2=N}\binom{N}{n_1}
\Big(\frac{x_1}{t_1t_2}\Big)^{n_1}\Big(\frac{x_2}{t_3t_4}\Big)^{n_2}\\
=&\sum_{n_1,n_2=0}^\infty \frac{(a_3,n_1+n_2)}{n_1!n_2!}
(t_1t_2)^{-n_1}(t_3t_4)^{-n_2}\cdot x_1^{n_1}x_2^{n_2}.
\end{align*}
Thanks to Remark \ref{rem:regularization} (2), this expansion is valid 
on each component of $\De_{00}$.
Change the order of the summation and the integration.
The coefficient of $x_1^{n_1}x_2^{n_2}$ in the integration 
(without the gamma factor $C_{00}$) is the product of 
\begin{align*}
& \int_{\triangle_1}t_1^{-b_1-n_1}t_3^{-b_3-n_2}(1-t_1-t_3)^{b_1+b_3-a_1-2}
dt_1\wedge dt_3\\
&=
\frac{\G(1-b_1-n_1)\G(1-b_3-n_2)\G(b_1+b_3-a_1-1)}
{\G(1-a_1-n_1-n_2)}
\end{align*}
and 
\begin{align*}
& \int_{\triangle_2}t_2^{-b_2-n_1}t_4^{-b_4-n_2}(1-t_2-t_4)^{b_2+b_4-a_2-2}
dt_2\wedge dt_4\\
&=
\frac{\G(1-b_2-n_1)\G(1-b_4-n_2)\G(b_2+b_4-a_2-1)}
{\G(1-a_2-n_1-n_2)}.
\end{align*}
By using the formulas 
$(\a,m)\G(\a)=\G(\a+m)$, and  
$\G(\a)\G(1-\a)=\dfrac{\pi}{\sin(\pi\a)}$, 
we rewrite $\G(1-b_1-n_1)$ to
$$\G(1-b_1-n_1)
=\frac{\pi}{(-1)^{n_1}\sin(\pi b_1)\G(b_1)(b_1,n_1)}
=\frac{\G(1-b_1)}{(-1)^{n_1}(b_1,n_1)}.
$$
Similarly we have 
\begin{align*}
\G(1-b_3-n_2)&=\frac{\G(1-b_3)}{(-1)^{n_2}(b_3,n_2)},\\
\G(1-a_1-n_1-n_2)&=
\frac{\G(1-a_1)}{(-1)^{n_1+n_2}(a_1,n_1+n_2)},
\end{align*}
which yield 
\begin{align*}
& \frac{\G(1-b_1-n_1)\G(1-b_3-n_2)\G(b_1+b_3-a_1-1)}{\G(1-a_1-n_1-n_2)}\\
&=\frac{\G(1-b_1)\G(1-b_3)\G(b_1+b_3-a_1-1)}{\G(1-a_1)}
\cdot \frac{(a_1,n_1+n_2)}{(b_1,n_1)(b_3,n_2)}.
\end{align*}
Since the other gamma factor is 
$$
\frac{\G(1-b_2)\G(1-b_4)\G(b_2+b_4-a_2-1)}{\G(1-a_2)}
\cdot \frac{(a_2,n_1+n_2)}{(b_2,n_1)(b_4,n_2)},
$$
the integral representation is obtained. 
\end{proof}

\begin{remark}
By acting the group $S_3\times D_4$ on (\ref{eq:int-rep}), 
we have $48$ integral representations of $\hgf{a}{B}{x}$.
\end{remark}

\begin{cor}
\label{cor:Euler}
The Euler number $\chi(T)$ is nine for any $x\in X$.
\end{cor}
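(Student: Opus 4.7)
My plan is a direct topological computation of $\chi(T)$ by additivity of the Euler characteristic, after a monomial coordinate change that partially separates the variables. I would first note that the hypersurface $\{q=0\}$ meets each coordinate hyperplane $\{t_i=0\}$ only in codimension $\geq 2$ (for instance, on $\{t_1=0\}$ the equation reduces to $x_1t_3t_4=0$), so that $T=(\C^*)^4\setminus(\{\ell_{13}=0\}\cup\{\ell_{24}=0\}\cup\{q=0\})$. Since $x_1x_2\ne 0$ on $X$, the substitution $(p,r,u,v):=(t_1,\,t_3,\,x_1/(t_1t_2),\,x_2/(t_3t_4))$ is an automorphism of $(\C^*)^4$ and converts the three defining conditions into $p+r\ne 1$, $u+v\ne 1$, and $\Sigma:\,uvpr-x_1vr-x_2up\ne 0$. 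Because the first two now involve disjoint coordinate pairs, their common complement in $(\C^*)^4$ is a product $A\times B$ with $A:=(\C^*)^2_{(p,r)}\setminus\{p+r=1\}$ and $B:=(\C^*)^2_{(u,v)}\setminus\{u+v=1\}$; a routine calculation gives $\chi(A)=\chi(B)=1$, so $\chi(A\times B)=1$ and $\chi(T)=1-\chi(\Sigma\cap A\times B)$.

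Next I would compute $\chi(\Sigma\cap A\times B)$ by projection onto $A$. Over $(p,r)\in A$ with $\alpha:=x_1/p$, $\beta:=x_2/r$, the fiber is the conic $\{uv-\alpha v-\beta u=0\}\subset(\C^*)^2$ minus the line $\{u+v=1\}$. Parametrizing the conic by $u\in\C\setminus\{0,\alpha\}$ (yielding $\chi=-1$) and removing the two roots of the quadratic $u^2+(\beta-\alpha-1)u+\alpha=0$ gives a generic fiber with $\chi=-3$, which jumps to $\chi=-2$ exactly over the discriminant locus $D:=\{(\beta-\alpha-1)^2=4\alpha\}\subset A$. Stratified additivity then yields
\[
\chi(\Sigma\cap A\times B)=(-3)\chi(A\setminus D)+(-2)\chi(D\cap A)=-3+\chi(D\cap A).
\]

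The curve $D$ admits the rational parametrization $(\alpha,\beta)=((t+1)^2,t^2)$ with $t\in\C\setminus\{0,-1\}$, so $\chi(D)=-1$, and the intersection $D\cap\{p+r=1\}$ is cut out by
\[
t^4+2t^3+(1-x_1-x_2)t^2-2x_2t-x_2=0.
\]
The values $t=0,-1$ give $-x_2$ and $-x_1$ respectively, both nonzero on $X$; and for $x\in X$ the four roots are distinct, so $\chi(D\cap A)=-1-4=-5$, giving $\chi(\Sigma\cap A\times B)=-8$ and $\chi(T)=1-(-8)=9$.

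The principal obstacle is justifying the four-distinct-roots assertion uniformly on $X$. Rather than directly computing the discriminant of this quartic and matching it with a power of $x_1x_2R(x)$, I would invoke Thom's first isotopy lemma: because $X$ is by definition the complement of the singular locus of the arrangement, the family $\{T(x)\}_{x\in X}$ is a topologically locally trivial fibration over the connected open set $X$, so $\chi(T(x))$ is locally constant, and it suffices to carry out the above computation at any single $x\in X$ where the four roots of the quartic are manifestly distinct.
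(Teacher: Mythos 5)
Your computation is correct in its main lines and takes a genuinely different route from the paper. The paper does not compute $\chi(T)$ topologically at all: it invokes Cho's vanishing theorem to conclude that only $H_4(T,\mathcal{L}_u)$ survives, so that $\chi(T)$ equals the rank of that group, identifies this rank with the dimension of the local solution space of $\hge{a}{B}$, and then quotes the holonomic-rank computation of Theorem \ref{th:rank} (a Gr\"obner basis calculation). Your argument is instead a self-contained stratified Euler-characteristic computation on $T$ itself: the monomial torus automorphism splits off the product $A\times B$ with $\chi(A)=\chi(B)=1$, and the remaining hypersurface is fibered over the $(p,r)$-plane with generic fiber of $\chi=-3$ jumping to $-2$ over the discriminant curve $D$. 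I checked the individual steps (the conic fiber $\C\setminus\{0,\alpha\}$, the exclusion of $u=0,\alpha$ from the roots of $u^2+(\beta-\alpha-1)u+\alpha=0$, the parametrization $(\alpha,\beta)=((t+1)^2,t^2)$ of $D$, and the quartic cutting out $D\cap\{p+r=1\}$) and they are all right; in particular your condition $(\beta-\alpha-1)^2=4\alpha$ is literally the same polynomial as the tangency condition $R_2(x_1/t_1,x_2/t_3)=0$ appearing in the paper's proof of Lemma \ref{lem:eigen-spaces}. This buys an independent, purely topological verification of the number $9$, rather than deducing $\chi$ from the holonomic rank.

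The one step that does not hold up as written is the last one. You appeal to Thom's first isotopy lemma ``because $X$ is by definition the complement of the singular locus of the arrangement.'' But $X$ is defined as the complement of the singular locus of the holonomic system, computed in Theorem \ref{th:rank} from the characteristic variety; it is not defined as the equisingularity locus of the family $\{T(x)\}_{x}$, and identifying the two is exactly the content of the assertion you are trying to avoid, namely that the four roots of your quartic collide precisely on $x_1x_2R(x)=0$. Without that identification, the isotopy lemma only gives local triviality over the complement of some unspecified proper closed set $S'$, and $X\setminus S'$ need not be connected, so constancy of $\chi$ on all of $X$ does not follow from one sample point; the argument is circular as stated. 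The repair is cheap: compute the discriminant of the quartic. The paper does the equivalent computation in the proof of Lemma \ref{lem:eigen-spaces}, where eliminating $t_3$ from $q_4(t_1,t_3,x)=0$ and $t_1+t_3=1$ yields a quartic with discriminant $256x_1^3x_2^3R(x_1,x_2)$; since your map $t\mapsto((t+1)^2,t^2)$ is a bijection onto $D$ and your quartic cuts out the same four points of $D\cap\{p+r=1\}$ (with $t=0,-1$ excluded because the values there are $-x_2$ and $-x_1$), distinctness holds for all $x\in X$, and your count $\chi(D\cap A)=-5$, hence $\chi(T)=9$, goes through without any isotopy argument.
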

\begin{proof}
Thanks to Theorem 1 in \cite{C},  only the $4$-th twisted homology 
$H_4(T,\mathcal{L}_u)$ survives. Thus $\chi(T)$ is equal to 
the rank of $H_4(T,\mathcal{L}_u)$. 
There is a natural isomorphism from this homology group to 
the vector space of local solutions to $\hge{a}{B}$ around $x$. 
Hence we have $\chi(T)=9$ for any $x\in X$ by Theorem \ref{th:rank}.
\end{proof}

\begin{cor}
\label{cor:2nd-int}
The solution $F_{10}(x)$ in Theorem \ref{th:fund-sols} is expressed as
$$C_{10}\int_{\De_{10}} u(t,x)dt,$$
where the gamma factor $C_{10}$ is 
\begin{align*}
& \frac{\G(b_1-a_1)\G(b_1-a_2)\G(b_1-a_3)}
{\G(b_1+b_3-a_1-1)\G(b_2+b_4-a_2-1)\G(1-a_3)}\\
\cdot& \frac{1}{\G(b_1-1)\G(b_1-b_2)\G(1-b_3)\G(1-b_4)},
\end{align*}
and $\De_{10}$ is the image of $-\reg_{10}^c(\square_{1}\times \triangle_2)$ 
under the map 
$$\imath_{10}:(s_1,s_2,s_3,s_4) \mapsto (t_1,t_2,t_3,t_4)=(\frac{x_1}{s_1s_2}
,s_2,s_3,s_4).$$
Here $\reg_{10}^c(\square_{1}\times \triangle_2)$ is 
a $4$-chain of the regularization of the direct product of 
$$\square_{1}=\{(s_1,s_3)\in \R^2\mid 0<s_1<1, \ 0<s_3<1\}$$
and $\triangle_2$ associated with   
\begin{align*}
&
s_1^{b_1-2}(1\!-\!s_1)^{-a_3}s_3^{-b_3}(1\!-\!s_3)^{b_1+b_3-a_1-2}
\cdot s_2^{b_1-b_2-1}s_4^{-b_4}(1\!-\! s_2\!-\!s_4)^{b_2+b_4-a_2-2},
\end{align*}
and the point  $x=(x_1,x_2)$ is supposed to be so close to $(0,0)$ that 
the pull back hypersurface $\imath_{10}^*Q$ does not intersect with 
any component of $\reg_{10}^c(\square_1\times \triangle_2)$.
\end{cor}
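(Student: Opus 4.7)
The plan is to pull the integral back via $\imath_{10}$, expand the resulting integrand as a double power series in $x_1$ and $x_2$, integrate term by term, and match the outcome against the series defining $F_{10}(x)$. The argument parallels the proof of Theorem \ref{th:int-rep}.

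First, I would compute $\imath_{10}^*(u(t,x)\,dt)$. In pulling back $dt_1\wedge dt_3\wedge dt_2\wedge dt_4$ to $s$-coordinates, only the contribution from $\pa t_1/\pa s_1 = -x_1/(s_1^2 s_2)$ survives, since the other $ds_2$-term wedges trivially with $dt_2=ds_2$. Substituting $t_1=x_1/(s_1s_2)$, $t_j=s_j$ ($j=2,3,4$) into $u(t,x)$, the factor $t_1^{-b_1}$ produces $x_1^{-b_1}s_1^{b_1}s_2^{b_1}$; combined with the Jacobian and with the sign convention built into $\De_{10}$, this yields
\begin{align*}
\int_{\De_{10}} u(t,x)\,dt
=x_1^{1-b_1}\int_{\reg_{10}^c(\square_{1}\times \triangle_2)}\tilde u(s,x)\,ds,
\end{align*}
where $\tilde u(s,x)$ equals the displayed integrand of Corollary \ref{cor:2nd-int} times the two $x$-dependent factors $\bigl(1-\frac{x_1/(s_1 s_2)}{1-s_3}\bigr)^{b_1+b_3-a_1-2}$ and $\bigl(1-\frac{x_2/(s_3 s_4)}{1-s_1}\bigr)^{-a_3}$.

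Next, I would expand these two factors as power series using $(1-z)^\alpha=\sum_n(-\alpha,n)z^n/n!$. The hypothesis on $x$, together with the construction of the regularization described in Remark \ref{rem:regularization}, guarantees uniform convergence on each component of the cycle, so summation and integration can be interchanged. The coefficient of $x_1^{n_1}x_2^{n_2}$ then factors as a Dirichlet integral over $\square_1$ times one over $\triangle_2$, each evaluated by the standard beta formula in closed form as a ratio of gamma functions involving shifts by $n_1$ and $n_2$.

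Finally, I would convert the $n$-shifted gamma factors into Pochhammer symbols via $\G(z-n)=(-1)^n\G(z)/(1-z,n)$. The accumulated $(-1)^{n_1+n_2}$ signs cancel in pairs, and the Pochhammer factors $(2+a_1-b_1-b_3,n_1)$ and $(a_3,n_2)$ produced by the series expansion cancel against those produced by the gamma transformation. What remains is a single $(n_1,n_2)$-independent gamma prefactor---which I expect to be exactly $1/C_{10}$---multiplied by
\begin{align*}
\frac{(a_1-b_1+1,n_1+n_2)(a_2-b_1+1,n_1+n_2)(a_3-b_1+1,n_1+n_2)}
{(2-b_1,n_1)(b_2-b_1+1,n_1)(1,n_1)(b_3,n_2)(b_4,n_2)(1,n_2)},
\end{align*}
which is precisely the general term of the series defining $F_{10}(x)/x_1^{1-b_1}$. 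The main obstacle is the lengthy but purely mechanical bookkeeping of gamma factors and signs in this last step; otherwise the argument follows the template of Theorem \ref{th:int-rep}.
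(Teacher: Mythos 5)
Your proposal is correct and follows essentially the same route as the paper's proof: pull back via $\imath_{10}$ (with the Jacobian and sign handled exactly as you describe), expand the two $x$-dependent factors binomially, integrate term by term over $\square_1\times\triangle_2$, and convert the shifted gamma factors to Pochhammer symbols, with the cancellations of $(a_1-b_1-b_3+2,n_1)$ and $(a_3,n_2)$ occurring just as you predict. The resulting constant prefactor is indeed $1/C_{10}$ and the surviving series is the general term of $F_{10}(x)/x_1^{1-b_1}$, so nothing is missing.
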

\begin{proof}
Apply the variable change $t=\imath_{10}(s)$  
to $\int_{\De_{10}}u(t,x)dt$ with paying attention to the signs in  
$\De_{01}=\imath_{10}(-\reg_{10}^c(\square_{1}\times \triangle_2))$ 
and $\imath_{10}^*(dt)$.
Then it changes into 
\begin{align*}
 & x_1^{1-b_1}\int_{\reg_{10}^c(\square_1\times \triangle_2)}
 s_1^{b_1-2}s_2^{b_1-b_2-1}s_3^{-b_3}s_4^{-b_4}
 (1-s_1-\frac{x_2}{s_3s_4})^{-a_3}\\
 & \hspace{2cm}\cdot (1-\frac{x_1}{s_1s_2}-s_3)^{b_1+b_3-a_1-2}
 (1-s_2-s_4)^{b_2+b_4-a_2-2}ds,
\end{align*}
where $ds=ds_1\wedge ds_3\wedge ds_2\wedge ds_4$. 
Expand the factors including $x_1$ and $x_2$. Then we have
\begin{align*}
\hspace{-2mm}& (1-s_3-\dfrac{x_1}{s_1s_2})^{b_1+b_3-a_1-2}
=\{(1\!-\! s_3)\cdot(1\!-\!\dfrac{x_1}{(1\!-\! s_3)s_1s_2})\}^{b_1+b_3-a_1-2}\\
\hspace{-2mm}&=
(1-s_3)^{b_1+b_3-a_1-2}\sum_{n_1=0}^\infty\dfrac{(a_1-b_1-b_3+2,n_1)}{n_1!}
\dfrac{x_1^{n_1}}{(1-s_3)^{n_1}s_1^{n_1}s_2^{n_1}},
\end{align*}
\begin{align*}
& (1-s_1-\dfrac{x_2}{s_3s_4})^{-a_3}=(1-s_1)^{-a_3}\cdot
(1-\dfrac{x_2}{(1-s_1)s_3s_4})^{-a_3}\\
&=(1-s_1)^{-a_3}\sum_{n_2=0}^\infty \dfrac{(a_3,n_2)}{n_2!}
\frac{x_2^{n_2}}{(1-s_1)^{n_2}s_3^{n_2}s_4^{n_2}}.
\end{align*}
Thus the integrand is 
\begin{align*}
&  \sum_{(n_1,n_2)\in \N^2}x_1^{n_1}x_2^{n_2}
\dfrac{(a_1-b_1-b_3+2,n_1)}{n_1!}
\dfrac{(a_3,n_2)}{n_2!}
\\
& \hspace{3cm}\cdot 
s_1^{b_1-2-n_1}(1-s_1)^{-a_3-n_2}s_3^{-b_3-n_2}(1-s_3)^{b_1+b_3-a_1-2-n_1}
\\
& \hspace{3cm}\cdot
s_2^{b_1-b_2-1-n_1}s_4^{-b_4-n_2}(1-s_2-s_4)^{b_2+b_4-a_2-2}.
\end{align*}
Integrate this over $\reg_{10}^c(\square_1\times \triangle_2)$, 
then we have 
\begin{align*}
&  \sum_{(n_1,n_2)\in \N^2}x_1^{n_1}x_2^{n_2}
\dfrac{(a_1-b_1-b_3+2,n_1)}{n_1!}
\dfrac{(a_3,n_2)}{n_2!}
\\
& \hspace{2.5cm}\cdot 
\dfrac{\G(b_1-1-n_1)\G(-a_3-n_2+1)}{\G(b_1-a_3-n_1-n_2)}\\
& \hspace{2.5cm}\cdot 
\dfrac{\G(-b_3-n_2+1)\G(b_1+b_3-a_1-n_1-1)}{\G(b_1-a_1-n_1-n_2)}
\\
& \hspace{2.5cm}\cdot
\dfrac{\G(b_1-b_2-n_1)\G(-b_4-n_2+1)\G(b_2+b_4-a_2-1)}{\G(b_1-a_2-n_1-n_2)}.
\end{align*}
By rewriting each Gamma factor including $-n_1$, $-n_2$, $-n_1-n_2$
into the product of the Gamma factor and Pochhammer's symbol,  
we have the second solution in Theorem \ref{th:fund-sols}.
Here note that Pochhammer's symbols $(a_1-b_1-b_3+2,n_1)$ and $(a_3,n_2)$ 
are canceled with $\G(b_1+b_3-a_1-1-n_1)$ and $\G(-a_3-n_2+1)$. 
\end{proof}

\begin{figure}
\begin{center}
\includegraphics[width=10cm]{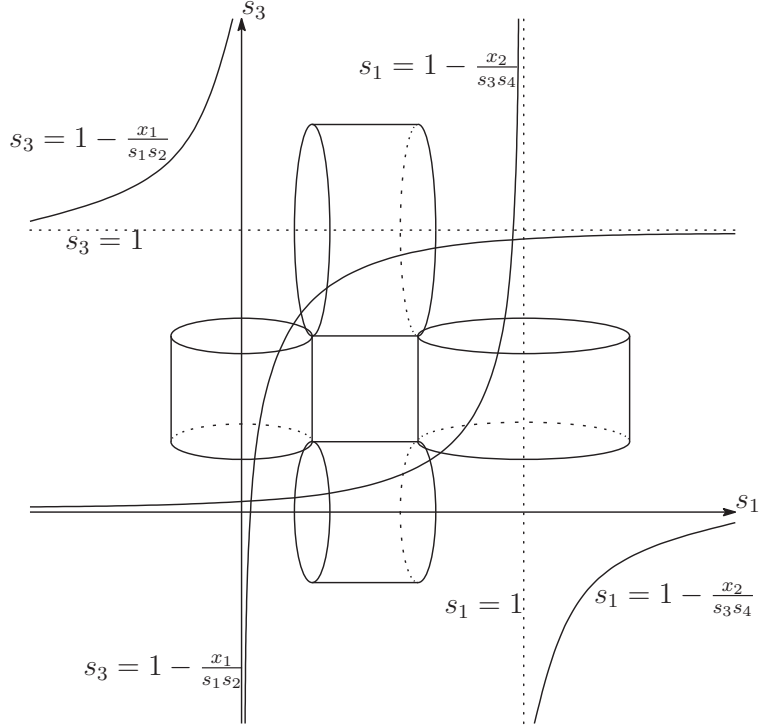}
\end{center}
\caption{Regularization of $\square_{1}$}
\label{fig:D13}
\end{figure}

\begin{remark}
\label{rem:Delta10}
\begin{enumerate}
\item 
A twisted cycle $\De_{10}^u$ is defined by 
the pair of $\De_{10}$ and a branch of $u(t,x)$ on it. 

\item
We can construct $\reg_{10}^c(\square_1\times \triangle_2)$ as
the direct product of $2$-chains given by the 
regularizations of $\square_1$ and $\triangle_2$
associated with  
\begin{align*}
u'_1&=s_1^{b_1-2}(1\!-\!s_1)^{-a_3}s_3^{-b_3}(1\!-\! s_3)^{b_1+b_3-a_1-2},
\\
u'_2&= s_2^{b_1-b_2-1}s_4^{-b_4}(1\!-\! s_2\!-\!s_4)^{b_2+b_4-a_2-2},
\end{align*}
see Figure \ref{fig:D13} for the regularization of $\square_1$. 

\item
By acting $((12),\sigma_{110})\in S_3\times D_4$ 
on Corollary \ref{cor:2nd-int}, and 
using the variable change 
$(t_1,t_2,t_3,t_4)\mapsto (t_2,t_1,t_4,t_3),$
we have an Euler type integral 
$$C_{20}\int_{\De_{20}} u(t,x)dt$$ 
of the solution 
$F_{20}(x)$ in Theorem \ref{th:fund-sols}. 
A twisted cycle $\De_{20}^u$ is defined by 
the pair of $\De_{20}$ and a branch of $u(t,x)$ on it. 
\item 
We have Euler type integrals 
$$C_{0k}\int_{\De_{0k}} u(t,x)dt\quad (k=1,2)
$$ 
of the solutions 
$F_{0k}(x)$ in Theorem \ref{th:fund-sols}, and twisted cycles 
$\De_{0k}^u$ by the action 
$(\mathrm{id},\sigma_{001})\in S_3\times D_4$ and the variable change 
$t\mapsto (t_3,t_4,t_1,t_2)$ to those of $F_{k0}(x)$.
\end{enumerate}
\end{remark}

Let $\imath_{11}$ be an involution given by 
$$
\imath_{11}:T\ni (s_1,s_2,s_3,s_4)\mapsto 
t=\Big(\dfrac{x_1}{s_1s_2},s_2,\dfrac{x_2}{s_3s_4},s_4\Big)\in T.
$$
Note that 
$$\imath_{11}^*(\ell_{13}(t))=q(s),\quad 
\imath_{11}^*(\ell_{24}(t))=\ell_{24}(s),\quad 
\imath_{11}^*(q(t))=\ell_{13}(s).$$
We define a $4$-chain $\De_{11}$ as the image of 
$\reg_{11}^c(\triangle_1\times \triangle_2)$ 
associated with
$$
s_1^{b_1-2}s_3^{b_3-2}(1\!-\!s_1\!-\!s_3)^{-a_3}\cdot 
s_2^{b_1-b_2-1}s_4^{b_3-b_4-1}(1\!-\!s_2\!-\! s_4)^{b_2+b_4-a_2-2}
$$
under the involution $\imath_{11}$.

\begin{cor}
\label{cor:5th-int} 
The solution $F_{11}(x)$ in Theorem \ref{th:fund-sols} is expressed as
$$
C_{11}\cdot \int_{\De_{11}} u(t,x)dt,
$$
where $C_{11}$ is 
$$
C_{11}=
\frac{\G(b_1\!+\!b_3\!-\!a_3\!-\!1) \G(b_1\!+\!b_3\!-\!a_2\!-\!1)}
{\G(b_1\!-\! 1)\G(b_3\!-\! 1)\G(1\!-\! a_3)
\G(b_1\!-\! b_2)\G(b_3\!-\! b_4)\G(b_2\!+\!b_4\!-\! a_2\!-\!1)}.
$$
\end{cor}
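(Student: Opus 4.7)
The plan is to adapt the proof of Corollary \ref{cor:2nd-int} verbatim, replacing the one-variable substitution $\imath_{10}$ by the two-variable involution $\imath_{11}$. I begin by pulling back the integral $\int_{\De_{11}} u(t,x)\,dt$ via $t=\imath_{11}(s)$. Because both $t_1$ and $t_3$ are substituted, the two sign contributions from $\pa t_1/\pa s_1$ and $\pa t_3/\pa s_3$ cancel in the Jacobian, so no orientation flip is required---consistent with $\De_{11}$ being defined as the image of $\reg_{11}^c(\triangle_1\times\triangle_2)$ without a minus sign. Using the pullback identities $\imath_{11}^*\ell_{13}(t)=q(s)$, $\imath_{11}^*\ell_{24}(t)=\ell_{24}(s)$, $\imath_{11}^*q(t)=\ell_{13}(s)$ stated just before the corollary, together with the Jacobian factor $x_1x_2/(s_1^2 s_2 s_3^2 s_4)$, the pulled-back integrand becomes $x_1^{1-b_1}x_2^{1-b_3}$ times the integrand prescribed on $\reg_{11}^c(\triangle_1\times\triangle_2)$ times the residual $x$-dependent factor $(1-x_1/(s_1 s_2)-x_2/(s_3 s_4))^{b_1+b_3-a_1-2}$.

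Next I expand this residual factor as a power series in $x_1,x_2$ via
\[
(1-X-Y)^{b_1+b_3-a_1-2}=\sum_{n_1,n_2\ge 0}\frac{(a_1-b_1-b_3+2,\,n_1+n_2)}{n_1!\,n_2!}X^{n_1}Y^{n_2},
\]
with $X=x_1/(s_1 s_2)$ and $Y=x_2/(s_3 s_4)$. By the same argument as in Remark \ref{rem:regularization}(2), the series converges uniformly on each component of $\reg_{11}^c(\triangle_1\times\triangle_2)$ when $x$ is close to $(0,0)$, so the sum and integral may be interchanged. The $(n_1,n_2)$-integral then factors into a product of two regularized Dirichlet integrals on $\triangle_1$ and $\triangle_2$, giving
\[
\frac{\G(b_1\!-\!1\!-\!n_1)\G(b_3\!-\!1\!-\!n_2)\G(1-a_3)}{\G(b_1\!+\!b_3\!-\!a_3\!-\!1\!-\!n_1\!-\!n_2)}\cdot\frac{\G(b_1\!-\!b_2\!-\!n_1)\G(b_3\!-\!b_4\!-\!n_2)\G(b_2\!+\!b_4\!-\!a_2\!-\!1)}{\G(b_1\!+\!b_3\!-\!a_2\!-\!1\!-\!n_1\!-\!n_2)}.
\]

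Finally, I apply the reflection identity $\G(c-n)=\G(c)(-1)^n/(1-c,n)$ to each of the six Gamma factors shifted by $-n_1$, $-n_2$ or $-n_1-n_2$. The six $(-1)^{n_i}$ signs pair up and cancel, and the prescribed $C_{11}$ is exactly the ratio needed to kill every residual Gamma factor, leaving the Pochhammer coefficient of $x_1^{n_1}x_2^{n_2}$ in the series defining $F_{11}(x)$ in Theorem \ref{th:fund-sols}. Combined with the prefactor $x_1^{1-b_1}x_2^{1-b_3}$ this reproduces $F_{11}(x)$ exactly. The main obstacle is purely bookkeeping: tracking signs through the Jacobian, the ordering $dt=dt_1\wedge dt_3\wedge dt_2\wedge dt_4$ versus $ds=ds_1\wedge ds_3\wedge ds_2\wedge ds_4$, and the $(-1)^{n_i}$ factors from the reflection formula, and then verifying that the specific shape of $C_{11}$ matches the residual Gamma factors. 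No new idea beyond those in the proofs of Theorem \ref{th:int-rep} and Corollary \ref{cor:2nd-int} is needed.
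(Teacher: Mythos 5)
Your proposal is correct and follows essentially the same route as the paper: both proofs hinge on the substitution $\imath_{11}$ together with the pullback identities $\imath_{11}^*\ell_{13}=q$, $\imath_{11}^*q=\ell_{13}$, $\imath_{11}^*\ell_{24}=\ell_{24}$, and both arrive at the same pulled-back integral with prefactor $x_1^{1-b_1}x_2^{1-b_3}$. The only difference is one of economy: the paper observes that the pulled-back integrand is again exactly of the canonical Euler form of Theorem \ref{th:int-rep} with the shifted parameters $a_i-b_1-b_3+2$ and $(2-b_1,\,b_2-b_1+1;\,2-b_3,\,b_4-b_3+1)$, so it reads off both the series and the Gamma factor $C_{11}$ from that theorem plus the $S_3$-symmetry in $(a_1,a_2,a_3)$, whereas you re-derive the double expansion of $q(s)^{b_1+b_3-a_1-2}$, the Dirichlet integrals over $\triangle_1\times\triangle_2$, and the reflection-formula bookkeeping by hand --- a longer but equally valid verification whose intermediate Gamma factors and sign cancellations all check out.
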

\begin{proof}
By the variable change $\imath_{11}$, 
the integral changes to 
\begin{align*}
& C_{11}\cdot x_1^{1-b_1}x_2^{1-b_3}
\int_{\reg_{11}^c(\triangle_1\times \triangle_2)} 
s_1^{b_1-2}
s_2^{b_1-b_2-1}
s_3^{b_3-2}
s_4^{b_3-b_4-1}\\ &\hspace{40mm}
\cdot 
\ell_{13}(s)^{-a_3}q(s)^{b_1+b_3-a_1-2}\ell_{24}(s)^{b_2+b_4-a_2-2} ds.
\end{align*}
We have only to consider the change of parameters. 
Here note that $\hgf{a}{B}{x}$ is symmetric with respect to $a_1,a_2,a_3$. 
\end{proof}
\begin{remark}
\label{rem:Delta11}
\begin{enumerate}
\item A twisted cycle $\De_{11}^u$ is defined by the pair of $\De_{11}$ and 
a branch of $u(t,x)$ on it.

\item By using the action of $\sigma_{110}\in D_4$ 
on Corollary \ref{cor:5th-int}, we have 
an Euler type integral of the solution $F_{22}(x)$ 
in Theorem \ref{th:fund-sols}. 
Its integrand can be transformed 
into $u(t,x)$ by the variable change $t \mapsto(t_3,t_4,t_1,t_2)$, and 
a twisted cycle $\De_{22}^u$ is obtained.
\end{enumerate}
\end{remark}

Let $\imath_{21}$ be the variable change 
$$
\imath_{21}:(s_1,s_2,s_3,s_4)\mapsto 
t=\Big(s_1,\dfrac{x_1}{s_1s_2},\dfrac{x_2}{s_3s_4},s_3\Big)\in T.
$$
We define a $4$-chain $\De_{21}$ as the image of 
$-\reg_{21}^c(\square_1\times \triangle_2)$ associated with 
$$s_1^{b_2\!-\!b_1\!-\!1}(1\!-\!s_1)^{b_1\!+\!b_3\!-\! a_1\!-\!2}
s_3^{b_3\!-\!b_4\!-\!1}(1\!-\!s_3)^{b_2\!+\!b_4\!-\!a_2\!-\!2}\cdot 
s_2^{b_2\!-\!2}s_4^{b_3\!-\!2}(1\!-\! s_2\!-\! s_4)^{-a_3}
$$
under the map $\imath_{21}$.

\begin{cor}
\label{cor:6th-int} 
The solution $F_{21}(x)$ in Theorem \ref{th:fund-sols} is expressed as
$$
C_{21}\cdot \int_{\De_{21}} u(t,x)dt,
$$
where the gamma factor $C_{21}$ is
\begin{align*}
&\frac{\G(b_2\!+\!b_3\!-\!a_1\!-\!1) \G(b_2\!+\!b_3\!-\!a_2\!-\!1) 
\G(b_2\!+\!b_3\!-\!a_3\!-\!1)}
{\G(b_1\!+\!b_3\!-\! a_1\!-\!1)\G(b_2\!+\!b_4\!-\! a_2\!-\!1)\G(1\!-\!a_3)}\\
&\cdot \frac{1}{\G(b_2\!-\! 1)\G(b_3\!-\! 1)\G(b_2\!-\! b_1)\G(b_3\!-\! b_4)}.
\end{align*}
\end{cor}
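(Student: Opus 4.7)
The proof will mirror the strategy of the proofs of Corollaries \ref{cor:2nd-int} and \ref{cor:5th-int}, applying the variable change $\imath_{21}$ to the integral $\int_{\De_{21}} u(t,x)\,dt$ and matching the resulting series to $F_{21}(x)$.

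First, I would carry out the change of variables $t=\imath_{21}(s)$. Since only $t_2$ and $t_3$ depend non-trivially on $s$, a direct Jacobian calculation shows $dt_1\wedge dt_2\wedge dt_3\wedge dt_4 = -x_1 x_2 s_1^{-1} s_2^{-2} s_3^{-1} s_4^{-2}\,ds_1\wedge ds_2\wedge ds_3\wedge ds_4$, so that the sign convention $dt=dt_1\wedge dt_3\wedge dt_2\wedge dt_4$ and $ds=ds_1\wedge ds_3\wedge ds_2\wedge ds_4$ together with the $-\reg_{21}^c(\square_1\times\triangle_2)$ in the definition of $\De_{21}$ yield a positively oriented integral. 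Combining the Jacobian with $\prod t_j^{-b_j}$ extracts the prefactor $x_1^{1-b_2}x_2^{1-b_3}$ and rewrites the monomial factors exactly as $s_1^{b_2-b_1-1}s_2^{b_2-2}s_3^{b_3-b_4-1}s_4^{b_3-2}$, matching the density used to regularize $\square_1\times\triangle_2$.

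Next, I note the crucial point that $\imath_{21}^*(q(t))=1-s_2-s_4=\ell_{24}(s)$, so the factor $q(t)^{-a_3}$ contributes $(1-s_2-s_4)^{-a_3}$ with no $x$-dependence, while
\[
\ell_{13}(t)=(1-s_1)\Bigl(1-\frac{x_2}{(1-s_1)s_3 s_4}\Bigr),\qquad
\ell_{24}(t)=(1-s_3)\Bigl(1-\frac{x_1}{(1-s_3)s_1 s_2}\Bigr).
\]
Exactly as in the proof of Corollary \ref{cor:2nd-int}, I expand each of these in binomial series in $x_2$ and $x_1$ respectively, producing Pochhammer symbols $(a_1-b_1-b_3+2,n_2)$ and $(a_2-b_2-b_4+2,n_1)$, which later cancel against Gamma factors. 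After interchanging sum and integral, the integration decouples into an $s_1$-Beta integral, an $s_3$-Beta integral, and a Dirichlet integral over $\triangle_2$ in $s_2,s_4$.

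Finally, I apply the reflection-formula identity $\G(\a-n)=\G(\a)/[(-1)^n(1-\a,n)]$ to each Gamma factor involving $-n_1$, $-n_2$, or $-n_1-n_2$. The alternating signs cancel pairwise, the Pochhammer symbols $(a_1-b_1-b_3+2,n_2)$ and $(a_2-b_2-b_4+2,n_1)$ from the expansions cancel those appearing in the denominator of the Beta integrals, and the surviving factors assemble into exactly the series defining $F_{21}(x)$ in Theorem \ref{th:fund-sols}, multiplied by a constant product of Gamma functions. Reading off the reciprocal of this product yields the stated expression for $C_{21}$. The main obstacle is the bookkeeping in the two simultaneous binomial expansions together with the orientation sign in the Jacobian; the rest is a repetition of the symbolic manipulations used in Corollaries \ref{cor:2nd-int} and \ref{cor:5th-int}. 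Alternatively, one could deduce the statement by applying $\sigma_{100}\in D_4$ to Corollary \ref{cor:5th-int} and then performing a supplementary variable change $(t_1,t_2,t_3,t_4)\mapsto(t_2,t_1,t_3,t_4)$ to restore the integrand to the canonical form $u(t,x)$, in the spirit of Remark \ref{rem:Delta11}(2).
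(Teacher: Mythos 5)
Your main argument is correct and is essentially the paper's own proof: apply the change of variables $\imath_{21}$ with attention to the two orientation signs, then repeat the expansion/Beta-integral/Gamma-rewriting routine of Corollary \ref{cor:2nd-int}; your Jacobian, the pullbacks of $\ell_{13}$, $\ell_{24}$, $q$, and the cancelling Pochhammer symbols $(a_1-b_1-b_3+2,n_2)$ and $(a_2-b_2-b_4+2,n_1)$ all check out and reproduce the stated $C_{21}$. Only the closing aside is off: $\sigma_{100}$ alone cannot be realized by the swap $(t_1,t_2,t_3,t_4)\mapsto(t_2,t_1,t_3,t_4)$, since that swap turns the factors $1-t_1-t_3$ and $1-t_2-t_4$ into $1-t_2-t_3$ and $1-t_1-t_4$ rather than restoring $u(t,x)$ (this is why the paper pairs $\sigma_{110}$ with $(t_1,t_2,t_3,t_4)\mapsto(t_2,t_1,t_4,t_3)$), but this does not affect your main proof.
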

\begin{proof}
Apply the variable change $\imath_{21}$ to the integral with paying attention
to the signs in 
$\De_{21}=\imath_{21}(-\reg_{21}^c(\square_1\times \triangle_2))$ and 
$\imath_{21}^*(dt)$. 
Then it becomes
\begin{align*}
&x_1^{1-b_2}x_2^{1-b_3}
\int_{\reg_{21}^c(\square_1\times \triangle_2)} 
s_1^{b_2\!-\! b_1\!-\!1}s_2^{b_2\!-\!2}s_3^{b_3\!-\!b_4\!-\!1}s_4^{b_3\!-\!2}
(1\!-\!s_1\!-\!\frac{x_2}{s_3s_4})^{b_1\!+\! b_3\!-\! a_1\!-\!2}\\
&\hspace{3.3cm}\cdot (1-\frac{x_1}{s_1s_2}-s_3)^{b_2+b_4-a_2-2}
(1-s_2-s_4)^{-a_3}ds.
\end{align*}
To express this integral by the hypergeometric function, 
follow the proof of Corollary \ref{cor:2nd-int}. 
\end{proof}

\begin{remark}
\label{rem:Delta21}
\begin{enumerate}
\item A twisted cycle $\De_{21}^u$ is defined by the pair of $\De_{21}$ and 
a branch of $u(t,x)$ on it.

\item By using the action of $\sigma_{110}\in D_4$ 
on Corollary \ref{cor:5th-int}, we have 
an Euler type integral of the solution $F_{12}(x)$ 
in Theorem \ref{th:fund-sols}. 
Its integrand can be transformed 
into $u(t,x)$ by the variable change $t \mapsto(t_3,t_4,t_1,t_2)$, and 
a twisted cycle $\De_{12}^u$ is obtained.
\end{enumerate}
\end{remark}

\section{Fundamental group of the complement of the singular locus}
\label{sec:pi-1}
Recall that the base point $\dot x=(\e_1,\e_2)\in X$ is chosen so that 
$\e_1$ and $\e_2$ are small positive real numbers satisfying $0<\e_2<\e_1$. 
Let $\rho_1$ and $\rho_2$ be loops given by 
\begin{align*}
& \rho_1:[0,1]\ni r \mapsto (\e_1\exp(2\pi\sqrt{-1}r),\e_2)\in X,\\
& \rho_2:[0,1]\ni r \mapsto (\e_1,\e_2\exp(2\pi\sqrt{-1}r))\in X.
\end{align*}
Let $L$ be the complex line passing through $(0,0)$ and $\dot x$, 
which is expressed as
$$L=\{(\e_1r,\e_2r)\in \C^2\mid r\in \C\}.$$
We regard $r\in \C$ as a coordinate of $L$.
The intersection of $L$ and the curve $S_3:R(x)=0$ consists of three points
$P_1,P_2,P_3$. Let $P_1$ be the point whose coordinate $r_1\in \C$ is real.
Let $\rho_3$ be a loop in $L$ starting from $\dot x$, 
approaching $P_1$ along the real axis, 
turning once around $P_1$ positively, and tracing back to $\dot x$. 

We use the same symbol $\rho$ for the element of the 
fundamental group $\pi_1(X,\dot x)$ represented by a loop $\rho$ with 
terminal $\dot x$.
The structure of $\pi_1(X,\dot x)$ is studied in \cite{KMO1}.
It is shown that these loops generate $\pi_1(X,\dot x)$, and that 
it is isomorphic to a group generated by three elements with 
four relations among them. 
In this paper, we use a relation 
$$\rho_1\cdot \rho_2=\rho_2\cdot \rho_1$$
and that in the following proposition. 
\begin{proposition}
\label{prop:reduction}
The element
$$\rho_3\cdot (\rho_2\cdot \rho_3\cdot \rho_2^{-1})\cdot
(\rho_2^2\cdot \rho_3\cdot\rho_2^{-2})$$
commutes with $\rho_2$ as elements of $\pi_1(X,\dot x)$, 
where $\rho_2\cdot\rho_3$ is the loop joining $\rho_2$ to $\rho_3$.
Moreover, this loop is homotopic to the loop 
$$\rho_2':[0,1]\ni r \mapsto (1+(\e_1-1)\exp(2\pi\sqrt{-1}r),0)\in 
\{(x_1,x_2)\mid x_2=0\} \subset \C^2$$
in the space 
$$\{(x_1,x_2)\in \C^2\mid x_1R(x_1,x_2)\ne 0\}.$$
\end{proposition}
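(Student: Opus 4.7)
The plan is to establish the two assertions in turn via the geometry near the point $(1,0)$, where $R=0$ meets the axis $x_2=0$ with intersection multiplicity $3$. This multiplicity is apparent from the parametrization $R=0\ni(x_1,x_2)=(t^3,(1-t)^3)$ near $t=1$, where $x_1-1\sim 3(t-1)$ while $x_2\sim -(t-1)^3$. I would work with the projection $\pi:X\to\C^*$, $(x_1,x_2)\mapsto x_2$, whose fiber over $\e_2$ is $F_{\e_2}=\C\setminus\{0,q_0,q_1,q_2\}$ with $q_k\approx 1-3\w^k\e_2^{1/3}$, so that the structure of $\pi_1(X,\dot x)$ can be analyzed via Zariski--van Kampen.

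For the homotopy statement, I would first deform $\rho_3$ inside $X$ so that it lies in the fiber $F_{\e_2}$, realizing it as a small meridian $\beta_0$ around the root $q_0$ that lies on $L$ as $P_1$. Since the braid monodromy around $x_2=0$ cyclically permutes $q_0,q_1,q_2$, the conjugates $\rho_2^k\rho_3\rho_2^{-k}$ become meridians $\beta_k$ around $q_k$ for $k=1,2$ after an appropriate path-correction. Their product $\beta_0\beta_1\beta_2$ is therefore the ``big loop'' in $F_{\e_2}$ encircling the cluster $\{q_0,q_1,q_2\}$ but not $0$. Because we now work in the larger space $Y=\{x_1R\neq 0\}$, in which the axis $x_2=0$ is no longer removed, we may deform $\e_2\to 0$ along positive reals; the three points $q_k$ coalesce at $(1,0)$ and the big loop collapses continuously to a single loop in $\{x_2=0\}$ around $x_1=1$, which is exactly $\rho_2'$ up to the canonical connecting path.

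For the commutativity in $\pi_1(X,\dot x)$, set $P=\rho_3(\rho_2\rho_3\rho_2^{-1})(\rho_2^2\rho_3\rho_2^{-2})=\beta_0\beta_1\beta_2$ and $\beta_3=\rho_2^3\rho_3\rho_2^{-3}$; then $\rho_2 P\rho_2^{-1}=\beta_1\beta_2\beta_3$, so the claim is equivalent to the identity
$$\beta_3=(\beta_1\beta_2)^{-1}\beta_0(\beta_1\beta_2)$$
in $\pi_1(X)$. This is the braid-theoretic manifestation of the fact that $\rho_2^3$ realizes the full twist $\De^2=(\sigma_1\sigma_2)^3$ of the three strands $q_0,q_1,q_2$ as they rotate by $2\pi$ about their barycenter $x_1=1$; the central element $\De^2$ of the braid group on three strands acts on $\pi_1(F_{\e_2})$ by conjugation by the product of the three meridians, and restricting this action to $\beta_0$ yields the displayed identity.

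The main obstacle is the careful bookkeeping of base-point paths so that the chosen meridian generators $\beta_0,\beta_1,\beta_2$ satisfy both the cyclic action of $\rho_2$ and the full-twist identity with the correct cyclic ordering, and so that the connecting path used in the $Y$-deformation identifies $P$ with $\rho_2'$ on the nose. Both ingredients ultimately trace back to the triple tangency of $R=0$ with $x_2=0$ at $(1,0)$, which governs the local monodromy and dictates the precise form of the relation.
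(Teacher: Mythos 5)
Your proposal is correct, and its geometric core is the same as the paper's: identify $\rho_3$, $\rho_2\rho_3\rho_2^{-1}$, $\rho_2^2\rho_3\rho_2^{-2}$ with positively oriented meridians of the three branches of $R=0$ that are cyclically permuted by $\rho_2$, recognize their product as the boundary of a disc containing all three points, and let the three points coalesce at $(1,0)$ to obtain $\rho_2'$. The implementations differ in two ways. First, the paper organizes everything around the pencil of lines $L$ through the origin (rotating $L$ along $\rho_2$ permutes $P_1,P_2,P_3$, and the degeneration flattens $L$ onto $x_2=0$), whereas you use the fibration $x_2=\mathrm{const}$ and the braid monodromy of the roots $q_k\approx 1-3\w^k\e_2^{1/3}$; this costs you the extra (routine) step of sliding the meridian of $P_1\in L$ to the meridian of $q_0$ along the real branch of $R=0$. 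Second, and more substantively, the paper infers commutativity from the homotopy of the product onto $\rho_2'\subset\{x_2=0\}$, which on its face only lives in $\pi_1$ of the larger space $\{x_1R\ne 0\}$; your full-twist identity $\rho_2^3\beta_0\rho_2^{-3}=(\beta_0\beta_1\beta_2)^{-1}\beta_0(\beta_0\beta_1\beta_2)$ is carried out inside $\pi_1(X,\dot x)$, where the relation is actually asserted, so on this point your route is the more careful one. The price is the sign and tail bookkeeping you flag: the conjugation must come out as $P^{-1}\beta_0P$ rather than $P\beta_0P^{-1}$, which is forced by the positive rotation $q_k\mapsto q_{k+1}$ together with the paper's left-to-right composition convention, and is standard to verify.
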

\begin{proof}
Let the line $L$ move along the loop $\rho_2$. 
By tracing the deformation of $\rho_3$, we can show that 
$\rho_3$ is changed into a loop turning around once the point $P_2$ positively.
Since the base point $\dot x$ is moved along $\rho_2$, 
this loop is equal to $\rho_2\cdot \rho_3\cdot\rho_2^{-1}$. 
Similarly,  we can show that 
$\rho_2^2\cdot \rho_3\cdot\rho_2^{-2}$ is a loop turning around once 
the point $P_3$ positively.
Thus the loop $\rho_3\cdot (\rho_2\cdot \rho_3\cdot\rho_2^{-1})\cdot
(\rho_2^2\cdot \rho_3\cdot\rho_2^{-2})$ turns around once the all points 
$P_1,P_2$ and $P_3$ positively. 
If we deform the line $L$ to  $x_2=0$, then $P_1$, $P_2$ and $P_3$ 
confluent to the point $(1,0)$. 
Hence  we can see that this loop is homotopic to $\rho_2'$.
It is clear that $\rho_2'$ commutes with $\rho_2$ since 
$\rho_2'$ is in the line $x_2=0$.  
\end{proof}

\begin{remark}
\label{rem:Artin}
The relation
$$\rho_2\cdot [
\rho_3 (\rho_2 \rho_3 \rho_2^{-1})
(\rho_2^2 \rho_3\rho_2^{-2})]=
[\rho_3 (\rho_2 \rho_3 \rho_2^{-1})
(\rho_2^2 \rho_3\rho_2^{-2})]\cdot \rho_2
$$
is equivalent to 
$$(\rho_2\rho_3)^3=(\rho_3\rho_2)^3$$
as elements of $\pi_1(X,\dot x)$. 
We also have 
$$(\rho_1\rho_3)^3=(\rho_3\rho_1)^3.$$
An Artin group of infinite type is defined by three generators  
$\varrho_1$, $\varrho_2$ and 
$\varrho_3$ together with three relations among them:
$$
\varrho_2\varrho_1=\varrho_1\varrho_2,\quad 
(\varrho_3\varrho_1)^3=(\varrho_1\varrho_3)^3,\quad 
(\varrho_3\varrho_2)^3=(\varrho_2\varrho_3)^3.
$$
\end{remark}

\section{Monodromy representation}
\label{sec:Monodromy}
In this section, we assume that 
\begin{equation}
\label{eq:non-integral}
\begin{matrix}
a_i,\  
\ b_j,\ b_k,\ b_1-b_2,\ b_3-b_4,\ 
a_i-b_j,\ a_i-b_k, \\
a_i-b_j-b_k,\ a_1+a_2+a_3-b_1-b_2-b_3-b_4
\end{matrix}
\notin \Z
\end{equation}
for $i=1,2,3$ and $j=1,2$ and $k=3,4.$ 
Note that the condition (\ref{eq:non-integral}) is stable under the 
action of $S_3\times D_4$.  

We array the fundamental system of solutions 
to $\hge{a}{B}$ on $U$ in Theorem \ref{th:fund-sols} 
as a column vector  
$$\mathbf{F}^g(x)
=\tr\big(g_{00}F_{00}(x),\dots,
g_{jk}F_{jk}(x), \dots \big)$$
in the order (\ref{eq:order}), 
where 
$g_{jk}$ are non-zero constants.
Let $M_i^g$ $(i=1,2,3)$ be the circuit matrix along $\rho_i$  with respect to 
this vector valued function. That is,  
this vector valued function is transformed into 
$$M_i^g \mathbf{F}^g(x)$$
by the analytic continuation along the loop $\rho_i$. 

\begin{lemma}
\label{lem:M1M2}
We have 
\begin{align*}
M_1^g&=\diag(1,\b_1^{-1},\b_2^{-1},1,\b_1^{-1},\b_2^{-1},
1,\b_1^{-1},\b_2^{-1}),\\
M_2^g&=\diag(1,1,1,\b_3^{-1},\b_3^{-1},\b_3^{-1},
\b_4^{-1},\b_4^{-1},\b_4^{-1}),
\end{align*}
for any $g=(g_{00},g_{10},\dots,g_{22})$, where $\b_j=e^{2\pi\sqrt{-1}b_j}$ 
$(j=1,\dots,4)$ and 
$\diag(c_1,\dots,c_m)$ denotes the diagonal matrix of size $m$ 
with diagonal entries $c_1,\dots,c_m$.

\end{lemma}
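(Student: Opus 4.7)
The plan is to exploit the factorization of each fundamental solution $F_{jk}(x)$ in Theorem \ref{th:fund-sols} as the product of a purely multi-valued prefactor $x_1^{1-b_{1j}}x_2^{1-b_{2k}}$ and a single-valued hypergeometric series. Adopt the index convention $b_{1,0}=b_{2,0}=1$, $b_{1,1}=b_1$, $b_{1,2}=b_2$, $b_{2,1}=b_3$, $b_{2,2}=b_4$, compatible with the identification $\{0,1,2\}=\Z/(3\Z)$ in Theorem \ref{th:fund-sols}, and write
$$F_{jk}(x) = x_1^{1-b_{1j}}x_2^{1-b_{2k}}\, H_{jk}(x),$$
where $H_{jk}(x)$ is the hypergeometric series on the right in (\ref{eq:fund-sols}), whose constant term is $1$. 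The same argument proving Proposition \ref{prop:converge} applies verbatim to the shifted parameters in each $H_{jk}$, so every $H_{jk}$ converges absolutely on the domain $\D$. Since $R(0,0)=1\ne 0$, for $0<\e_2<\e_1$ sufficiently small (which is built into our choice of $\dot x$) both loops $\rho_1$ and $\rho_2$ lie inside $\D\cap X$, and each $H_{jk}$ is therefore holomorphic and single-valued on a simply-connected neighborhood of each loop.

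Analytic continuation along $\rho_1$ sends $x_1$ once counterclockwise around $0$ with $x_2$ held fixed, so $x_1^{1-b_{1j}}$ acquires the factor $e^{2\pi\sqrt{-1}(1-b_{1j})}=\b_{1j}^{-1}$, where $\b_{1,0}=1$, $\b_{1,1}=\b_1$, $\b_{1,2}=\b_2$, while $x_2^{1-b_{2k}}$ and $H_{jk}(x)$ return to themselves; hence $F_{jk}\mapsto \b_{1j}^{-1}F_{jk}$. Analogously $\rho_2$ scales $F_{jk}$ by $\b_{2k}^{-1}$, with $\b_{2,0}=1$, $\b_{2,1}=\b_3$, $\b_{2,2}=\b_4$. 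The nonzero constants $g_{jk}$ cancel in $g_{jk}F_{jk}\mapsto \b_{1j}^{-1}g_{jk}F_{jk}$, so both circuit matrices are diagonal and independent of $g$; reading off the nine diagonal entries in the order (\ref{eq:order}) reproduces exactly the formulas claimed for $M_1^g$ and $M_2^g$.

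There is no serious obstacle in the argument; the only items that merit a moment of care are the verification that the shifted hypergeometric series $H_{jk}$ still converge on $\D$ (immediate from the proof of Proposition \ref{prop:converge}, which depends only on the asymptotic shape of the coefficient ratio) and that $\rho_1,\rho_2$ indeed avoid every component of the singular locus $S$ (immediate for small $\e_1,\e_2$ since $\rho_i$ is contained in $\{x_1x_2\ne 0\}$ and $R(0,0)=1$ keeps it away from the cubic).
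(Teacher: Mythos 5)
Your proof is correct and follows essentially the same route as the paper, which simply declares the lemma ``a direct consequence'' of Theorem \ref{th:fund-sols}; you have merely filled in the details (the decomposition $F_{jk}=x_1^{1-b_{1j}}x_2^{1-b_{2k}}H_{jk}$, the single-valuedness of $H_{jk}$ on the Reinhardt domain $\D$, and the monodromy factor $e^{2\pi\sqrt{-1}(1-b_{1j})}=\b_{1j}^{-1}$ of the prefactor). The bookkeeping with $b_{1,0}=b_{2,0}=1$ and the ordering (\ref{eq:order}) is consistent with the paper, so nothing further is needed.
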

\begin{proof}
It is a direct consequences of Theorem \ref{th:fund-sols} that 
the circuit matrices $M_1$ and $M_2$ take the forms in this lemma.
\end{proof}

The essential problem is the determination of the circuit matrix $M^g_3$.
At first, we study eigenspaces of $M^g_3$.
\begin{lemma}
\label{lem:eigen-spaces}
Suppose that the circuit matrix $M^g_3$ is diagonalizable.
Then its eigenvalues are $1$ and $\l(\ne1)$. 
The $\l$-eigenspace is $1$ dimensional and 
the $1$-eigenspace is $8$ dimensional.
\end{lemma}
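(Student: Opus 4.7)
The plan is to localize the problem at $P_1$ and then combine a Picard--Lefschetz count of vanishing cycles with a non-triviality argument.

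First I would observe that, since $\e_1\ne \e_2$, the line $L$ avoids the node $(-1,-1)$ of the cubic $R(x)=0$, so $P_1$ is a smooth point of this cubic and $\rho_3$ is a small positively oriented circle around $P_1$ in $L$. Restricting $\hge{a}{B}$ to $L$ then yields a Fuchsian ODE of rank $9$ with a regular singularity at $P_1$, and $M_3^g$ coincides, up to conjugation by a change of basis, with the local monodromy of this restricted ODE around $P_1$.

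Next I would produce an $8$-dimensional invariant subspace of $M_3^g$ via the integral representations of Section~\ref{sec:Int-rep}. By Corollary~\ref{cor:Euler}, the twisted homology $H_4(T,\mathcal{L}_u)$ has rank $9$ and pairs non-degenerately with the solution space. Since $R(x)$ occurs to the first power in the reduced singular locus (recall $J=\la x_1^4x_2^4R(x)\ra$ from the proof of Theorem~\ref{th:rank}) and $P_1$ is a smooth point of $R=0$, a transversal approach of $x$ to $P_1$ within $L$ corresponds to a Morse-type degeneration of the integrand $u(t,x)$, producing exactly one vanishing cycle $\gamma\in H_4(T,\mathcal{L}_u)$. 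The Picard--Lefschetz formula then implies that every twisted cycle whose intersection number with $\gamma$ vanishes is invariant under $M_3^g$; such cycles span a subspace of codimension one. Dualising via the pairing between twisted homology and solutions, we obtain an $8$-dimensional subspace of the $1$-eigenspace of $M_3^g$.

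Finally I would exclude the possibility $M_3^g=I$. If all nine eigenvalues equalled $1$, every local solution would be single-valued on a punctured disk around $P_1$ in $L$; combined with the regular singularity of $\hge{a}{B}$ along each component of $S$ and a Hartogs-type extension across the smooth codimension-one point $P_1$, this would force every solution to extend holomorphically near $P_1$, contradicting the statement in Theorem~\ref{th:rank} that $R(x)=0$ is a component of the singular locus. Under the diagonalizability hypothesis, the remaining eigenvalue $\l$ must then satisfy $\l\ne 1$ with multiplicity exactly one, while $1$ has multiplicity $8$.

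The main obstacle is the Picard--Lefschetz step: precisely, one must verify that the degeneration of $u(t,x)$ as $x\to P_1$ transversally within $L$ is a \emph{single} non-degenerate Morse degeneration, so that a unique vanishing cycle $\gamma$ is produced. This should follow from $R(x)$ appearing to the first power in the reduced discriminant and from smoothness of $R=0$ at $P_1$, but a fully rigorous derivation requires an explicit local analysis of the critical locus of an appropriate morsification on $T$ parametrised by $x$ near $P_1$.
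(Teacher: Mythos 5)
Your overall strategy (one vanishing cycle, a codimension-one fixed subspace, plus a non-triviality argument) is the same as the paper's, but two of your steps have genuine gaps.

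The step you flag yourself is indeed the heart of the matter, and ``$R$ appears to the first power in the reduced discriminant, hence a single non-degenerate Morse degeneration'' is not a proof. The degeneration here is not the ambient variety $T$ acquiring an ordinary double point but a bounded chamber of a real arrangement collapsing, and the paper resolves it by an explicit construction: it fibers $T$ over the $(t_1,t_3)$-plane, identifies the chamber $\OO_{24}$ in the fibre that collapses exactly on the quartic $q_4(t_1,t_3,x)=0$, and shows that as $x\to P_1$ the line $t_1+t_3=1$ becomes simply tangent to this quartic, so the region $\OO_{13}$ over which the family is assembled shrinks away. This exhibits a single $4$-cell $\OO\cong\R^4$ that is lost in the limit, whence $\chi(T')=\chi(T)-1=8$ by Corollary \ref{cor:Euler}; the $8$-dimensionality of the $1$-eigenspace is then read off as $\dim\mathrm{Sol}(U_{P_1})=\chi(T')=8$ (solutions extending holomorphically across $P_1$ are fixed by $M_3^g$), rather than through a Picard--Lefschetz formula whose hypotheses you have not verified. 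Without a substitute for this explicit tangency analysis, your codimension-one invariant subspace is not established.

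Your argument that $M_3^g\ne I_9$ is also not valid as stated. Trivial local monodromy around a smooth point of $R(x)=0$, together with regular singularity, only forces the local solutions to be meromorphic there; they could still have poles, or $P_1$ could be an apparent singularity, and in either case $R(x)=0$ would remain a component of the singular locus computed from the characteristic variety in Theorem \ref{th:rank}. So no contradiction arises from single-valuedness alone. The paper instead obtains the eigenvalue $\l\ne 1$ directly from the cycle it constructed: $\OO$ is carried back to itself by the deformation along $\rho_3$, while the branch of the multivalued integrand $u(t,x)$ loaded on it is multiplied by a non-trivial constant under the non-integrality assumptions, so $\int_{\OO}u(t,x)\,dt$ is an honest $\l$-eigenvector with $\l\ne1$ (the value $\l=\b_1\b_2\b_3\b_4/(\a_1\a_2\a_3)$ is pinned down later in Proposition \ref{prop:H-entry}). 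You would need some analogous concrete mechanism to rule out $\l=1$.
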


\begin{proof}
We construct a vanishing cycle $\OO$ when $x=(x_1,x_2)\in X$ 
approaches to $P_1=(x'_1,x'_2)\in L\cap S$ along a part of the loop $\rho_3$.
The integral $\int_{\OO}u(t,x)dt$ becomes a $\l$-eigenvector.

We regard the space $T$ in (\ref{eq:int-space}) as a fiber bundle 
over the base $(t_1,t_3)$-space with the fiber $(t_2,t_4)$-space.
At first, we fix $x$ and a generic $(t_1,t_3)$. 
Let $\OO_{24}$ be the real $2$-dimensional chamber surrounded by $\ell_{24}(t)=0$ 
and $q(t)=0$ in the $(t_2,t_4)$-space and including $\triangle$ 
in Figure \ref{fig:cycle}. 
By moving $(t_1,t_3)$ in a $2$-chain, 
we construct a locally finite $4$-chain as a family of $\OO_{24}$ 
over $(t_1,t_3)$.
Note that $\ell_{24}(t)=0$ is tangent to $q(t)=0$ if and only if 
$R_2(x_1/t_1,x_2/t_3)=0$, where 
$$R_2(y_1,y_2)=(1-y_1-y_2)^2-4y_1y_2=y_1^2+y_2^2+1-2y_1y_2-2y_1-2y_2.$$
Since $t_1\ne 0$ and $t_3\ne 0$ in the base space, the 
condition $R_2(x_1/t_1,x_2/t_3)=0$ is equivalent to 
$$q_4(t_1,t_3,x)=(t_1t_3-x_1t_3-x_2t_1)^2-4x_1x_2t_1t_3=0.$$
Note that the chamber $\OO_{24}$ vanishes  
on this curve for the fixed $(x_1,x_2)$ in the base space $(t_1,t_3)$.  
It is easy to see that this quartic curve in $\C^2$ has 
a cusp singular point at $(t_1,t_3)=(0,0)$ for any $x\in X$ and that  
this curve intersects the line $t_i=0$ $(i=1,3)$ 
only at $(0,0)$ in $\C^2$.
We consider the intersection of $q_4(t_1,t_3,x)=0$ and line 
$L_{13}:t_1+t_3=1$.
By eliminating $t_3$ from $q_4(t_1,t_3,x)=0$ and $t_1+t_3=1$, 
we have 
\begin{align*}
&
t_1^4-2(x_1-x_2+1)t_1^3+(x_1^2+x_2^2+1+2x_1x_2+4x_1-2x_2)t_1^2
\\
&-2x_1(x_1+x_2+1)t_1+x_1^2=0.
\end{align*}
Since its discriminant with respect to $t_1$ is 
$$256x_1^3x_2^3R(x_1,x_2),$$
$q_4(t_1,t_3,x)=0$ and $t_1+t_3=1$  
intersect at four distinct points for any $x\in X$; see Figure \ref{fig:Q4}.
\begin{figure}
\begin{center}
\includegraphics[width=10cm]{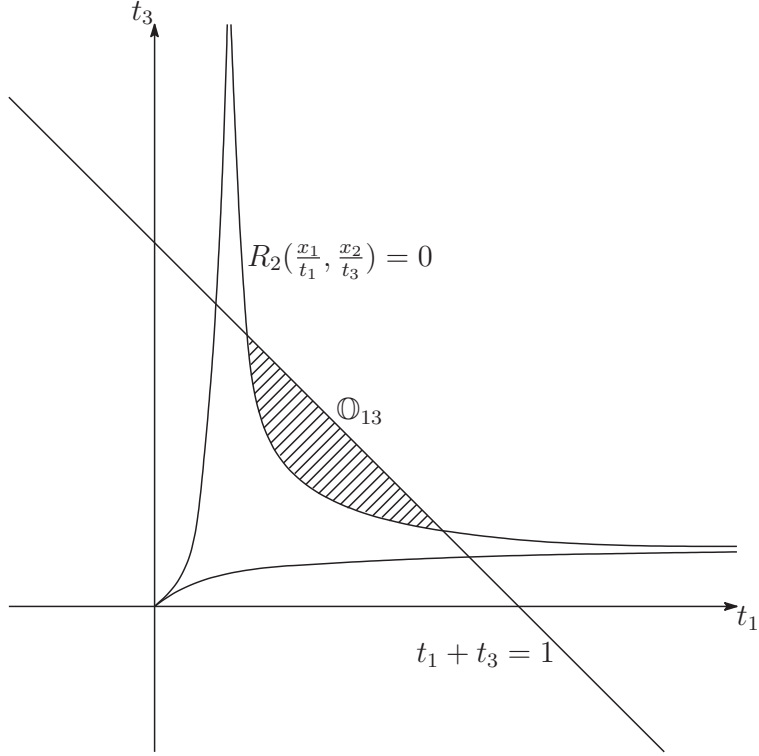}
\end{center}
\caption{$q_4(t_1,t_3,x)=0$ in $(t_1,t_3)$ space}
\label{fig:Q4}
\end{figure}

We set 
\begin{equation}
\label{eq:vanishing}
\OO=\bigcup_{(t_1,t_3)\in \OO_{13}} \OO_{24},
\end{equation}
where the region $\OO_{13}$ in the $(t_1,t_3)$-space 
is surrounded by the line $t_1+t_3=0$ and $q_{4}(t_1,t_3,x)=0$, 
see Figure \ref{fig:Q4}. 
We claim that this is a cycle as a locally finite chain.
Its boundary consists of 
$$\bigcup_{(t_1,t_3)\in \pa \OO_{13}} \OO_{24},\quad 
\bigcup_{(t_1,t_3)\in \OO_{13}} \pa\OO_{24}.$$
Since $\OO_{24}$ is a locally finite cycle, $\pa\OO_{24}=0$. 
The boundary component of $\OO_{13}$ in $t_1+t_3=1$ 
vanishes as a locally finite chain. 
Since $\OO_{24}$ vanishes over the boundary component of $\OO_{13}$
in $q_4(t_1,t_3,x)=0$, 
$\bigcup_{(t_1,t_3)\in \pa \OO_{13}} \OO_{24}=0$.
Hence $\OO$ is a locally finite cycle.

We consider the limit as $x$ to $P_1=(x'_1,x'_2)$. 
If $x=x'$ then the line $t_1+t_3=1$ tangents to $q_4(t_1,t_3,x')=0$ 
and we can show that $\OO_{13}$ vanishes; see Figure \ref{fig:deg-Q4}.
Hence $\OO$ is a required cycle.
\begin{figure}
\begin{center}
\includegraphics[width=10cm]{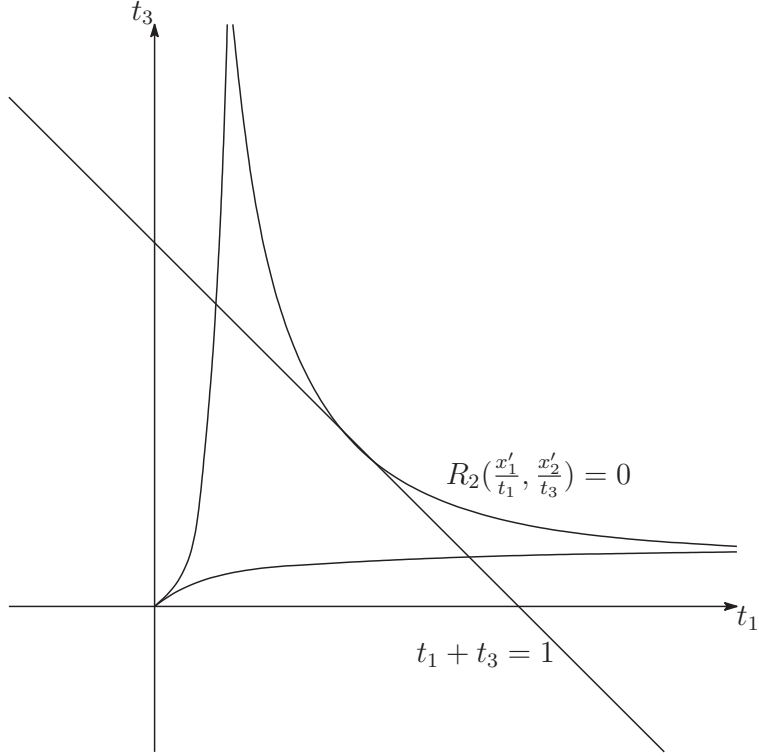}
\end{center}
\caption{$q_4(t_1,t_3,x')=0$ in $(t_1,t_3)$ space}
\label{fig:deg-Q4}
\end{figure}

By tracing the movement of $\OO$  along the loop $\rho_3$, we see that 
the deformed $\OO$ coincides with the initial $\OO$. 
Since the branch of $u(t,x)$ changes by the continuation along $\rho_3$, 
the integral $\int_{\OO}u(t,x)dt$  is multiplied $\l\ne 1$ 
under suitable non-integrable conditions on the parameters $a$ and $B$.
Thus the circuit matrix $M^g_3$ has an eigenvalue $\l$ different from $1$. 

Finally, we show that the $1$-eigenspace of $M^g_3$ is $8$ dimensional.
Let $\mathrm{Sol}(U_{P_1})$ be the vector space of 
holomorphic solutions to $\hge{a}{B}$, 
which can be extended to a neighborhood $U_{P_1}$ of 
$P_1=(x'_1,x'_2)$ in $\C^2$. 
It is sufficient to show that $\mathrm{Sol}(U_{P_1})$ is $8$ dimensional. 
By following the proof of Corollary \ref{cor:Euler}, 
we can show that $\dim \mathrm{Sol}(U_{P_1})$ 
is equal to the Euler number $\chi(T')$ of the space $T'$ for 
$x=(x'_1,x'_2)$. 
On the other hand, we have $\chi(T)=9$ for any $x\in X$ 
by Corollary \ref{cor:Euler}.  
Since the space $T'$ loses a $4$-chain $\OO$ homeomorphic to $\R^4$ 
from the space $T$, we have $\chi(T')=9-1=8$.
\end{proof}

\begin{remark}
\label{rem:doubly}
For a generic point $x$ of $R(x)=0$, 
$q_4(t_1,1-t_1,x)=0$ has a double root and two simple roots.
At the node $(x_1,x_2)=(-1,-1)$ of $R(x)=0$, 
$q_4(t_1,1-t_1,x)=0$ has two double roots, and 
$t_1+t_3=1$ becomes a bi-tangent of $q_4(t_1,t_3,x)=0$ with 
tangent points $(t_1,t_3)=(-\w,-\w^2),(-\w^2,-\w)$. 
For $x=(1,0)$ or $(0,1)$, $q_4(t_1,t_3,x)=0$ degenerates to 
a product of duplicate lines $(t_1-1)^2t_3^2$ or $t_1^2(t_3-1)^2$, 
and it intersects  the line $t_1+t_3=1$ at 
the quadruple point $(t_1,t_3)=(1,0)$ or $(0,1)$, respectively. 
\end{remark}

Next we normalize $g$ so that $M^g_3$ admits a simple expression.
For this purpose, 
we introduce the intersection pairing $\cI$ between 
the twisted homology groups 
$H_4(T,\mathcal{L}_u)$ and $H_4^{lf}(T,\mathcal{L}_u^\vee)$, where 
$\mathcal{L}_u^\vee$ is the dual local system of $\mathcal{L}_u$.
Here $\ ^\vee$  denotes 
the sign change operator 
$$
\nu(a_1,a_2,a_3;b_1,b_2,b_3,b_4)^\vee=\nu(-a_1,-a_2,-a_3;-b_1,-b_2,-b_3,-b_4)
$$
for any function $\nu$ of parameters. For example, we have 
$\a_i^\vee=1/\a_i$ and $u(t,x)^\vee=1/u(t,x)$. 
Under our assumption (\ref{eq:non-integral}), 
the natural map from $H_4(T,\mathcal{L}_u)$ to 
$H_4^{lf}(T,\mathcal{L}_u^\vee)$ is isomorphic, and 
its inverse $\reg$ is defined. 
We can regard the intersection pairing as defined between 
$H_4(T,\mathcal{L}_u)$ and $H_4(T,\mathcal{L}_u^\vee)$.
There exist twisted cycles $\De_{jk}^{g,u}$ 
given by 4-chain $\De_{jk}^g$ with a branch of $(u,t)$
such that 
$$\int_{\De_{jk}^g}u(t,x)dt=g_{jk}F_{jk}(x)$$ 
for any $0\le j,k\le 2$.
We define a $9\times 9$ matrix $H^g$ by 
$$H^g=\frac{1}{\cI(\De_{00}^{g,u},{\De_{00}^{g,u}}^\vee)}
\left(\cI(\De_{jk}^{g,u},{\De_{j'k'}^{g,u}}^\vee)\right)_{(jk),(j'k')},
$$
where $(jk)$ and $(j'k')$ are arrayed 
in the order (\ref{eq:order}).
We compute $\cI(\De_{jk}^{u},{\De_{j'k'}^{u}}^\vee)$ in 
\S\ref{sec:Intersection}.
In this section, we treat the entries $h_{10},h_{20},\dots, h_{22}$ of $H$ 
as indeterminate for the moment. 

\begin{lemma}
\label{lem:H-exist}
The matrix $H^g$  satisfies
$$M_i^g H^g \tr(M_i^g)^\vee=H^g\quad (i=1,2,3),$$
and takes the form $\diag(1,h^g_{10},\dots,h^g_{22})$.
\end{lemma}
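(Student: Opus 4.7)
The plan is to prove the two assertions—the conjugation identities $M_i^g H^g \tr(M_i^g)^\vee = H^g$ for $i=1,2,3$ and the diagonal form of $H^g$—in turn, deducing the latter from the former restricted to $i=1,2$.

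For the conjugation identities, I will invoke the standard principle that a twisted intersection pairing is invariant under monodromy. The pairing $\cI$ between $H_4(T,\mathcal{L}_u)$ and $H_4(T,\mathcal{L}_u^\vee)$ is topological and therefore preserved by the $\pi_1(X,\dot x)$-action on both factors. Since $\mathbf{F}^g(x)$ is the column vector of integrals of $u(t,x)\,dt$ over the twisted cycles $\De_{jk}^{g,u}$, the analytic continuation of $\mathbf{F}^g$ along $\rho_i$ by $M_i^g$ pins down the monodromy matrix on those cycles; applying the same reasoning to the dual local system $\mathcal{L}_u^\vee$, whose monodromy is obtained by applying the sign-change involution $\nu\mapsto\nu^\vee$ on parameters to $M_i^g$, yields the dual monodromy matrix $(M_i^g)^\vee$. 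Writing the invariance of $\cI$ in matrix form with respect to our bases and dividing through by the normalizing scalar $\cI(\De_{00}^{g,u},{\De_{00}^{g,u}}^\vee)$ gives the asserted identities for $i=1,2,3$.

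To deduce the diagonal form, I specialize to $i=1,2$, where $M_i^g$ are diagonal by Lemma \ref{lem:M1M2}. Since both sides are diagonal conjugations, the identity for $i=1$ reads componentwise
$$\b_{j}^{-1}\,\b_{j'}\,H^g_{(jk),(j'k')}=H^g_{(jk),(j'k')}\qquad(\b_0:=1),$$
forcing $H^g_{(jk),(j'k')}=0$ whenever $\b_j\neq\b_{j'}$. The non-integrality conditions (\ref{eq:non-integral}) on $b_1,b_2,b_1-b_2$ render $1,\b_1,\b_2$ pairwise distinct, so $j=j'$; the analogous argument using $M_2^g$ and the conditions on $b_3,b_4,b_3-b_4$ forces $k=k'$. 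Hence $H^g$ is diagonal, and its $(00)$-entry equals $1$ by definition of $H^g$.

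The main obstacle I anticipate is the rigorous justification of the monodromy invariance of $\cI$ in the presence of regularization, together with the verification that the dual-local-system monodromy is precisely $(M_i^g)^\vee$ in the parameter-involution sense. Once these foundational points are settled, what remains is the diagonal linear-algebra argument above, driven entirely by the non-integrality hypothesis (\ref{eq:non-integral}).
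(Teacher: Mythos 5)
Your proposal is correct and follows essentially the same route as the paper: monodromy invariance of the twisted intersection pairing (which the paper delegates to the proof of Lemma 4 in \cite{MY}) gives the identities for all $i$, and the diagonal form then follows from the explicit diagonal matrices $M_1^g,M_2^g$ of Lemma \ref{lem:M1M2} together with the non-integrality conditions. Your componentwise deduction that $\b_j\ne\b_{j'}$ forces the off-diagonal entries to vanish is exactly the argument the paper leaves implicit.
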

\begin{proof}
By the local triviality of the intersection form, 
we can show that the relations 
is satisfied by any circuit matrices $M_\rho^g$ along loops 
$\rho$ with terminal $\dot x$; 
refer to  the proof of Lemma 4 in \cite{MY} for details.
By the property 
$$M_i^g H^g \tr(M_i^g)^\vee=H^g\quad (i=1,2),$$
for $M_1$ and $M_2$ given in Lemma \ref{lem:M1M2}, 
$H$ should be diagonal under the assumption (\ref{eq:non-integral}).
It is clear that its top-left entry is $1$.
\end{proof}

\begin{lemma}
\label{lem:M3}
Suppose that the eigenvalue $\l$ of $M^g_3$ is different from $1$ 
and that $v=(v_{00},\dots,v_{22})$ is a $\l$-eigenvector of $M^g_3$. 
\begin{enumerate}
\item The eigenspace of $M_3^g$ of eigenvalue $1$ is characterized as 
$$\{w\in \C^9\mid w H^g \tr v^\vee =0\}.$$
\item The vector $v$ satisfies $vH^g \tr v^\vee\ne0$.
\item 
The circuit matrix $M^g_3$ is expressed as
$$M^g_3=I_9-\frac{(1-\l)}{v H^g\tr v^\vee }H^g\tr v^\vee v.$$
\item 
No entries of $v$ vanish.
\end{enumerate}
\end{lemma}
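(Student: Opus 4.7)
The plan is to prove (3) first, deduce (1) and (2) from it, and then obtain (4) from (3) together with the irreducibility of the monodromy representation established in \cite{KMO1}.

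The crux of (3) will be to identify the column vector $c:=H^g\,\tr v^\vee$ as a right $\lambda$-eigenvector of $M_3^g$. In the dual local system $\mathcal{L}_u^\vee$ obtained by negating the parameters, the eigenvalue $\lambda$ becomes $\lambda^\vee=1/\lambda$, since $\lambda$ is a monomial in the exponentials $e^{2\pi\sqrt{-1}a_j}$ and $e^{2\pi\sqrt{-1}b_k}$. Correspondingly, the sign-changed left eigen-relation reads $v^\vee(M_3^g)^\vee=\lambda^{-1}v^\vee$, equivalently $\tr(M_3^g)^\vee\,\tr v^\vee=\lambda^{-1}\tr v^\vee$. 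Substituting this into the invariance $M_3^g H^g\,\tr(M_3^g)^\vee=H^g$ from Lemma \ref{lem:H-exist}, I obtain
\[
M_3^g c=M_3^g H^g\,\tr v^\vee=H^g\bigl(\tr(M_3^g)^\vee\bigr)^{-1}\tr v^\vee=\lambda\,H^g\tr v^\vee=\lambda c.
\]
Since $H^g$ is nondegenerate (by the standard non-degeneracy of the twisted intersection pairing under (\ref{eq:non-integral})) and $v\neq 0$, it follows that $c\neq 0$, so $c$ spans the one-dimensional $\lambda$-eigenspace of $M_3^g$.

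With $c$ in hand, the rest of (1), (2), (3) is essentially formal. For (1), any $w$ with $wM_3^g=w$ satisfies $wc=(wM_3^g)c/\lambda=wc/\lambda$, forcing $wc=0$; the $1$-eigenspace is thus contained in the $8$-dimensional hyperplane $\{w\in\C^9\mid wc=0\}$, and equals it by the dimension count from Lemma \ref{lem:eigen-spaces}. For (2), the $\lambda$- and $1$-eigenspaces are linearly independent, so $v$ lies outside the $1$-eigenspace and hence $vc\neq 0$ by (1). For (3), I verify that $N:=I_9-\tfrac{1-\lambda}{vc}\,c\,v$ satisfies $vN=\lambda v$ and $wN=w$ for all $w$ with $wc=0$; since these conditions pin down the action of $M_3^g$ on a basis of $\C^9$, it follows that $N=M_3^g$.

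For (4), suppose some $v_q=0$. Then the explicit formula in (3) shows that the $q$th column of $M_3^g$ equals $e_q$, so $e_q$ is a right $1$-eigenvector of $M_3^g$. On the other hand, by Lemma \ref{lem:M1M2} each of $M_1^g$ and $M_2^g$ is diagonal, so every standard basis vector is automatically their right eigenvector. Consequently $\C e_q$ would be a one-dimensional subspace invariant under the full monodromy representation, contradicting the irreducibility established in \cite{KMO1} under the non-integrality condition (\ref{eq:non-integral}). The hardest step will be setting up the eigenvalue duality $v^\vee(M_3^g)^\vee=\lambda^{-1}v^\vee$ cleanly, since this hinges on how the sign-change operation $\vee$ intertwines the monodromy on $\mathcal{L}_u$ with that on $\mathcal{L}_u^\vee$, which must be justified via the topological invariance of the vanishing cycle $\OO$ underlying $v$.
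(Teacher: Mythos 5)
Your parts (1)--(3) are essentially the paper's own argument, only reorganized: the paper inserts the invariance $M_3^gH^g\tr(M_3^g)^\vee=H^g$ of Lemma \ref{lem:H-exist} into $wH^g\tr v^\vee$ and uses $(vM_3^g)^\vee=\l^\vee v^\vee$ to get $(1-\l^\vee)\,wH^g\tr v^\vee=0$, while you package the same information as the statement that $c=H^g\tr v^\vee$ is a right eigenvector. The ``eigenvalue duality'' you single out as the hardest step is in fact immediate (the operator $^\vee$ is the substitution $a\mapsto -a$, $b\mapsto -b$, hence a homomorphism, so it may be applied termwise to $vM_3^g=\l v$); the only care needed is that your normalization $\l^\vee=\l^{-1}$ is not yet available here, since $\l$ is only computed in Proposition \ref{prop:H-entry}. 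The paper sidesteps this by observing that all one needs is $\l^\vee\ne 1$, which follows formally from $(\l^\vee)^\vee=\l\ne 1$; your argument survives the same repair because every step only uses that the eigenvalue of $c$ differs from $1$. Your derivation of (2) from the equality in (1) is a clean variant of the paper's nondegeneracy argument, and your verification of the reflection formula in (3) is identical to the paper's.

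The genuine divergence, and the one real weakness, is in (4). The paper does \emph{not} invoke the irreducibility of the rank-$9$ representation from \cite{KMO1}: it shows that if an entry of $v$ vanishes then the corresponding unit vector is a common eigenvector of $M_1^g$, $M_2^g$, $M_3^g$, so a single solution $F_{jk}$ spans a monodromy-invariant line; restricting that solution to $x_2=0$ (after stripping the power factor) yields a reducible monodromy for a $_3F_2$ equation, contradicting Proposition 3.3 of \cite{BH} under the non-integrality assumption (\ref{eq:non-integral}). Your appeal to \cite{KMO1} is problematic for two reasons: that paper is in preparation and its ``non-integral conditions'' are not specified here, and, more seriously, its irreducibility statement is most naturally proved \emph{from} the explicit circuit matrices of Theorem \ref{th:monod}, which rest on the present lemma---so you risk circularity. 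The self-contained reduction to the one-variable Beukers--Heckman criterion is the safer route, and it is the same restriction-to-the-axes device the paper reuses to determine $\l$ and $H$ in Proposition \ref{prop:H-entry}.
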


\begin{proof}
(1) Let $w$ be a $1$-eigenvector of $M^g_3$. Then we have
$$w H^g \tr v^\vee =w [M^g_3 H^g \tr(M^g_3)^\vee ]\tr v^\vee =
(w M^g_3) H^g \tr(v M^g_3)^\vee 
= \l^\vee w H^g \tr v^\vee,$$
$$(1-\l^\vee) w H^g \tr v^\vee=0.$$
Since $(\l^\vee)^\vee=\l$ and $\l\ne 1$,  
the factor $1-\l^\vee$ does not vanish. 
Hence we have $w H^g \tr v^\vee=0$.

\noindent(2) It is known that 
the intersection form is a perfect pairing between twisted homology groups. 
Since the matrix $H$ corresponds to the intersection matrix,   
it is non-degenerate. 
If $vH^g \tr v^\vee=0$ then $u H^g \tr v^\vee=0$ for any $u\in \C^9$ 
by the result (1) and Lemma \ref{lem:eigen-spaces}.
This means that $H$ degenerates. 

\noindent(3) Put 
$$M'=I_9-\frac{1-\l}{v H^g\tr v^\vee }H^g\tr v^\vee v.$$
Since we have
$$
v M'=v+ \frac{1-\l}{v H^g\tr v^\vee }v{H^g}\! \tr v^\vee v=\l v,\ 
w M'=w+ \frac{1-\l}{v H^g\tr v^\vee }w{H^g}\! \tr v^\vee v=w
$$
for any $w$ in $\{w\in \C^9\mid w H^g \tr v^\vee =0\}$, 
the eigenvalues and eigenspaces of $M'$ and $M^g_3$ coincide.
Hence $M'$ is equal to $M^g_3.$

\noindent(4) 
Assume that an entry $v_i$ of $v$ vanishes. 
Then the unit vector $\ex_i$ is a $1$-eigenvector of $M^g_3$ since 
$\ex_i H \tr v^\vee=0$. Moreover, $\ex_i$ is an eigenvector of 
$M^g_1$ and that of $M^g_2$.
Hence the space spanned by $\ex_i$ is invariant subspace of 
the monodromy representation. 
By removing the power function $x_1^{1-b_j}x_2^{1-b_k}$ 
from corresponding solution to $\ex_i$ and 
restricting it to $x_2=0$, we have the differential equation 
$_3\cF_2$ associated with it. This has an invariant subspace of 
the monodromy representation. By Proposition 3.3 in \cite{BH}, 
some of $a_i$, $a_i-b_1$, $a_i-b_2$ belong to $\Z$. 
It contradicts to our assumption (\ref{eq:non-integral}).
\end{proof}

We normalize $g=(g_{00},g_{10},\dots, g_{22})$ so that 
the $\l$-eigenvector $v$ of $M^g_3$ 
becomes $\one=(1,\dots,1)$. From now on, we fix $g$, and 
we use symbols  
$\mathbf{F}(x)$, 
$H=(1,h_{10},\dots,h_{22})$, $M_i$ $(i=1,2,3)$ for this fixed $g$.
Note that $M_1$ and $M_2$ are given in Lemma \ref{lem:M1M2} and that
$$M_3=I_9-\frac{1-\l}{\one H\tr\one}H\tr \one\one,$$
where we regard $\l$ and entries of $H$ as indeterminate.

Finally, we determine them by considering the restrictions $\mathbf{F}(x)$ 
to $x_i=0$ ($i=1,2$).

\begin{proposition}
\label{prop:H-entry}
The eigenvalue $\l$ and the diagonal entries of 
$H$ are 
\begin{align*}
\l&=\frac{\b_1\b_2\b_3\b_4}{\a_1\a_2\a_3},\\
h_{10}&= \frac{(\a_1-\b_1)(\a_2-\b_1)(\a_3-\b_1)(\b_2-1)}
{(\a_1-1)(\a_2-1)(\a_3-1)\b_1(\b_1-\b_2)},\\
h_{20}&= \frac{(\a_1-\b_2)(\a_2-\b_2)(\a_3-\b_2)(\b_1-1)}
{(\a_1-1)(\a_2-1)(\a_3-1)\b_2(\b_2-\b_1)},
\end{align*}
\begin{align*}
h_{01}&= \frac{(\a_1-\b_3)(\a_2-\b_3)(\a_3-\b_3)(\b_4-1)}
{(\a_1-1)(\a_2-1)(\a_3-1)\b_3(\b_3-\b_4)},\\
h_{11}&= \frac{(\a_1-\b_1\b_3)(\a_2-\b_1\b_3)(\a_3-\b_1\b_3)(\b_2-1)(\b_4-1)}
{(\a_1-1)(\a_2-1)(\a_3-1)\b_1\b_3(\b_1-\b_2)(\b_3-\b_4)},\\
h_{21}&= \frac{(\a_1-\b_2\b_3)(\a_2-\b_2\b_3)(\a_3-\b_2\b_3)(\b_1-1)(\b_4-1)}
{(\a_1-1)(\a_2-1)(\a_3-1)\b_2\b_3(\b_2-\b_1)(\b_3-\b_4)},
\end{align*}
\begin{align*}
h_{02}&= \frac{(\a_1-\b_4)(\a_2-\b_4)(\a_3-\b_4)(\b_3-1)}
{(\a_1-1)(\a_2-1)(\a_3-1)\b_4(\b_4-\b_3)},\\
h_{12}&= \frac{(\a_1-\b_1\b_4)(\a_2-\b_1\b_4)(\a_3-\b_1\b_4)(\b_2-1)(\b_3-1)}
{(\a_1-1)(\a_2-1)(\a_3-1)\b_1\b_4(\b_1-\b_2)(\b_4-\b_3)},\\
h_{22}&= \frac{(\a_1-\b_2\b_4)(\a_2-\b_2\b_4)(\a_3-\b_2\b_4)(\b_1-1)(\b_3-1)}
{(\a_1-1)(\a_2-1)(\a_3-1)\b_2\b_4(\b_2-\b_1)(\b_4-\b_3)}.
\end{align*}
\end{proposition}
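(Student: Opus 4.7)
The plan is to reduce the problem to the monodromy of the generalized hypergeometric equation ${}_3\cF_2$ at its regular singular point $x=1$ (which is explicitly known from Beukers--Heckman \cite{BH}), by restricting the system $\hge{a}{B}$ to each coordinate axis $x_i=0$, following the template of \cite{M2} for Appell's system $\cF_C$.

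The triple product $M_3' := M_3\cdot(M_2 M_3 M_2^{-1})\cdot(M_2^2 M_3 M_2^{-2})$ commutes with $M_2$ by Proposition~\ref{prop:reduction}, so it is block-diagonal on the three $M_2$-eigenspaces $V_k = \mathrm{span}\{\ex_{(0,k)}, \ex_{(1,k)}, \ex_{(2,k)}\}$ ($k=0,1,2$). The second half of Proposition~\ref{prop:reduction} identifies $M_3'$, via the homotopy in $\{x_1R(x)\ne 0\}$, with the monodromy of a loop $\rho_2'$ encircling $x_1=1$ in $\{x_2=0\}$; lifting $\rho_2'$ slightly to $x_2=\e$ shows that on $V_k$ the factor $x_2^{1-b_{2k}}$ (with $b_{20}:=1,\,b_{21}:=b_3,\,b_{22}:=b_4$) is single-valued along the loop, so after stripping this factor the remaining series restricts to a fundamental system of ${}_3\cF_2(a_1-b_{2k}+1,\,a_2-b_{2k}+1,\,a_3-b_{2k}+1;\,b_1,b_2;\,x_1)$. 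Hence $M_3'|_{V_k}$ is the $x_1=1$-monodromy of this shifted ${}_3\cF_2$, a pseudo-reflection whose non-trivial eigenvalue, by the Fuchs relation, is
\[
\Lambda_k=\frac{\b_1\b_2\b_{2k}^3}{\a_1\a_2\a_3}.
\]
An analogous construction with $\rho_1'$ gives a triple product $M_3''$ block-diagonal on the $M_1$-eigenspaces $W_j$, with non-trivial eigenvalues $\Lambda'_j=\b_3\b_4\b_{1j}^3/(\a_1\a_2\a_3)$.

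Substituting $M_3 = I_9 - cH\tr\one\,\one$ (with $c = (1-\l)/(\one H\tr\one)$) and expanding the triple product, I project each outer-product contribution $u^{(i)}v^{(j)} := M_2^i H\tr\one\cdot\one M_2^{-j}$ onto $V_k$; all twenty-seven terms collapse to multiples of the single direction $u_k v_k$, yielding
\[
M_3'|_{V_k} = I_{V_k} - \gamma_k\,u_k\,v_k,\qquad u_k = \tr(h_{0k}, h_{1k}, h_{2k}),\ v_k = (1,1,1),
\]
with $h_{00}:=1$ and $\gamma_k$ an explicit cubic in $c$ whose coefficients involve $\b_3,\b_4$ and the partial sums $S_l = \sum_j h_{jl}$. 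Equating $1-\gamma_k S_k = \Lambda_k$ for $k=0,1,2$ (and the analogous equations from $M_3''$) yields in particular $\l^3 = \det M_3' = \Lambda_0\Lambda_1\Lambda_2 = (\b_1\b_2\b_3\b_4/(\a_1\a_2\a_3))^3$, determining $\l$ up to a cube root that is fixed by a local analysis using the vanishing cycle $\OO$ of (\ref{eq:vanishing}). To extract each $h_{jk}$ individually, I match the eigenvector direction $u_k$ with the explicit vanishing-cycle direction of the shifted ${}_3F_2$ at $x_1=1$ in the gauged basis (computable via the standard intersection-number formulas for ${}_3F_2$, cf.\ \cite{M1}): the pure-axis restrictions recover $h_{j0}$ and $h_{0k}$, while the cross terms $h_{jk}$ ($j,k\ge 1$) follow from the shifted restrictions, the $S_3\times D_4$-symmetry of Proposition~\ref{prop:E-symmetry} halving the case analysis.

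The main obstacle is the algebraic bookkeeping in the triple-product expansion combined with the simultaneous matching of eigenvalue and eigenvector data: the cubic $\gamma_k$ becomes transparent only after separating the $M_2$-eigenvalue contributions $\mu_k^{-j}$, and enforcing consistency of the gauge factors $g_{jk}$ across all six restrictions with the global normalization $v=\one$ demands careful tracking.
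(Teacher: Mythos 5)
Your proposal follows essentially the same route as the paper: reduce via the commuting triple product $\rho_3(\rho_2\rho_3\rho_2^{-1})(\rho_2^2\rho_3\rho_2^{-2})$ to the $x_1=1$ monodromy of the (shifted) ${}_3\cF_2$ on each $M_2$-block, read off the $h_{jk}$ (up to the column normalizations fixed by the companion loop built from $\rho_1$) from the known reflection form of that $3\times3$ circuit matrix as in \cite{M2}, and obtain $\l$ from $\det N_2=\l^3=\prod_k\Lambda_k$. The only cosmetic differences are that the paper resolves the cube-root ambiguity by noting $(1,1,1,0,\dots,0)$ is an eigenvector of $N_2$ rather than by a vanishing-cycle argument, and it invokes Proposition 3.1 of \cite{M2} for the block form instead of expanding the triple product by hand.
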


\begin{remark}
\label{rem:S3D4onH} The group $S_3\times D_4$ naturally acts on the entries 
$h_{jk}$ of $H$.  This is compatible with that on 
the fundamental solutions $F_{jk}(x)$.
\end{remark}
\begin{proof}
Consider the loop $\rho_3\cdot(\rho_2\cdot \rho_3\cdot \rho_2^{-1})
\cdot (\rho_2^2\cdot \rho_3\cdot \rho_2^{-2})$.
By Proposition \ref{prop:reduction}, it commutes with $\rho_2$ and 
homotopic to a loop $\rho_2'$ turning $x_1=1$ once positively 
in the space $\{(x_1,0)\in \C^2\mid x_1\ne0,1\}$.
Let $N_2$ be the circuit matrix  along this loop with respect to 
$\mathbf{F}(x)$.
Since it commutes with $M_2$, it is a block diagonal matrix 
with respect to the $(3,3,3)$-partition. 
Since the restrictions of 
$F_{j0}(x)$ $(j=0,1,2)$ 
to $x_2=0$ reduce to a fundamental system $_3\cF_2\begin{pmatrix}
a_1,a_2,a_3\\ b_1,b_2
\end{pmatrix}$, the top-left block of $N_2$ coincides with 
the circuit matrix of $\rho_2'$  with respect to their restrictions.
This $3\times3$ circuit matrix is expressed in Proposition 3.1 of \cite{M2} as
$$I_3-\frac{1-\l'}{\one' H' \tr \one'} H'\tr \one'\one',$$
where 
$I_3$ is the unit matrix of size $3$, $\l'=\dfrac{\b_1\b_2}{\a_1\a_2\a_3}$, 
$\one'=(1,1,1)$ and $H'=\diag(1,h_{10},h_{20})$ for 
given $h_{10}$ and $h_{20}$ in this proposition.
By following the proof of Proposition 4.1 of \cite{M2},  
we can show the coincidence of two fundamental systems. 
By the uniqueness of the matrix $H$, we determine the values of 
$h_{10}$ and $h_{20}$ as in this proposition.

We consider the middle block of $N_2$ and the ratios of the solutions
$F_{j1}(x)$ $(j=0,1,2)$. 
By taking out the factor $x_2^{1-b_3}$ from them, 
we have the circuit matrix of $\rho_2'$ 
with respect to a fundamental system of  
$_3\cF_2\begin{pmatrix}
a_1-b_3+1,a_2-b_3+1,a_3-b_3+1\\ b_1,\qquad b_2
\end{pmatrix}$. It is 
$$I_3-\frac{1-\l'}{\one' H' \tr \one'} H'\tr \one'\one',$$
where 
$\l'=\dfrac{\a_1\a_2\a_3}{\b_1\b_2\b_3^3}$, 
$H'=\diag(1,h'_{10},h'_{20})$ and $h_{j0}'$ is 
the replacement of $h_{j0}$ by 
$(\a_1,\a_2,\a_3)\to(\a_1/\b_3,\a_2/\b_3,\a_3/\b_3)$. 
By the uniqueness of $H$, we have 
\begin{align*}
\dfrac{h_{11}}{h_{01}}&=
\frac{(\a_1/\b_3-\b_1)(\a_2/\b_3-\b_1)(\a_3/\b_3-\b_1)(\b_2-1)}
{(\a_1/\b_3-1)(\a_2/\b_3-1)(\a_3/\b_3-1)\b_1(\b_1-\b_2)},\\
\dfrac{h_{21}}{h_{01}}
&= \frac{(\a_1/\b_3-\b_2)(\a_2/\b_3-\b_2)(\a_3/\b_3-\b_2)(\b_1-1)}
{(\a_1/\b_3-1)(\a_2/\b_3-1)(\a_3/\b_3-1)\b_2(\b_2-\b_1)},\\
h_{11}&=h_{01}\frac{(\a_1-\b_1\b_3)(\a_2-\b_1\b_3)(\a_3-\b_1\b_3)(\b_2-1)}
{(\a_1-\b_3)(\a_2-\b_3)(\a_3-\b_3)\b_1(\b_1-\b_2)},\\
h_{21}&=h_{01} 
\frac{(\a_1-\b_2\b_3)(\a_2-\b_2\b_3)(\a_3-\b_2\b_3)(\b_1-1)}
{(\a_1-\b_3)(\a_2-\b_3)(\a_3-\b_3)\b_2(\b_2-\b_1)}.
\end{align*}
By the bottom right block of $N_2$, we have
\begin{align*}
h_{12}&=h_{02}\frac{(\a_1-\b_1\b_4)(\a_2-\b_1\b_4)(\a_3-\b_1\b_4)(\b_2-1)}
{(\a_1-\b_4)(\a_2-\b_4)(\a_3-\b_4)\b_1(\b_1-\b_2)},\\
h_{22}&=h_{02} 
\frac{(\a_1-\b_2\b_4)(\a_2-\b_2\b_4)(\a_3-\b_2\b_4)(\b_1-1)}
{(\a_1-\b_4)(\a_2-\b_4)(\a_3-\b_4)\b_2(\b_2-\b_1)}.
\end{align*}

By considering the loop $\rho_3\cdot(\rho_1\cdot \rho_3\cdot \rho_1^{-1})
\cdot (\rho_1^2\cdot \rho_3\cdot \rho_1^{-2})$, 
we determine the values of 
$h_{01}$ and $h_{02}$ as in this proposition. 
Hence we have the entries of $H$.

We compute the determinant of $N_2$. Since $\det(M_3)=\l$, we have 
$\det(N_2)= \l^3.$
On the other hand, the determinants of block matrices of $N_2$ are 
$$\frac{\b_1\b_2}{\a_1\a_2\a_3},\quad 
\frac{\b_1\b_2\b_3^3}{\a_1\a_2\a_3},\quad 
\frac{\b_1\b_2\b_4^3}{\a_1\a_2\a_3}.
$$
Thus we have 
$$\l^3=\Big(\frac{\b_1\b_2\b_3\b_4}{\a_1\a_2\a_3}\Big)^3.$$
Note that $(1,1,1,0,\dots,0)$ is an eigenvector of $N_2$. 
This property yields that $\l$ is as in this proposition. 
\end{proof}
We conclude results in this section as the following theorem. 
\begin{theorem}
\label{th:monod}
Suppose the condition (\ref{eq:non-integral}). Then we have 
\begin{align*}
M_1&=\diag(1,\b_1^{-1},\b_2^{-1},1,\b_1^{-1},\b_2^{-1},
1,\b_1^{-1},\b_2^{-1}),\\
M_2&=\diag(1,1,1,\b_3^{-1},\b_3^{-1},\b_3^{-1},
\b_4^{-1},\b_4^{-1},\b_4^{-1}),\\
M_3&=I_9-\frac{1-\l}{\one H\tr\one}H\tr \one\one,
\end{align*}
where $I_9$ is the unit matrix of size $9$, $\a_i=\exp(2\pi\sqrt{-1}a_i)$, 
$\b_j=\exp(2\pi\sqrt{-1}b_j)$, $\one=(1,\dots,1)\in \N^9$, 
$\l=(\b_1\b_2\b_3\b_4)/(\a_1\a_2\a_3)$, 
$H=\diag(1,h_{10},h_{20},\dots,h_{22})$ whose entries are 
given in Proposition \ref{prop:H-entry}. 
\end{theorem}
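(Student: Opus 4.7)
The theorem is a compilation of the results proved through this section, so the plan is to assemble them cleanly rather than reprove anything. First I would record that $M_1$ and $M_2$ are immediate from Lemma \ref{lem:M1M2}: each $F_{jk}(x)$ carries the monomial prefactor $x_1^{1-b_{1j}} x_2^{1-b_{2k}}$ multiplying a power series convergent at the origin, so continuation along $\rho_1$ (resp.\ $\rho_2$) multiplies the $(jk)$-entry of $\mathbf{F}(x)$ by $e^{-2\pi\sqrt{-1}\,b_{1j}}$ (resp.\ $e^{-2\pi\sqrt{-1}\,b_{2k}}$), and the diagonal forms in the statement follow from the order (\ref{eq:order}) together with the conventions $b_{10}=b_{20}=1$.

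For $M_3$, I would chain together Lemmas \ref{lem:eigen-spaces}, \ref{lem:H-exist} and \ref{lem:M3}. Under (\ref{eq:non-integral}) the circuit $M_3$ is diagonalizable with a one-dimensional eigenspace for some $\lambda \ne 1$ and an eight-dimensional $1$-eigenspace. Fix any $\lambda$-eigenvector $v$; by Lemma \ref{lem:M3}(4) all of its entries are non-zero, so rescaling the free constants $g=(g_{00},\dots,g_{22})$ transforms $v$ into $\one=(1,\dots,1)$. With this normalization Lemma \ref{lem:H-exist} forces the intersection matrix $H$ to be diagonal with top-left entry $1$, and Lemma \ref{lem:M3}(3) gives the reflection formula
$$M_3 = I_9 - \frac{1-\lambda}{\one H \tr\one}\, H\, \tr\one\, \one.$$

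The remaining task is to identify $\lambda$ and the diagonal entries of $H$, which is exactly Proposition \ref{prop:H-entry}. The main obstacle there — already handled in that proposition — is the reduction to one variable via the loop $\rho_3\cdot(\rho_2\rho_3\rho_2^{-1})\cdot(\rho_2^2\rho_3\rho_2^{-2})$: Proposition \ref{prop:reduction} says this loop commutes with $\rho_2$ and is homotopic, after letting $L$ degenerate to $x_2=0$, to a loop $\rho_2'$ around $x_1=1$ in that axis. Hence the circuit matrix $N_2$ along the composite loop is block-diagonal in the $(3,3,3)$-partition by the index $k$, and each $3\times 3$ block, after factoring out the prefactor $x_2^{1-b_{2k}}$ from the corresponding solutions $F_{jk}(x)$, is the monodromy matrix at $x_1=1$ of a shifted $_3\mathcal{F}_2$ system, which is the explicit reflection of Proposition 3.1 of \cite{M2}. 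Matching the top-left block gives $h_{10}$ and $h_{20}$, the middle and bottom blocks give the ratios $h_{j1}/h_{01}$ and $h_{j2}/h_{02}$, and a symmetric argument with $\rho_1$ in place of $\rho_2$ determines $h_{01}$ and $h_{02}$. Finally, $\lambda$ is fixed by comparing $\det(N_2)=\lambda^3$ with the product of the three $3\times 3$ block determinants $\b_1\b_2/(\a_1\a_2\a_3)$, $\b_1\b_2\b_3^3/(\a_1\a_2\a_3)$, $\b_1\b_2\b_4^3/(\a_1\a_2\a_3)$, and then singling out the correct cube root from the fact that $(1,1,1,0,\dots,0)$ is an eigenvector of $N_2$. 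Combining these inputs yields exactly the expressions in the statement.
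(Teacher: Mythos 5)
Your proposal is correct and follows essentially the same route as the paper, which states Theorem \ref{th:monod} as a summary of Lemma \ref{lem:M1M2}, Lemmas \ref{lem:eigen-spaces}--\ref{lem:M3} (with the normalization of the $\l$-eigenvector to $\one$), and Proposition \ref{prop:H-entry}, with no separate argument beyond assembling those results. Your recap of how Proposition \ref{prop:H-entry} pins down $\l$ and the $h_{jk}$ via the composite loop and the restriction to the $_3\cF_2$ systems matches the paper's proof of that proposition.
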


\begin{cor}
\label{cor:regular-singular}
The system $\hge{a}{B}$ is regular singular.  
\end{cor}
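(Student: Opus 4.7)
The plan is to deduce regular singularity by exhibiting, near each irreducible component of the singular locus $S=\{x_1 x_2 R(x)=0\}$ and near the line at infinity, an explicit local basis of solutions each of which is a product of a power of a defining equation with a holomorphic factor. Once such bases are found at every boundary component, the regular singular property follows from the moderate growth (Nilsson class) characterization of regular holonomic systems.

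Along $x_1=0$, Theorem \ref{th:fund-sols} provides a basis whose members are $x_1^{1-b_{1j}}x_2^{1-b_{2k}}$ times a power series absolutely convergent on $\D$; the $x_1$-exponents are $0,1-b_1,1-b_2$, each with multiplicity three, which is exactly the local form required. The component $x_2=0$ is identical by symmetry (for example via the $\sigma_{001}$-action of Proposition \ref{prop:E-symmetry}), and the line at infinity is handled by the chart $y=(-x_1/x_2,1/x_2)$, in which Corollary \ref{cor:sol-at-infty} supplies nine local solutions of the analogous shape $y_1^{1-b_j}y_2^{a_i}$ times functions holomorphic at a generic point of $y_2=0$.

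The remaining component is the nodal cubic $R(x)=0$. At a generic point $P_1$ of this curve, Lemma \ref{lem:eigen-spaces} produces $8$ honest holomorphic local solutions: the Euler characteristic computation $\chi(T')=\chi(T)-1=8$ shows that $\dim\mathrm{Sol}(U_{P_1})=8$. The ninth solution is the integral $\int_{\OO} u(t,x)dt$ supported by the vanishing cycle $\OO$, whose monodromy eigenvalue along a loop transverse to $R(x)=0$ is $\l\neq 1$. Since $M_3$ is the semisimple pseudo-reflection of Theorem \ref{th:monod}, no logarithmic term arises, so this ninth solution admits the local form $R(x)^\mu\psi(x)$ with $e^{2\pi\sqrt{-1}\mu}=\l$ and $\psi$ holomorphic in a neighborhood of $P_1$. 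Together these nine functions form the required local basis at $R(x)=0$.

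The main obstacle is the verification at $R(x)=0$: one must know both that $M_3$ is diagonalizable (so logarithms do not occur) and that the $8$-dimensional eigenvalue-$1$ subspace is actually carried by $8$ single-valued local solutions rather than merely by monodromy-invariant but multivalued functions. Both statements hold under the non-integrality assumption (\ref{eq:non-integral}) by the vanishing cycle construction of Lemma \ref{lem:eigen-spaces} and the explicit form of $M_3$ in Theorem \ref{th:monod}; with these in hand the four-component check assembles into the proof of the corollary.
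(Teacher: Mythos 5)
Your proposal is correct and follows essentially the same route as the paper: the paper's own proof is the one-line remark that one need only examine the behavior of the solutions around each component of the singular locus, which is precisely the component-by-component verification you carry out (using Theorem \ref{th:fund-sols} along the axes, Corollary \ref{cor:sol-at-infty} at infinity, and the vanishing-cycle/eigenspace analysis of Lemma \ref{lem:eigen-spaces} and Theorem \ref{th:monod} along $R(x)=0$). Your write-up simply makes explicit the details the authors leave implicit.
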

\begin{proof}
We have only to consider the behavior of its solutions
around each component of the singular locus $S$.
\end{proof}

\begin{remark}
\begin{enumerate}
\item 
To study the monodromy of $_3\cF_2\begin{pmatrix}
a_1,a_2,a_3\\ b_1,b_2
\end{pmatrix}$ in \cite{M2}, 
we assume 
$$a_i-a_{i'}\notin \Z,\quad (1\le i<i'\le 3)$$
so that the eigen-polynomial $f(t)$ of a circuit matrix implies 
two independent linear equations by the substitution of $t=\a_i$ $(i=1,2,3)$.
Since we can remove these conditions by considering $f'(t)$ and $f''(t)$ 
in the case of $\a_i=\a_{i'}$, we do not need these conditions 
for Theorem \ref{th:monod}. 
\item 
Note that we can cancel the factor $\a_1\a_2\a_3-\b_1\b_2\b_3\b_4$ 
from $\dfrac{1-\l}{\one H\tr \one}$ in the expression of $M_3$ 
in Theorem \ref{th:monod}. 
Thus 
we can remove the condition 
$$a_1+a_2+a_3-b_1-b_2-b_3-b_4\notin \Z$$
from (\ref{eq:non-integral}).
\end{enumerate}
\end{remark}

\section{Intersection numbers}
\label{sec:Intersection}
In this section, we compute the intersection numbers of twisted cycles 
corresponding to fundamental solutions.
We start from the following lemma, which is a consequence of 
Lemma \ref{lem:H-exist}. 
\begin{lemma}
\label{lem:orth}
If $(j,k)\ne(j',k')$ for $0\le j,j',k,k'\le 2$ then 
$$\cI(\De_{jk}^u,{\De_{j'k'}^u}^\vee)=0,$$
where $\De_{jk}^u$ are given in Remarks \ref{rem:regularization}, 
\ref{rem:Delta10} ,\ref{rem:Delta11} and \ref{rem:Delta21}, and 
${\De_{jk}^u}^\vee$ denotes the image of $\De_{jk}^u$ under the map 
$\;^\vee$.
\end{lemma}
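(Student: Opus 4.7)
The plan is to derive this orthogonality directly from the monodromy invariance of the twisted intersection pairing, using the diagonal form of $M_1$ and $M_2$ established in Lemma \ref{lem:M1M2}. The statement is in fact the unnormalised counterpart of the diagonality of $H^g$ already asserted in Lemma \ref{lem:H-exist}, since the unnormalised matrix $(\cI(\De_{jk}^u,{\De_{j'k'}^u}^\vee))$ and $H^g$ differ only by diagonal rescalings by the constants $g_{jk}$ and their duals; nevertheless I will spell out the argument in self-contained form.

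First, I invoke the local triviality of the twisted intersection form (as used in the proof of Lemma \ref{lem:H-exist} and, in essentially the same way, in the proof of Lemma 4 of \cite{MY}): for any loop $\rho$ based at $\dot x$, with circuit matrix $M_\rho$ acting on the column of cycles $\{\De_{jk}^u\}$ ordered according to (\ref{eq:order}), one has
\[
M_\rho\,H\,\tr(M_\rho)^\vee = H,
\]
where $H=(\cI(\De_{jk}^u,{\De_{j'k'}^u}^\vee))_{(jk),(j'k')}$. This reflects the fact that monodromy transports both arguments of $\cI$ simultaneously, while the pairing itself is locally constant on $X$.

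Next, I specialise to $\rho=\rho_1$ and $\rho=\rho_2$. By Lemma \ref{lem:M1M2}, the circuit matrices are diagonal with entries $c^{(1)}_{jk}\in\{1,\b_1^{-1},\b_2^{-1}\}$ depending only on $j$, and $c^{(2)}_{jk}\in\{1,\b_3^{-1},\b_4^{-1}\}$ depending only on $k$. Reading the invariance identity on the $((jk),(j'k'))$-entry and using $\b_\ell^\vee = \b_\ell^{-1}$, I obtain
\[
c^{(i)}_{jk}\,(c^{(i)}_{j'k'})^\vee\,\cI(\De_{jk}^u,{\De_{j'k'}^u}^\vee)=\cI(\De_{jk}^u,{\De_{j'k'}^u}^\vee),\qquad i=1,2.
\]

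Finally, I conclude by a case-check on the prefactor. If $j\ne j'$, then $c^{(1)}_{jk}(c^{(1)}_{j'k'})^\vee$ is one of $\b_1^{\pm 1},\b_2^{\pm 1},(\b_1\b_2^{-1})^{\pm 1}$, each of which is different from $1$ by the non-integrality of $b_1$, $b_2$, $b_1-b_2$ in (\ref{eq:non-integral}); hence the intersection vanishes. If instead $k\ne k'$, the analogous argument using the $i=2$ relation and the non-integrality of $b_3$, $b_4$, $b_3-b_4$ gives the vanishing. Since $(j,k)\ne(j',k')$ forces at least one of $j\ne j'$ or $k\ne k'$, one of the two relations always applies and the lemma follows. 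The only substantive input is the local triviality of the twisted intersection pairing, which is standard; I expect no genuine obstacle here, which is why this is packaged as a short consequence of the earlier monodromy analysis rather than as a geometric computation of cycles in $T$.
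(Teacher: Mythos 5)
Your proof is correct and follows essentially the same route as the paper, which presents this lemma precisely as a consequence of Lemma \ref{lem:H-exist}: the invariance $M_\rho H\,\tr(M_\rho)^\vee=H$ combined with the diagonal circuit matrices $M_1$, $M_2$ and the non-integrality of $b_1,b_2,b_1-b_2,b_3,b_4,b_3-b_4$ forces the off-diagonal intersection numbers to vanish. Your additional remark that the normalising constants $g_{jk}$ only rescale the cycles, and hence do not affect the vanishing, correctly bridges the normalised statement of Lemma \ref{lem:H-exist} to the unnormalised cycles $\De_{jk}^u$.
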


We compute intersection numbers by the two-dimensional reduction technique as 
follows. 
\begin{lemma}
\label{lem:int-SFC}
The intersection numbers $\cI(\De_{00}^u,{\De_{00}^u}^\vee)$, 
$\cI(\De_{10}^u,{\De_{10}^u}^\vee)$, $\cI(\De_{11}^u,{\De_{11}^u}^\vee) $
and $\cI(\De_{21}^u,{\De_{21}^u}^\vee)$ are 
\begin{align*}
&\frac{(\a_1-1)\b_1\b_3}
{(1-\b_1)(1-\b_3)(\a_1-\b_1\b_3)}
\cdot 
\frac{(\a_2-1)\b_2\b_4}
{(1-\b_2)(1-\b_4)(\a_2-\b_2\b_4)},\\[2mm]
&\dfrac{(\a_1-\b_1)(\a_3-\b_1)\b_3}
{(\a_1-\b_1\b_3)(\a_3-1)(1-\b_1)(1-\b_3)}
\cdot
\dfrac{(\a_2-\b_1)\b_2\b_4}
{(\a_2-\b_2\b_4)(\b_2-\b_1)(1-\b_4)},\\[2mm]
&\frac{\a_3-\b_1\b_3}
{(1-\b_1)(1-\b_3)(\a_3-1)}
\cdot 
\frac{(\a_2-\b_1\b_3)\b_2\b_4}
{(\b_2-\b_1)(\b_4-\b_3)(\a_2-\b_2\b_4)},\\[2mm]
&\dfrac{(\a_1-\b_2\b_3)(\a_2-\b_2\b_3)\b_1\b_4}
{(\a_1-\b_1\b_3)(\a_2-\b_2\b_4)(\b_1-\b_2)(\b_4-\b_3)}\cdot
\dfrac{\a_3-\b_2\b_3}
{(\a_3-1)(1-\b_2)(1-\b_3)},
\end{align*}
respectively.
\end{lemma}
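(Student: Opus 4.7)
The plan is to compute each of the four self-intersection numbers by reducing to a product of lower-dimensional intersections on the factors of a product cycle.

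For $\De_{00}^u$ I would first note that the cycle is already the regularized product $\reg_{00}^c(\triangle_1\times\triangle_2)$ and, by the assumption on $x$, its support lies entirely in the region $q(t)>0$ and hence avoids the hypersurface $Q$. Since the twisted intersection pairing is a sum of local contributions at intersection points weighted by local monodromy, and both $\De_{00}^u$ and ${\De_{00}^u}^\vee$ miss $Q$, the character around $Q$ plays no role. The local system therefore restricts near the support of $\De_{00}$ to an external tensor product in the pairs $(t_1,t_3)$ and $(t_2,t_4)$, and a product (Künneth-type) formula for twisted intersections gives
\[
\cI(\De_{00}^u,{\De_{00}^u}^\vee)
=\cI\bigl(\reg^c(\triangle_1)^{u_1},\reg^c(\triangle_1)^{u_1^\vee}\bigr)\cdot
\cI\bigl(\reg^c(\triangle_2)^{u_2},\reg^c(\triangle_2)^{u_2^\vee}\bigr),
\]
with $u_1,u_2$ as in Remark \ref{rem:regularization}(2). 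Each factor is the self-intersection of a regularized $2$-simplex with three branch edges, which I would evaluate by the standard formula
\[
\cI(\reg\triangle,\reg\triangle^\vee)=\frac{1-c_0c_1c_2}{(1-c_0)(1-c_1)(1-c_2)},
\]
applied with the characters read off from $u_1$ and $u_2$.

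For $\De_{10}^u$, $\De_{11}^u$ and $\De_{21}^u$ the same strategy applies after first pulling back by the birational involutions $\imath_{10},\imath_{11},\imath_{21}$ introduced in \S\ref{sec:Int-rep}. Under pullback each cycle becomes (up to sign and orientation) a regularized product $\square_1\times\triangle_2$ or $\triangle_1\times\triangle_2$, and the pulled-back integrand factors as a product of a function of $(s_1,s_3)$ and a function of $(s_2,s_4)$. The essential point is that the hypersurface $Q$ gets swapped with one of the coordinate hyperplanes: $\imath_{11}^*q(t)=\ell_{13}(s)$ as recorded before Corollary~\ref{cor:5th-int}, and similarly for $\imath_{10}$ and $\imath_{21}$ the pullback of $q=0$ becomes an edge of $\square_1$. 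Thus after pullback the cycle genuinely sits in a product space with a product local system, and the product formula again applies. For the triangle factors I would reuse the $2$-simplex formula above; for each regularized square $\square_1=[0,1]^2$ I would exploit its further decomposition into a product of two intervals together with the $1$-dimensional self-intersection formula
\[
\cI(\reg[0,1],\reg[0,1]^\vee)=\frac{c_0c_1-1}{(c_0-1)(c_1-1)}.
\]
Reading off the explicit characters in each case and multiplying the elementary fractions then yields the four asserted formulas; for instance, after $\imath_{11}^{-1}$ the first triangle carries characters $(\b_1,\b_3,\a_3^{-1})$ with product $\b_1\b_3/\a_3$, and the $2$-simplex formula immediately gives $\tfrac{\a_3-\b_1\b_3}{(1-\b_1)(1-\b_3)(\a_3-1)}$.

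The main obstacle will be careful bookkeeping: verifying the product decomposition after pullback (including signs coming from $-\reg^c$ in the definitions of $\De_{10}$ and $\De_{21}$ and from the Jacobians of the $\imath_{jk}$); identifying precisely which edge of each $2$-chain corresponds to which irreducible component of the divisor in $T^c$; and confirming that the indeterminacy locus of each $\imath_{jk}$ is disjoint from the support of the regularized cycle, so that $\imath_{jk}$ induces a genuine homology isomorphism on the relevant cycle class. Once those topological checks are complete, what remains is routine simplification of products of elementary fractions.
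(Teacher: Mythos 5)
Your proposal is correct and follows essentially the same route as the paper: the authors likewise compute $\cI(\De_{00}^u,{\De_{00}^u}^\vee)$ as the product $\cI(\triangle_1^{u_1},{\triangle_1^{u_1}}^\vee)\cdot\cI(\triangle_2^{u_2},{\triangle_2^{u_2}}^\vee)$ using the standard two-dimensional self-intersection formulas (citing \cite{Y}), and they handle $\De_{10}^u$, $\De_{11}^u$, $\De_{21}^u$ by transporting the computation to the $s$-space via the involutions $\imath_{jk}$ and again factoring into square and triangle contributions. Your explicit closed forms for the regularized $2$-simplex and interval reproduce their stated values (up to paired sign conventions that cancel in the products), and your attention to orientations, Jacobians, and the swap of $q=0$ with a coordinate hyperplane under $\imath_{jk}$ covers exactly the bookkeeping the paper leaves implicit.
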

\begin{proof}
By 
Remark \ref{rem:regularization} (2), 
we can compute $\cI(\De_{00}^u,{\De_{00}^u}^\vee)$ as the product of the 
intersection numbers $\cI(\triangle_1^{u_1}, {\triangle_1^{u_1}}^\vee)$ and  
$\cI(\triangle_2^{u_2}, {\triangle_2^{u_2}}^\vee)$, where 
$\triangle_i^{u_i}$ $(i=1,2)$ is a twisted cycle given by $\triangle_i$
and $u_i$ in the $(t_i,t_{i+2})$-space. 
By using results in 
\S 3.1 of Chapter VIII in \cite{Y}, 
we have
\begin{align*}
\cI(\triangle_1^{u_{1}},{\triangle_1^{u_{1}}}^\vee)&=
\frac{(1-\a_1^{-1})}
{(1-\b_1^{-1})(1-\b_3^{-1})(1-\a_1^{-1}\b_1\b_3)}
,\\
\cI(\triangle_2^{u_{2}},{\triangle_2^{u_{2}}}^\vee)&
=\frac{(1-\a_2^{-1})}
{(1-\b_2^{-1})(1-\b_4^{-1})(1-\a_2^{-1}\b_2\b_4)}.
\end{align*}

The intersection number $\cI(\De_{10}^u,{\De_{10}^u}^\vee)$ can be 
computed in the $s$-space by the involution $\imath_{13}$. 
It reduces to the product of 
$\cI(\square_1^{u'_1}, {\square_1^{u'_1}}^\vee)$ and  
$\cI(\triangle_2^{u'_2}, {\triangle_2^{u'_2}}^\vee)$, where 
$\square_1^{u_1'}$ and $\triangle_2^{u'_2}$ are twisted cycles 
in the $(s_i,s_{i+2})$-space. 
We can similarly compute them as 
\begin{align*}
\cI(\square_{1}^{u'_{1}},{\square_{1}^{u'_{1}}}^\vee)&=
\dfrac{1-\a_3^{-1}\b_1}
 {(1-\b_1)(1-\a_3^{-1})}\cdot 
 \dfrac{1-\a_1^{-1}\b_1}
 {(1-\b_3^{-1})(1-\a_1^{-1}\b_1\b_3)}
,\\
\cI(\triangle_2^{u'_{2}},{\triangle_2^{u'_{2}}}^\vee)&=
\dfrac{1-\a_2^{-1}\b_1}
{(1-\b_1\b_2^{-1})(1-\b_4^{-1})(1-\a_2^{-1}\b_2\b_4)}.
\end{align*}
 
The rests also can be similarly computed. 
\end{proof}

\begin{proposition}
\label{prop:proportional}
We have 
$$\cI\left(\De_{jk}^u,{\De_{jk}^u}^\vee\right)
=h_{jk}\cdot \cI(\De_{00}^u,{\De_{00}^u}^\vee),
$$
for any $0\le j,k\le 2$, 
where 
$h_{jk}$ are given in Proposition \ref{prop:H-entry}.
\end{proposition}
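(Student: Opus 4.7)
The plan is to combine the $S_3 \times D_4$-symmetry of the setup with the direct computations already supplied by Lemma 6.2. By Proposition 4.2 the nine indices $(j,k)$ fall into three $S_3 \times D_4$-orbits: the singleton $\{(0,0)\}$, the four-element orbit of $(1,0)$, and the four-element orbit of $(1,1)$. Moreover, the cycles $\De_{jk}^u$ in these orbits were defined in Remarks 3.9, 3.12, and 3.14 by applying the group action to $\De_{10}^u$, $\De_{11}^u$, and $\De_{21}^u$, so the intersection pairing on twisted homology, together with Remark 5.3, which records that the entries $h_{jk}$ transform under $S_3 \times D_4$ exactly as the fundamental solutions $F_{jk}$, shows that both sides of the asserted identity are covariant. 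It is therefore enough to verify the identity for one representative per orbit.

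For the orbit $\{(0,0)\}$ the identity is tautological since $h_{00}=1$. For $(1,0)$ and $(1,1)$, I would form the ratio of the two explicit expressions supplied by Lemma 6.2 and compare with the formulas for $h_{10}$ and $h_{11}$ in Proposition 5.4. In each of these cases, the common factors $(\a_1-\b_1\b_3)$, $(\a_2-\b_2\b_4)$, $(1-\b_1)$, $(1-\b_3)$, $(1-\b_4)$ that appear in both $\cI(\De_{jk}^u, {\De_{jk}^u}^\vee)$ and $\cI(\De_{00}^u, {\De_{00}^u}^\vee)$ cancel cleanly, and sign flips of the form $(1-\b_i)/(\b_i-\b_{i'}) = (\b_i-1)/(\b_{i'}-\b_i)$ convert the remainder into the form given in Proposition 5.4. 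The case $(2,1)$, which Lemma 6.2 also handles, furnishes an independent cross-check within the second nontrivial orbit.

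The main point requiring care is verifying that the concrete $S_3 \times D_4$-action on the $4$-chains $\De_{jk}^u$, implemented through the involutions $\imath_{11}$, $\imath_{21}$ and the variable change $t \mapsto (t_3,t_4,t_1,t_2)$ used in Remarks 3.9, 3.12, and 3.14, transports the twisted cycles together with their loaded branches of $u(t,x)$ so that the resulting intersection numbers are indeed permuted in the same manner as the $h_{jk}$. Because both the integrand $u(t,x)$ and the regularized product chambers behave naturally under the group, this is a matter of matching parameter relabellings rather than a new geometric computation, but it is the only place where one must be bookkeeping-careful. Once this equivariance is granted, the remaining algebra reduces to the cancellations indicated above and the proposition follows.
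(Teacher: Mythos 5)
Your proposal is correct and takes essentially the same route as the paper: the paper likewise verifies the cases $(00)$, $(10)$, $(11)$, $(21)$ directly from Lemma \ref{lem:int-SFC} and transports the identity to the remaining indices via the $S_3\times D_4$-action on the parameters together with Remark \ref{rem:S3D4onH}, being careful (as you note, and as Remark \ref{rem:int-action} records) that $\cI(\De_{00}^u,{\De_{00}^u}^\vee)$ itself is not invariant, so only the ratio is transported. The one small correction is that $(21)$ is not merely a cross-check: since $\De_{21}^u$ is built by its own map $\imath_{21}$ rather than as a group image of $\De_{11}^u$, its case must be verified directly --- which you in fact propose to do.
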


\begin{proof}
Lemma \ref{lem:int-SFC} yields that this proposition holds 
for the twisted cycles $\De_{jk}^u$ 
($(jk)=(10)$, $(11)$, $(21)$). Recall that 
the other twisted cycles are constructed by these twisted cycles and 
some actions of $S_3\times D_4$ on the parameters.
By acting them on the obtained identities and using Remark \ref{rem:S3D4onH}, 
we have this proposition.  
For example, by the action of $((12),\sigma_{110})\in S_3\times D_4$
on the identity for $\De_{10}^u$, 
the intersection number 
$\cI\left(\De_{20}^u,{\De_{20}^u}^\vee\right)$  can be computed as 
$$
\dfrac{(\a_2-\b_2)(\a_3-\b_2)\b_4}
{(\a_2-\b_2\b_4)(\a_3-1)(1-\b_2)(1-\b_4)}
\cdot
\dfrac{(\a_1-\b_2)\b_1\b_3}
{(\a_1-\b_1\b_3)(\b_1-\b_2)(1-\b_3)},
$$
and it is equal to $h_{20} \cI\left(\De_{00}^u,{\De_{00}^u}^\vee\right)$,  
where $\sigma_{110}$ is given in Table \ref{tab:D4S4}.   
\end{proof}

\begin{remark}
\label{rem:int-action}
The intersection number 
$\cI\left(\De_{00}^u,{\De_{00}^u}^\vee\right)$ is not invariant under the 
action of $S_3\times D_4$ on the parameters. 
In the constructions of $\De_{jk}$ $(jk)=(20)$, $(01)$, $(02)$,
$(21)$ and $(12)$,  we use only actions which keep it invariant.
By Proposition \ref{prop:proportional} and Remark \ref{rem:S3D4onH}, 
the action of $S_3\times D_4$ on the ratio 
$$
\frac{\cI\left(\De_{jk}^u,{\De_{jk}^u}^\vee\right)}
{\cI\left(\De_{00}^u,{\De_{00}^u}^\vee\right)}
$$
is compatible with that on the fundamental solutions $F_{jk}(x)$.
This property yields that 
$$
\cI\left(\De_{\sigma(jk)}^u,{\De_{\sigma(jk)}^u}^\vee\right)=
\frac{\sigma\cdot \cI\left(\De_{jk}^u,{\De_{jk}^u}^\vee\right)}
{\sigma\cdot\cI\left(\De_{00}^u,{\De_{00}^u}^\vee\right)}
\times \cI\left(\De_{00}^u,{\De_{00}^u}^\vee\right),
$$
where $\sigma\in D_4$ and $\De_{\sigma(jk)}^u$ is the twisted cycle 
corresponding to the fundamental solution $\sigma\cdot F_{jk}$.
For example, $\cI\left(\De_{20}^u,{\De_{20}^u}^\vee\right)$ can be 
computed as 
$$
\frac{\sigma_{100}\cdot \cI\left(\De_{10}^u,{\De_{10}^u}^\vee\right)}
{\sigma_{100}\cdot\cI\left(\De_{00}^u,{\De_{00}^u}^\vee\right)}
\times \cI\left(\De_{00}^u,{\De_{00}^u}^\vee\right),
$$
where $\sigma_{100}$ is given in Table \ref{tab:D4S4}. 
\end{remark}

\begin{theorem}
The fundamental system $\mathbf{F}(x)$ is 
given by the integrals 
$$\mathbf{\Delta}(x)=\tr\big(\dots,\int_{\De_{jk}} u(t,x)dt,\dots\big),$$ 
where $(jk)$ are arrayed in the order $(00),(10),(20),\dots,(22)$. 
\end{theorem}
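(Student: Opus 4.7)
The plan is to show that $\mathbf\Delta(x)$ coincides with the normalized fundamental system $\mathbf F(x)$ of Section \ref{sec:Monodromy}: equivalently, that the choice $g_{jk}=1/C_{jk}$ (for which $\mathbf F^g=\mathbf\Delta$ by Theorem \ref{th:int-rep}, Corollaries \ref{cor:2nd-int}, \ref{cor:5th-int}, \ref{cor:6th-int}, and the $D_4$-symmetry statements in Remarks \ref{rem:Delta10}, \ref{rem:Delta11}, \ref{rem:Delta21}) is the normalization whose associated $\l$-eigenvector of $M_3^g$ equals $\one$. The strategy has two parts: first match the intersection matrix $H^g$ with the $H$ of Theorem \ref{th:monod}, then use the $_3\cF_2$ reduction of Section \ref{sec:Monodromy} to pin down $v^g=\one$.

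For the first part, with $g_{jk}=1/C_{jk}$ the twisted cycles $\De_{jk}^{g,u}$ coincide with the cycles $\De_{jk}^u$ constructed in Section \ref{sec:Int-rep}. Lemma \ref{lem:orth} then kills the off-diagonal entries of $H^g$, and Proposition \ref{prop:proportional} identifies its diagonal entries with the $h_{jk}$ of Proposition \ref{prop:H-entry}. Hence $H^g$ is literally the matrix $H$ of Theorem \ref{th:monod}.

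For the second part, by Lemma \ref{lem:M1M2} the matrices $M_1^g$ and $M_2^g$ are forced. Proposition \ref{prop:reduction} says that the loop $\rho_3(\rho_2\rho_3\rho_2^{-1})(\rho_2^2\rho_3\rho_2^{-2})$ is homotopic to a loop $\rho_2'$ sitting in $\{x_2=0\}$, so its circuit matrix $N_2^g$ on $\mathbf\Delta$ is block diagonal with three $3\times 3$ blocks (exactly as in the proof of Proposition \ref{prop:H-entry}). Restricting the integrals $\int_{\De_{j0}}u\,dt$ to $x_2=0$ produces the standard Euler-integral basis of $_3\cF_2\binom{a_1,a_2,a_3}{b_1,b_2}$ without Gamma factors, and Proposition 3.1 of \cite{M2} identifies the $\l'$-eigenvector of $\rho_2'$'s circuit matrix in that basis as $(1,1,1)$, forcing $v^g_{j0}=1$ for $j=0,1,2$. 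After extracting $x_2^{1-b_3}$ or $x_2^{1-b_4}$, the middle and bottom blocks of $N_2^g$ reduce analogously to shifted $_3\cF_2$'s in the same normalization, giving $v^g_{jk}=1$ for $k=1,2$. Hence $v^g=\one$ and $\mathbf F(x)=\mathbf\Delta(x)$.

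The main obstacle is the identification of the restricted twisted cycles $\De_{j0}^u|_{x_2=0}$ (and their power-function-reduced analogues for $k=1,2$) with the Euler-integral cycles of $_3\cF_2$ in the normalization of \cite{M2}: this requires carefully tracking how the regularized $(t_3,t_4)$-factors of the $4$-chains degenerate as $x_2\to 0$ and verifying that the chosen branches of $u(t,x)$ restrict to the correct branches in the $_3\cF_2$ integrand. Once the cycles match up, invoking Proposition 3.1 of \cite{M2} block by block, together with the $S_3\times D_4$-symmetry and the analogous $\rho_1$-loop argument, settles the remaining entries.
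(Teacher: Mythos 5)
Your overall strategy diverges from the paper's at the decisive step. The paper also begins by writing $\mathbf F(x)=g\mathbf\Delta(x)$ for a diagonal $g$ and uses Lemma \ref{lem:orth} and Proposition \ref{prop:proportional} to get $gHg^\vee=H$, so your first part matches. But to pin down the $\l$-eigenvector of the $\rho_3$-circuit with respect to $\mathbf\Delta(x)$, the paper does not restrict to the axes at all: it expands the vanishing cycle $\OO^u$ of Lemma \ref{lem:eigen-spaces} as $\sum_{j,k}c_{jk}\De^u_{jk}$, proves the key identity $\cI(\OO^u,{\De^u_{jk}}^\vee)=\cI(\De^u_{jk},{\De^u_{jk}}^\vee)$ by following Proposition 5.8 of \cite{G2}, and pairs both sides of the expansion with ${\De^u_{jk}}^\vee$ to force every $c_{jk}=1$, hence $\OO^u=(1,\dots,1)\,{}^t(\dots,\De^u_{jk},\dots)$. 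Your replacement --- reading the eigenvector off block by block from the $_3\cF_2$ reductions of $N_2$ and $N_1$ --- is structurally plausible: since $N_2^g-I_9=\sum_{k,l}\gamma_{kl}(M_2^kH^g\,{}^tv^\vee)(vM_2^{-l})$ and the top-left block of $M_2$ is the identity, that block of $N_2^g$ equals $I_3-\gamma H^{(0)}\,{}^t(v^{(0)})^\vee v^{(0)}$ with $H^{(0)}=\diag(1,h_{10},h_{20})$, so matching it \emph{on the nose} against the matrix of Proposition 3.1 of \cite{M2} would force $v^{(0)}\propto(1,1,1)$, and the remaining blocks plus the $\rho_1$-loop then tie all nine entries together.

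The genuine gap is exactly the step you flag and then leave open: you must show that the restricted integrals $\int_{\De_{j0}}u(t,x)dt\big|_{x_2=0}=F_{j0}(x_1,0)/C_{j0}$ agree, up to a single common scalar, with the particular normalization of the $_3F_2$ fundamental system for which \cite{M2} asserts the reflection vector is $(1,1,1)$ (and likewise for the shifted $_3F_2$'s arising in the other blocks). Without this, the comparison of the two rank-one matrices only yields $v^{(0)}\propto(d_0,d_1,d_2)$, where the $d_j$ are the unknown ratios of the two normalizations, and the theorem --- which is precisely a statement about normalizations --- is not proved. Note that no degeneration of the regularized $4$-chains is actually required here: Theorem \ref{th:int-rep} and Corollary \ref{cor:2nd-int} already give the series identities, so the restriction to $x_2=0$ is immediate, and the entire content of the missing step is a Gamma/sine-factor comparison between $1/C_{j0}$ and the constants of \cite{M2}. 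That comparison is essentially the one-variable instance of the theorem itself; the paper's vanishing-cycle argument is designed precisely to bypass it.
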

\begin{proof}
By Theorem \ref{th:int-rep} and Corollaries \ref{cor:2nd-int}, 
\ref{cor:5th-int} and \ref{cor:6th-int}, 
there exists a diagonal matrix $g=\diag(\dots,g_{jk},\dots)\in GL_9(\C)$ 
such that 
$$\mathbf{F}(x)=g\mathbf{\Delta}(x).$$
Moreover, it turns out by Proposition \ref{prop:proportional} that 
$$g H g^{\vee}=H,$$ 
which is  equivalent to $g_{jk}g_{jk}^\vee=1$ for any $(jk)$.
We show that $g$ is a scalar matrix.
By our normalization of $\mathbf{F}(x)$, it is sufficient 
to show that the $\l$-eigenvector 
of the circuit matrix of $\rho_3$ is $\one =(1,\dots,1)$ 
with respect to $\mathbf{\Delta}(x)$.
As in proved in Lemma \ref{lem:eigen-spaces}, 
the integral $\int_{\OO}u(t,x)dt$ corresponds to the $\l$-eigenvector of $M_3$.
We express $\OO^u$ as a linear combination 
\begin{equation}
\label{eq:lin-comb}
\OO^u=\sum_{0\le j,k\le 2} c_{jk}\De_{jk}^u,
\end{equation}
of $\De_{jk}^u\in H_4(T,\mathcal{L}_u)$, 
where $\OO^u$ is the twisted cycle defined by the $4$-cycle 
$\OO$ in (\ref{eq:vanishing}) and a branch of $u(t,x)$ on it.
By following the proof of Proposition 5.8 in \cite{G2},  
we can show that the key identity 
$$
\cI\left(\OO^u,{\De_{jk}^u}^\vee\right)=
\cI\left(\De_{jk}^u,{\De_{jk}^u}^\vee\right)
$$
for any $(jk)$. 
By considering the intersection numbers 
of both sides of   (\ref{eq:lin-comb}) and ${\De_{jk}^u}^\vee$,  
we see that $c_{jk}=1$ by the key identity and Lemma \ref{lem:orth}. 
Hence we have 
$$\OO^u = \sum_{0\le j,k\le 2} \De_{jk}^u
=(1,\dots,1)\;^t(\dots, \De_{jk}^u,\dots),$$
which shows the $\l$-eigenvector 
of the circuit matrix of $\rho_3$ is $\one$ 
with respect to $\mathbf{\Delta}(x)$ by the linearity of the integration. 
\end{proof}

Recall that the $(j,k)$-entry of $\mathbf{\Delta}(x)$ is 
$\int_{\De_{jk}}u(t,x)dt=F_{jk}(x)/C_{jk}$.  
Note that $C_{jk}$ times 
$$
\G(1-a_3)\G(b_1+b_3-a_1-1)\G(b_2+b_4-a_2-1)
$$
becomes 
 $$
C'_{jk}=
\frac{\prod_{i=1}^3 \G(b_{1j}+b_{2k}-a_i-1)}
{\prod_{1\le i\le 3}^{i\ne j} \G(b_{1j}-b_{1i})
 \prod_{1\le i\le 3}^{i\ne k} \G(b_{2k}-b_{2i})}.
$$
Thus we consider 
$$\frac{\mathbf{F}(x)}{\G(1-a_3)\G(b_1+b_3-a_1-1)\G(b_2+b_4-a_2-1)};$$
its $(j,k)$-entry ${G}_{jk}(x)$ is 
\begin{align}
\label{eq:N-series}
\frac{F_{jk}(x)}{C'_{jk}}=&\dfrac{\prod_{i=1}^3 \sin(\pi(a_i-b_{1j}-b_{2k}+2))}
{\prod_{1\le i\le 3}^{i\ne j} \sin(\pi(b_{1j}-b_{1i}))
 \prod_{1\le i\le 3}^{i\ne k} \sin(\pi(b_{2k}-b_{2i}))}
\\
\nonumber
&\cdot \sum_{n'_1,n'_2} \Big[\prod_{i=1}^3\frac{\G(a_i+n'_1+n'_2)}
{\G(b_{1i}+n'_1)\G(b_{2i}+n'_2)}\Big]
x_1^{n'_1}x_2^{n'_2},
\end{align}
where 
$n'_1$ and $n'_2$ run over the sets 
\begin{align*}
1-b_{1j}+\N&=\{1-b_{1j},2-b_{1j}, 3-b_{1j}, \dots\},\\
1-b_{2k}+\N&=\{1-b_{2k},2-b_{2k}, 3-b_{2k}, \dots\},
\end{align*}
respectively.
Note that $G_{jk}(x)$ is defined under conditions 
$$ 
b_{1i}-b_{1j}\ (i\in \{1,2,3\}-\{j\}),\quad 
b_{2i}-b_{1k}\ (i\in \{1,2,3\}-\{k\})\notin \Z.$$

\begin{remark}
\label{rem:irreducible}
In \cite{KMO1}, we give a linear transformation of them 
so that the entries are valid even in 
$$(\b_2-\b_1)(\b_2-1)(\b_1-1)(\b_4-\b_3)(\b_4-1)(\b_3-1)=0,$$ 
and study the irreducibility of the monodromy representation of $\hge{a}{B}$.
\end{remark}

We can regard the twisted homology group $H_4(T,\mathcal{L}_u)$ 
as the representation space of the monodromy of $\hge{a}{B}$. 
By the deformation of elements in $H_4(T,\mathcal{L}_u)$ along 
a loop $\rho$, we have a homomorphism 
$$\cM:\pi_1(X,\dot x)\ni \rho \mapsto 
\cM_\rho\in GL(H_4(T,\mathcal{L}_u)).$$
We express the circuit transformations $\cM_i=\cM_{\rho_i}$ along the loops 
$\rho_i$ ($i=1,2,3)$ in terms of the intersection form.

\begin{theorem}
\label{th:monod-int} 
We have 
\begin{align*}
&\cM_1(\De^u)=\\\De^u
&-(1-\b_1^{-1})
\left(\cI(\De^u,{\De^u_{10}}^\vee),\cI(\De^u,{\De^u_{11}}^\vee),
\cI(\De^u,{\De^u_{12}}^\vee)\right)
\cH_{11}^{-1} 
\begin{pmatrix}
\De^u_{10}\\
\De^u_{11}\\
\De^u_{12}\\
\end{pmatrix}\\
&-(1-\b_2^{-1})
\left(\cI(\De^u,{\De^u_{20}}^\vee),\cI(\De^u,{\De^u_{21}}^\vee),
\cI(\De^u,{\De^u_{22}}^\vee)\right)
\cH_{12}^{-1} 
\begin{pmatrix}
\De^u_{20}\\
\De^u_{21}\\
\De^u_{22}
\end{pmatrix},
\end{align*}
\begin{align*}
&\cM_2(\De^u)=\\ \De^u
&-(1-\b_3^{-1})
\left(\cI(\De^u,{\De^u_{01}}^\vee),\cI(\De^u,{\De^u_{11}}^\vee),
\cI(\De^u,{\De^u_{21}}^\vee)\right)
\cH_{21}^{-1} 
\begin{pmatrix}
\De^u_{01}\\
\De^u_{11}\\
\De^u_{21}\\
\end{pmatrix}\\
&-(1-\b_4^{-1})
\left(\cI(\De^u,{\De^u_{02}}^\vee),\cI(\De^u,{\De^u_{12}}^\vee),
\cI(\De^u,{\De^u_{22}}^\vee)\right)
\cH_{22}^{-1} 
\begin{pmatrix}
\De^u_{02}\\
\De^u_{12}\\
\De^u_{22}
\end{pmatrix},\\
&\cM_3(\De^u)=\De^u-\Big(1-\frac{\b_1\b_2\b_3\b_4}{\a_1\a_2\a_3}\Big)
\frac{\cI(\De^u,{\OO^u}^\vee)}{\cI(\OO^u,{\OO^u}^\vee)}\OO^u,
\end{align*}
where $\De^u\in H_4(T,\mathcal{L}_u)$ and 
\begin{align*}
\cH_{1j}&=\diag\left(\cI(\De^u_{j0},{\De^u_{j0}}^\vee),
\cI(\De^u_{j1},{\De^u_{j1}}^\vee),
\cI(\De^u_{j2},{\De^u_{j2}}^\vee)
\right),\\
\cH_{2j}&=\diag\left(\cI(\De^u_{0j},{\De^u_{0j}}^\vee),
\cI(\De^u_{1j},{\De^u_{1j}}^\vee),
\cI(\De^u_{2j},{\De^u_{2j}}^\vee)
\right).
\end{align*}
\end{theorem}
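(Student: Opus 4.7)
The plan is to check each of the three formulas by evaluating both sides on the basis $\{\De^u_{jk}\}_{0\le j,k\le 2}$ of $H_4(T,\mathcal{L}_u)$. By the theorem immediately preceding, this basis corresponds, under the normalization fixed in Section \ref{sec:Monodromy}, to the fundamental system $\mathbf{F}(x)$, so the geometric circuit maps $\cM_1,\cM_2,\cM_3$ are represented in this basis by the matrices $M_1,M_2,M_3$ of Theorem \ref{th:monod}. It therefore suffices to verify that the right-hand sides of the three displayed formulas, applied to each basis element $\De^u_{jk}$, reproduce the corresponding column of $M_1,M_2,M_3$.

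For $\cM_1$, I would fix $\De^u=\De^u_{jk}$ and invoke Lemma \ref{lem:orth}: the row vector $\bigl(\cI(\De^u_{jk},{\De^u_{10}}^\vee),\cI(\De^u_{jk},{\De^u_{11}}^\vee),\cI(\De^u_{jk},{\De^u_{12}}^\vee)\bigr)$ has at most one non-zero entry, namely the diagonal one when $j=1$, and analogously for the second sum built from $\De^u_{20},\De^u_{21},\De^u_{22}$. Since the matrices $\cH_{11}$ and $\cH_{12}$ are diagonal by definition, each of the two terms collapses, and a direct computation gives $\cM_1(\De^u_{0k})=\De^u_{0k}$, $\cM_1(\De^u_{1k})=\b_1^{-1}\De^u_{1k}$, and $\cM_1(\De^u_{2k})=\b_2^{-1}\De^u_{2k}$ for $k=0,1,2$, matching $M_1$ exactly. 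The verification of $\cM_2$ is the same after transposing the roles of the two indices and replacing $\b_1,\b_2$ by $\b_3,\b_4$.

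For $\cM_3$, I would recall that in the proof of the preceding theorem we established both the decomposition $\OO^u=\sum_{0\le j,k\le 2}\De^u_{jk}$ in $H_4(T,\mathcal{L}_u)$ and the key identity $\cI(\OO^u,{\De^u_{jk}}^\vee)=\cI(\De^u_{jk},{\De^u_{jk}}^\vee)$ for all $(j,k)$. Writing $\De^u=\sum c_{jk}\De^u_{jk}$ and expanding both intersection numbers by bilinearity and Lemma \ref{lem:orth}, one finds
\[
\cI(\De^u,{\OO^u}^\vee)=\sum_{j,k}c_{jk}\cI(\De^u_{jk},{\De^u_{jk}}^\vee),\qquad
\cI(\OO^u,{\OO^u}^\vee)=\sum_{j,k}\cI(\De^u_{jk},{\De^u_{jk}}^\vee).
\]
Combining this with Proposition \ref{prop:proportional}, which identifies the normalized diagonal intersection numbers with the entries $h_{jk}$ of $H$, shows that the proposed formula for $\cM_3$ acts on $\De^u=\sum c_{jk}\De^u_{jk}$ exactly as the matrix $I_9-\tfrac{1-\l}{\one H\tr\one}H\tr\one\,\one$ of Lemma \ref{lem:M3}(3) acts on the coordinate vector $(c_{jk})$, with $\l=(\b_1\b_2\b_3\b_4)/(\a_1\a_2\a_3)$ from Proposition \ref{prop:H-entry}.

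No new analytic input is required; the main obstacle is purely bookkeeping. One must track the compatibility of the three normalizations in play: the normalization of $\mathbf{F}(x)$ fixed in Section \ref{sec:Monodromy}, the identification $\mathbf{F}(x)=\mathbf{\Delta}(x)$ established in the theorem above, and the resulting coefficient-one expansion $\OO^u=\sum\De^u_{jk}$ that makes the key intersection identity hold. Once these compatibilities are in place, the intersection-form expressions for $\cM_1,\cM_2,\cM_3$ are simply the coordinate-free rewritings of the diagonal and rank-one matrix identities of Theorem \ref{th:monod}.
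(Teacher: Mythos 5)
Your proposal is correct and follows essentially the same route as the paper: verify that the intersection-form expressions, evaluated on the basis $\{\De^u_{jk}\}$, reproduce the matrices $M_1,M_2,M_3$ of Theorem \ref{th:monod}, using Lemma \ref{lem:orth} and the diagonality of the $\cH$'s for $\cM_1,\cM_2$, and the decomposition $\OO^u=\sum\De^u_{jk}$ together with the key intersection identity for $\cM_3$. The only cosmetic difference is that for $\cM_3$ you expand the intersection numbers by bilinearity and match the rank-one matrix directly, whereas the paper checks the eigenvalue on $\OO^u$ and the fixed subspace $\cI(\De^u,{\OO^u}^\vee)=0$; these are equivalent verifications.
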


\begin{proof}
Let $\cM_i'(\De^u)$ be the right hand side of $\cM_i(\De^u)$ in this theorem. 
We show that $M_i$ in Theorem \ref{th:monod} appears as   
the representation matrix of $\cM_i'$ with respect to the 
column vector 
$$\mathbf{\Delta}^u=\tr (\De_{00}^u,\De_{10}^u,\dots, \De_{22}^u).$$
By Lemma \ref{lem:orth}, it is easy to see that 
$\cM_1'(\De^u_{0j})=\De^u_{0j}$ $(j=1,2,3)$. Note that 
\begin{align*}
\cM_1'(\De^u_{1j})&=\De^u_{1j}-(1-\b_1^{-1})(\de_{0j},\de_{1j},\de_{2j})
\begin{pmatrix}
\De^u_{10}\\
\De^u_{11}\\
\De^u_{12}\\
\end{pmatrix}=\b_1^{-1}\De^u_{1j},\\
\cM_1'(\De^u_{2j})&=\De^u_{2j}-(1-\b_2^{-1})(\de_{0j},\de_{1j},\de_{2j})
\begin{pmatrix}
\De^u_{20}\\
\De^u_{21}\\
\De^u_{22}\\
\end{pmatrix}=\b_2^{-1}\De^u_{2j},
\end{align*}
where $\de_{ij}$ is Kronecker's symbol. Thus we have the representation 
matrix $M_1$ from $\cM'_1$. Similarly we have 
the representation matrix $M_2$ from $\cM'_2$.
By the definition of $\cM_3'$, we have 
$$\cM'_3(\OO^u)=\frac{\b_1\b_2\b_3\b_4}{\a_1\a_2\a_3}\OO^u, \quad 
\cM'_3(\De^u)=\De^u $$ 
for any $\De^u\in H_4(T,\mathcal{L}_u)$ satisfying 
$\cI(\De^u,{\OO^u}^\vee)=0$.  
We have only to note that $\OO^u=(1,\dots,1)\mathbf{\Delta}^u$ and 
$$\cI(\De^u,{\OO^u}^\vee)=0 
\Longleftrightarrow
(w_{00},\dots,w_{22}) H\tr (1,\dots,1)=0$$
for $\De^u=(w_{00},\dots,w_{22})\mathbf{\Delta}^u$.
\end{proof}

\end{document}